\numberwithin{equation}{section} 
\numberwithin{figure}{section} 
  \theoremstyle{plain}
  \newtheorem{thm}{Theorem}[section]
  \theoremstyle{plain}
  \newtheorem{cor}[thm]{Corollary}
  \theoremstyle{plain}
  \newtheorem{prop}[thm]{Proposition}
  \theoremstyle{Remark}
  \newtheorem{rem}[thm]{Remark}
  \theoremstyle{remark}
  \theoremstyle{plain}
  \newtheorem{lem}[thm]{Lemma}
  \newtheorem{mydef}{Definition}
\def\bfR#1{{\bf R}^#1}
\def\com#1{ \hbox{#1}}
\def\<{{\langle }}
\def\>{{\rangle }}
\def\bfR#1{{\bf R}^#1}
\def\com#1{ \quad\hbox{#1}\quad}
\def\<{{\langle }}
\def\>{{\rangle }}
\begin{document}

\title[cylindrical rotating  drops]{Equilibrium Shapes of Cylindrical Rotating Liquid Drops}

\author{Bennett Palmer and  Oscar M. Perdomo}

\date{\today}

\curraddr{O. Perdomo\\
Department of Mathematics\\
Central Connecticut State University\\
New Britain, CT 06050 USA\\
e-mail: perdomoosm@ccsu.edu
}
\curraddr{B. Palmer\\
Department of Mathematics\\
Idaho  State University\\
Pocatello, ID, 83209 USA\\
e-mail: palmbenn@isu.edu
}


\subjclass[2000]{ 53C42, 53C10}

\maketitle

\begin{abstract}

We will consider surfaces whose mean curvature at a point is a linear function of the square of the distance from that point to the vertical axis. We restrict ourselves here to surfaces which are
cylinders over a curve in a horizontal plane. We describe the moduli space of the set of solutions which includes numerous properly immersed and embedded examples. We also analyze the 
stability of these cylindrical surfaces.

\end{abstract}
\begin{figure}[ht]
\centerline{\includegraphics[width=8cm,height=4cm]{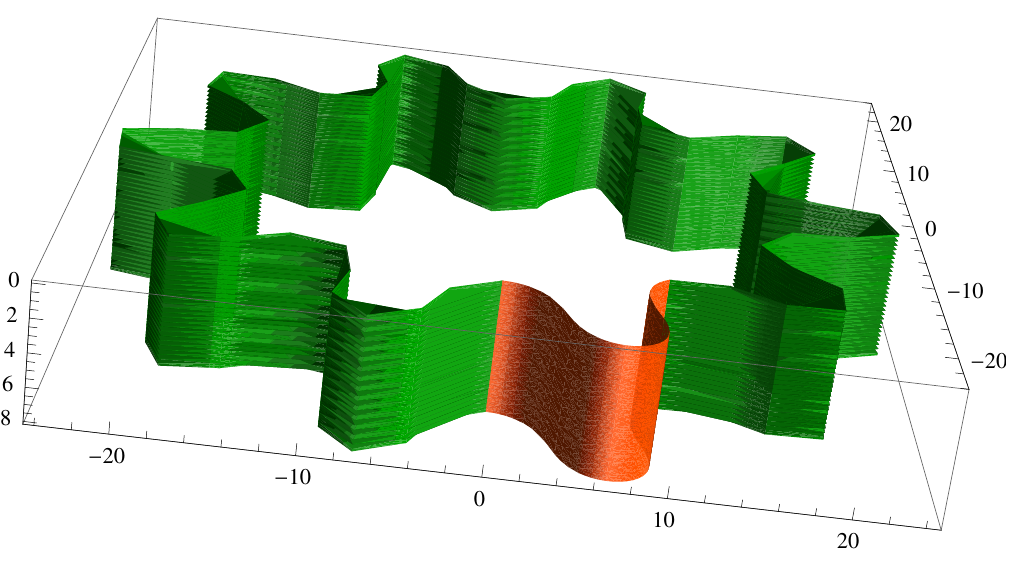}}
\end{figure}
\section{Inroduction}

In this paper we will study the equilibrium shapes of rotating liquid drops assuming which can be represented as cylinders $\alpha \times {\bf R}$ over a curve $\alpha$ in a plane orthogonal to the rotation axis. The current investigation is part off a larger project, to be continued elsewhere \cite{PP}, which will study the equilibrium drops which are invariant under a helicoidal motion. The special class of cylindrical solutions turns out to be surprisingly rich.

The equilibrium shapes of rotating drops have been widely studied , \cite{BS},\cite{Ch}, \cite{S}, \cite{MH}, and many others. These studies have primarily concentrated on the case where the surface of the drop is closed.
Here we regard the drop as the solution of either a fixed or free boundary value problem. In either case, the Euler Lagrange equation tells us that in the interior, there holds
\begin{equation}
\label{EL}2H=-\frac{a}{2}R^2+\Lambda_0\:,\end{equation}
where $a$ and $\Lambda_0$ are constants. The approach we use here and in the work to follow is based on a particular representation of the generating curve $\alpha$, called the
{\it TreadmillSled representation} which was developed by the second author \cite{P1} to study the helicoidal surfaces with constant mean curvature. This is used to obtain a complete description of the moduli space of cylindrical solutions of (\ref{EL}). Among these solutions, we find many embedded examples.

In the second part of the paper, we study the stability of the equilibria for both the free and fixed boundary problems. For the free boundary problem with the supporting surface(s) consisting of two horizontal planes, we show that, among cylindrical surfaces,  the only embedded stable critical points are particular examples of round cylinders.

\section{Preliminaries}

We will consider the equilibrium shape of a liquid drop rotating with a constant angular velocity $\Omega$ about a vertical axis. The surface of the drop, which we denote by $\Sigma$, is represented as a smooth surface. 
The bulk of the drop is assumed to be occupied by an imcompressible liquid of a constant mass density $\rho_1$ while the drop is surrounded by a fluid of constant mass density $\rho_2$.
Since the drop is liquid, its free surface energy is proportional to its surface area ${\mathcal A}$ and we take the constant of proportionality to be one. 
 The rotation contributes a second  energy term of the form $-\Omega^2 \Delta {\mathcal I}$, where $\Delta {\mathcal I}$ is  difference of moments of inertia about the vertical axis,
 $$\Delta {\mathcal I}:=(\rho_1-\rho_2) \int_UR^2\:dv\:. $$
 
  This term represents
 twice the rotational kinetic energy. 
 
 The total energy is thus of the form
 \begin{equation}
 \label{E}
 {\mathcal E}:={\mathcal A}-\frac{\Omega^2}{2}{\Delta \mathcal I}+\Lambda_0{\mathcal V}\:,
 \end{equation}
 where ${\mathcal V}$ denotes the volume of the drop and $\Lambda_0$ is a Lagrange multiplier. Let $\Delta \rho:=\rho_1-\rho_2$, then by introducing a constant  $a :=(\Delta \rho) \Omega^2$, we can write the functional in the form
  \begin{equation}
   \label{E1}
{\mathcal E}_{a, \Lambda_0}= {\mathcal A}-\frac{a}{2} \int_U R^2\:dV+\Lambda_0 {\mathcal V} \:,
 \end{equation}
where $U$ is the three dimensional region occupied by the bulk of the drop and $R:=\sqrt{x^2+y^2}$. When the drop is not embedded, we regard ${\mathcal V}$ as the signed algebraic volume.
The first variation formula  yields the Euler-Lagrange equation  (\ref{EL}); see, for example \cite{RL}. We will call surfaces which satisfy (\ref{EL}), {\it equilibrium surfaces}. 

In this paper, we restrict out attention solutions of (\ref{EL}) which are cylinders over a planar curve $\alpha$ which we assume to be parameterized by arc length.
We therefore write the cylinder $\Sigma$  as the image of a parameterization 
$$\phi(s,t)=(x(s),y(s),t)\:.$$
We will refer to the curve $\alpha$ as the {\it profile curve} of the surface.  We can define an angle $\theta$ by the equations
$$x^\prime(s)=\cos(\theta(s))\com{and} y^\prime(s)=\sin(\theta(s))\:.$$
If $\kappa(s)$ denotes the curve $\alpha$, then  $x''(s)=\kappa(s) y'(s)$ and $y''(s)=-\kappa(s)x'(s)$, the angle $\theta$ satisfies
$$\theta'(s)=-\kappa(s)\:.$$
Since $\Sigma$ is a cylinder, we have that the mean curvature $H$ satisfies
$$H(s,t) = \frac{\kappa(s)}{2}=-\frac{ \theta^\prime(s)}{2 } $$

We define $\xi_1(s)$ and $\xi_2(s)$ as

\begin{eqnarray}\label{TS coordinates}
 \xi_1(s)=x(s)\cos(\theta(s))+y(s)\sin(\theta(s)) \com{and}  \xi_2(s)=x(s)\sin(\theta(s))-y(s)\cos(\theta(s))\:.
\end{eqnarray}

The surface will satisfy the equation


$$2 H=\Lambda_0-a \frac{R^2}{2}$$

if and only if

\begin{eqnarray}\label{dtheta}
\theta^\prime(s)= -  \Lambda_0 + \frac{1}{2} a (\xi_1^2+\xi_2^2)\:,
\end{eqnarray}
holds,
since $\xi_1^2+\xi_2^2=x^2+y^2=R^2$.

We next introduce the function 
$$G(\xi_1, \xi_2):=2\xi_2+\Lambda_0(\xi_1^2+\xi_2^2)-\frac{a}{4}(\xi_1^2+\xi_2^2)^2\:.$$

From the definition of $\xi_1$,  $\xi_2$ and $\theta$, we  obtain 
\begin{equation}
\label{dz}
 \xi_1^\prime=1+\kappa  \xi_2\:,\quad  \xi_2^\prime=-\kappa \xi_1\:.
 \end{equation}
  Using Equation (\ref{dtheta}) we conclude that $\xi_1$ and $\xi_2$ must satisfy

\begin{eqnarray} \label{the ode}
\xi_1^\prime &=&  1+ \xi_2\, \Lambda_0 - \frac{1}{2} \, \xi_2 \,  a\,  (\xi_1^2+\xi_2^2)=\frac{1}{2}\frac{\partial G}{\partial \xi_2} \:,   \\
\xi_2^\prime &=&  -\xi_1\,  \Lambda_0  +  \frac{1}{2}  \, a\, \xi_1\,  (\xi_1^2+\xi_2^2))=-\frac{1}{2}\frac{\partial G}{\partial \xi_1} \nonumber\:.
\end{eqnarray}
Note that (\ref{the ode}) are just Hamilton's equation for the Hamiltonian $G/2$.
We have shown:
\begin{prop} \label{G must be const} The curve $\alpha$ is the generating curve of a cylindrical surface satisfying $2H=\Lambda_0-aR^2/2$ if and only if the curve $\beta(s):=(\xi_1(s), \xi_2(s))$
satisfies $G(\xi_1, \xi_2)\equiv$ constant.
\end{prop}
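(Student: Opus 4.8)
The plan is to prove the two implications separately, both leveraging the observation recorded just above the statement that the system (\ref{the ode}) is Hamilton's system for the Hamiltonian $G/2$, so that $G$ itself is, up to a factor, a conserved quantity of that flow.

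For the implication from ``equilibrium'' to ``$G$ constant'' I would argue as follows. Assume $\alpha$ generates a cylinder with $2H=\Lambda_0-aR^2/2$; by the discussion in the Preliminaries this is the same as (\ref{dtheta}). Since $\theta'=-\kappa$, and since the kinematic identities (\ref{dz}) hold for \emph{every} unit-speed planar curve, substituting $\kappa=\Lambda_0-\tfrac{a}{2}R^2$ into (\ref{dz}) gives precisely (\ref{the ode}). As the value of a Hamiltonian is constant along its flow, $s\mapsto G(\xi_1(s),\xi_2(s))$ has vanishing derivative and is therefore constant. This direction is essentially the chain of equalities already displayed before the Proposition.

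For the converse I would start from the hypothesis that $G(\xi_1(s),\xi_2(s))$ is constant and differentiate in $s$, using only the always-valid identities (\ref{dz}):
\[
0=\frac{d}{ds}\,G=\frac{\partial G}{\partial \xi_1}\,(1+\kappa\xi_2)+\frac{\partial G}{\partial \xi_2}\,(-\kappa\xi_1).
\]
Computing $\partial G/\partial\xi_1=2\xi_1\bigl(\Lambda_0-\tfrac{a}{2}R^2\bigr)$ and $\partial G/\partial\xi_2=2+2\xi_2\bigl(\Lambda_0-\tfrac{a}{2}R^2\bigr)$ and substituting, the two terms containing the factor $\kappa\xi_1\xi_2$ cancel, leaving $0=2\xi_1\bigl(\Lambda_0-\tfrac{a}{2}R^2-\kappa\bigr)$. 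Hence $\kappa=\Lambda_0-\tfrac{a}{2}R^2$ wherever $\xi_1\neq0$, and this is equivalent to (\ref{dtheta}), i.e. to $2H=\Lambda_0-aR^2/2$.

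The only delicate point is to promote this to an identity along the whole curve. Where $\{\xi_1=0\}$ has empty interior, continuity of $\kappa$ and $R$ finishes the job. On an interval where $\xi_1\equiv0$, the identities (\ref{dz}) force $\xi_2$ (hence $R$) and $\kappa=-1/\xi_2$ to be constant, so that portion of $\alpha$ is an arc of a circle centered on the rotation axis; such a piece is matched to the rest of $\alpha$ by continuity of $\kappa$ (which then pins down the constant), and the globally coaxial circular cylinder is the single degenerate case where the relation $2H=\Lambda_0-aR^2/2$ has to be read off directly. I expect this bookkeeping around $\{\xi_1=0\}$ to be the main obstacle; the partial-derivative computation and the cancellation above are routine.
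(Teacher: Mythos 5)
Your forward implication is exactly the paper's argument: the authors derive (\ref{dtheta}) from the equilibrium equation, substitute into (\ref{dz}) to get the Hamiltonian system (\ref{the ode}), and conclude $G\equiv$ constant; the displayed computation before the Proposition \emph{is} their entire proof, announced by ``We have shown.'' Where you genuinely diverge is the converse, which the paper does not address at all: constancy of $G$ only says the velocity $(\xi_1',\xi_2')$ is tangent to a level set of $G$, not that it equals the Hamiltonian vector field, and your differentiation
$\tfrac{d}{ds}G=2\xi_1\bigl(\Lambda_0-\tfrac{a}{2}R^2-\kappa\bigr)$
is the right way to close that gap. Your computation of $G_{\xi_1}$, $G_{\xi_2}$ and the cancellation check out. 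More importantly, the delicate point you isolate is a real one and not mere bookkeeping: a circle of \emph{arbitrary} radius centered on the axis has $\beta(s)\equiv(0,\xi_2)$ constant, hence trivially constant $G$, yet it is an equilibrium only for the radii singled out in Proposition \ref{round}; so the literal ``if'' direction of the Proposition fails precisely in that degenerate case, which your argument correctly quarantines (and your matching of a circular sub-arc to the adjacent set $\{\xi_1\neq0\}$ via continuity of $\kappa$ and $R$ handles every non-globally-circular curve). In short, your proof is correct, coincides with the paper on the direction the paper actually proves, and supplies the missing converse together with the one caveat under which the stated equivalence must be read with Proposition \ref{round} in hand.
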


The image of the generating curve $\alpha$ under the map $(\xi_1,\xi_2):\alpha \rightarrow \{G=C\}$ will be called the {\it TreadmillSled} of $\alpha$.
In most cases, e.g. when $\alpha$ is a closed curve, the map $(\xi_1,\xi_2)$ will be surjective but we will give examples where this is not the case.
 The name arises from the fact that
if for any $s$ fixed, if we consider the rigid motion $T_s$ from $\bfR{2}$ to $\bfR{2}$ that sends the point $\alpha(s)$ to $(0,0)$ and the ray $\{\alpha(s)+t\alpha^\prime(s): t>0\}$ to the ray $\{(t,0):t>0\}$, then this transformation $T_s$ will send the origin to $(-\xi_1(s),\xi_2(s)))$. Using this fact we can view the curve $\beta$ as generated by the trace of the origin of the curve $\alpha$ when $\alpha$ moves on a treadmill placed at the origin and oriented along the $x$-axis. For more details look at the papers \cite{P3} and \cite{P1}. A picture showing this relation between the two curves is shown in Figure \ref{TSofcyl}



\begin{rem}\label{xi1positive} The level sets of $G$ are symmetric with the $\xi_2$-axis, therefore in order to understand the level set of $G$, it is enough to understand those points in the level set with $\xi_1\ge 0$\:.

\end{rem}
\section{Explicit construction of cylindrical drops.}
In order to study the level sets of the function $G$ we will replace the variables  $\xi_1$ and $\xi_2$ by the  variables $r$ and $\xi_2$ where

$$ r=\xi_1^2+\xi_2^2 \:.$$

Under this change, we get that the equation $G=C$ reduces to

$$ 2 \xi_2  + \Lambda_0 r -\frac{a}{4} \, r^2= C \:.$$

Therefore,

$$ \xi_2=\frac{1}{8} (4 C + r (-4 \Lambda_0 + a r))\:. $$

 By remark (\ref{xi1positive}), it is enough to consider those points with $\xi_1\ge0$. Since $\xi_1=\sqrt{r-\xi_2^2}$, we obtain 

 $$\xi_1=\sqrt{r-\frac{1}{64} (4 c + r (-4 b + a r))^2  }=:\frac{1}{8}\sqrt{q(r,a,c, \Lambda_0)}\:,$$
where

\begin{eqnarray}
q=q(r,a,C, \Lambda_0)=-16 C^2 + 64 r + 32 C r - 16 \Lambda_0^2  r^2 - 8 a C r^2 + 8 a \Lambda_0 r^3 - a^2 r^4\:.
\end{eqnarray}

\begin{rem}
Since $q$ is a polynomial in $r$ of degree 4  with negative leading coefficient when $a\ne0$ and $q$ is a polynomial of degree two when $a=0$, we get that the values of $r$ for which $q$ is positive are bounded. Since $r=\xi_1^2+\xi_2^2=x^2+y^2$,  we conclude that all the profile curves of the cylindrical solutions to the rotating drop equation are bounded.
\end{rem}

\begin{mydef}\label{def of rho} Let $r_1$ and $r_2$ be two values that satisfies $q(r_1)=q(r_2)=0$ and $q(r)>0$ for all $r\in [r_1,r_2]$. We define  $\rho:[r_1,r_2]\longrightarrow \bfR{2}$ by

$$ \rho(r)= \big(\, \frac{1}{8}\sqrt{q(r,a,C, \Lambda_0)} \, ,\, \frac{1}{8} (4 C + r (-4 \Lambda_0 + a r))  \, \big) \:.$$

\end{mydef}

\begin{rem} \label{prop of rho} As pointed out previously, all the level set of the function $G$ are bounded. Moreover, they are closed curves. We have that  the map $\rho$ parametrizes half of a connected component of the level set $G=C$.
\end{rem}

We will prove that, except for a few exceptions,  the connected components of the level set $G(\xi_1,\xi_2)=C$ are regular closed curves. When this happens, we define a {\it fundamental piece} of the profile curve of the cylindrical rotating drop as a connected part of the profile curve such that the parametrized curve $(\xi_1(s),\xi_2(s))$ given by equations (\ref{TS coordinates}) parametrizes exactly one of these closed regular curves that form the  level set of $G=C$.

\begin{rem}
From the definition of TreadmillSled given in \cite{P1}, we obtain that the profile curves of  the  equilibrium cylindrical rotational drops are characterized by the property that their TreadmillSleds are contained in the level sets of $G$. In other words, using the notation of \cite{P1}, we have that $TS(\alpha)=\beta$ where $\beta$ is a parametrization of a connected component of the level set of $G=C$ and $\alpha$ is the profile curve of the cylindrical rotating drop. 
We will see that for a few exceptional examples, the profile curve is a bounded complete curve that winds infinity many times around the origin and whose  limit  cycle is a circle. For the non-exceptional examples, we can define a fundamental piece of the profile curve, and we can define an initial point $p_1$ and a final point $p_2$ of this fundamental piece. Furthermore, we have that the whole profile curve is the union of rotations of fundamental pieces and we also have that if  $R_1=\hbox{min}\{ |m| : m\in TS(\alpha)\}$ and $R_2=\hbox{max}\{ |m| : m\in TS(\alpha)\}$ and if $\Delta \tilde{\theta}$ is the variation of the angle between $\overrightarrow{0p_1}$ and $\overrightarrow{0p_2}$ then, the profile curve is properly immersed if $\Delta\tilde{\theta}/\pi$ is a rational number, otherwise the profile curve is dense in the set
 $\{(x,y)\in \bfR{2}: R_1\le |(x,y)|\le R_2\}$

\end{rem}

\begin{figure}[ht]
\centerline{\includegraphics[width=5cm,height=5cm]{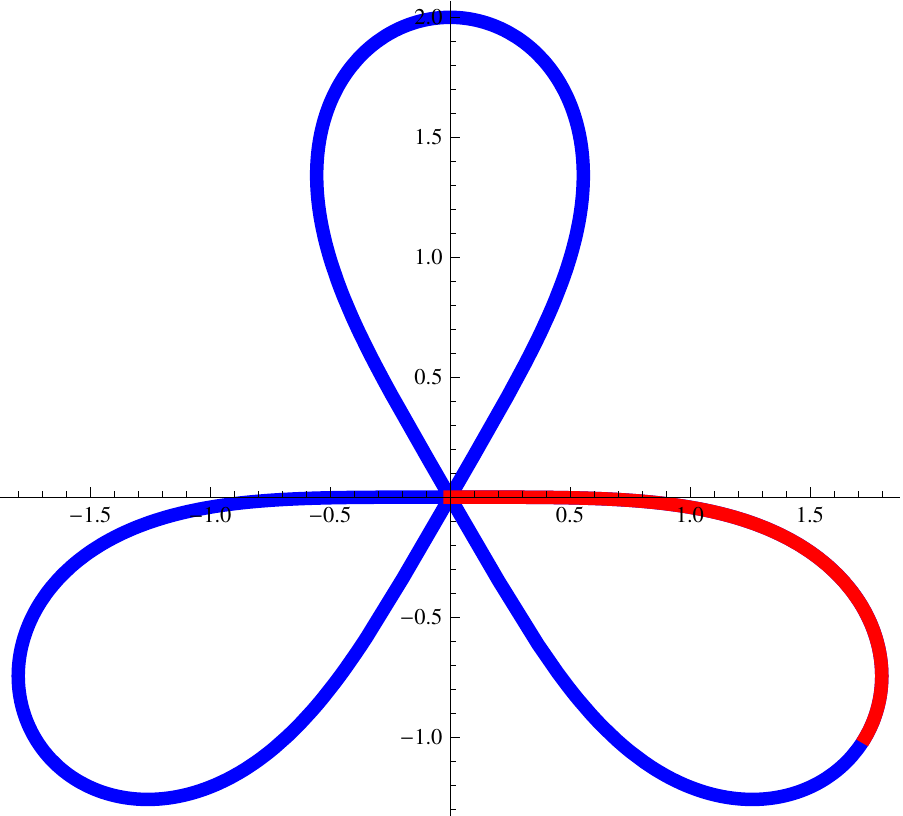}\hskip.3cm \includegraphics[width=5cm,height=5cm]{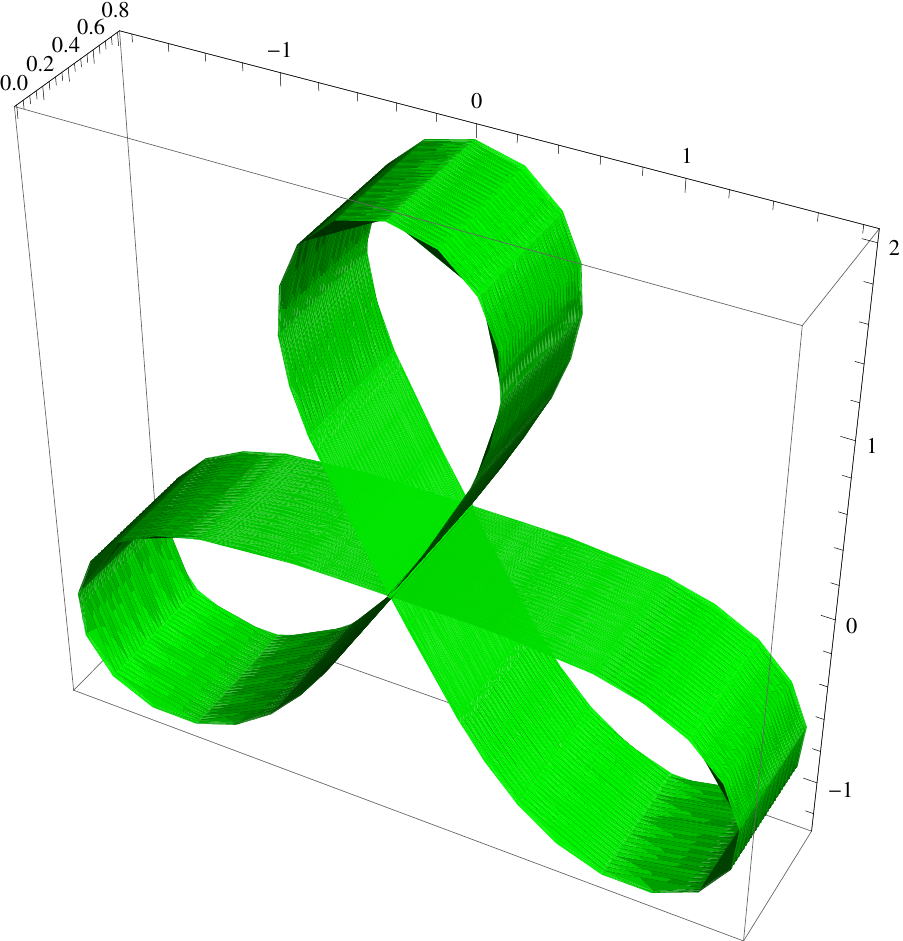}}
\label{surface c e 0}
\caption{Profile curve of  a cylindrical drop with $\Lambda_0=0$, $a=1$, $c=0$. The first picture shows the whole profile curve and the second picture shows part of the cylinder.}
\end{figure}

We now compute the variation $\Delta \tilde{\theta}$ in terms of the parameter $r$. We assume that $\alpha(s)$ is the profile curve of a cylindrical rotational drop. Recall that we are assuming that $s$ is the arc-length parameter for the curve $\alpha$. If $\beta(s)=(\xi_1(s),\xi_2(s))$, then we have that for some function $s=\sigma(r)$, $\beta(\sigma(r))=\rho(r)$. By the chain rule we have that

\begin{eqnarray}\label{ds}
|\frac{ds}{dr}|=|\frac{d\sigma}{dr}|=\frac{|\rho^\prime(r)|}{|\beta^\prime(s)|}=\sqrt{\frac{4|\rho^\prime(r)|^2}{(G_{\xi_1}(\beta(s))^2+(G_{\xi_1}(\beta(s))^2}}
=\frac{4}{\sqrt{q(r,a,C, \Lambda_0)}}
\end{eqnarray}
and we also have that $\tilde{\theta}$ denotes the angle of the profile curve when it is written in polar coordinates, then $\tilde{\theta}^\prime(s)= \xi_2(s)/r$ and

\begin{eqnarray}\label{dttheta}
\frac{d\tilde{\theta}}{dr} = \frac{d\tilde{\theta}}{ds}\, \frac{ds}{dr}=  \frac{4C+a r^2-4 \Lambda_0 r}{2 r\sqrt{ q(r,a,C, \Lambda_0)}}\:.
\end{eqnarray}

Since  the map $\rho:[r_1,r_2]\longrightarrow \bfR{{2}}$ parametrizes half of the TreadmillSled of the the fundamental piece of the profile curve, we obtain that the the following expression for $\Delta {\tilde \theta}$,

\begin{eqnarray}\label{change}
\Delta {\tilde \theta} = \Delta{\tilde \theta}(C,a,r_1,r_2)  =\, \int_{r_1}^{r_2} \frac{(4C+ a r^2-4 \Lambda_0 r) }{r \sqrt{q(r,a,C, \Lambda_0)}}\, dr\:.
\end{eqnarray}

Figure \ref{Deltatildath} illustrates the meaning of the function $\Delta {\tilde \theta}$.

\begin{figure}[ht]
\centerline{\includegraphics[width=6cm,height=6cm]{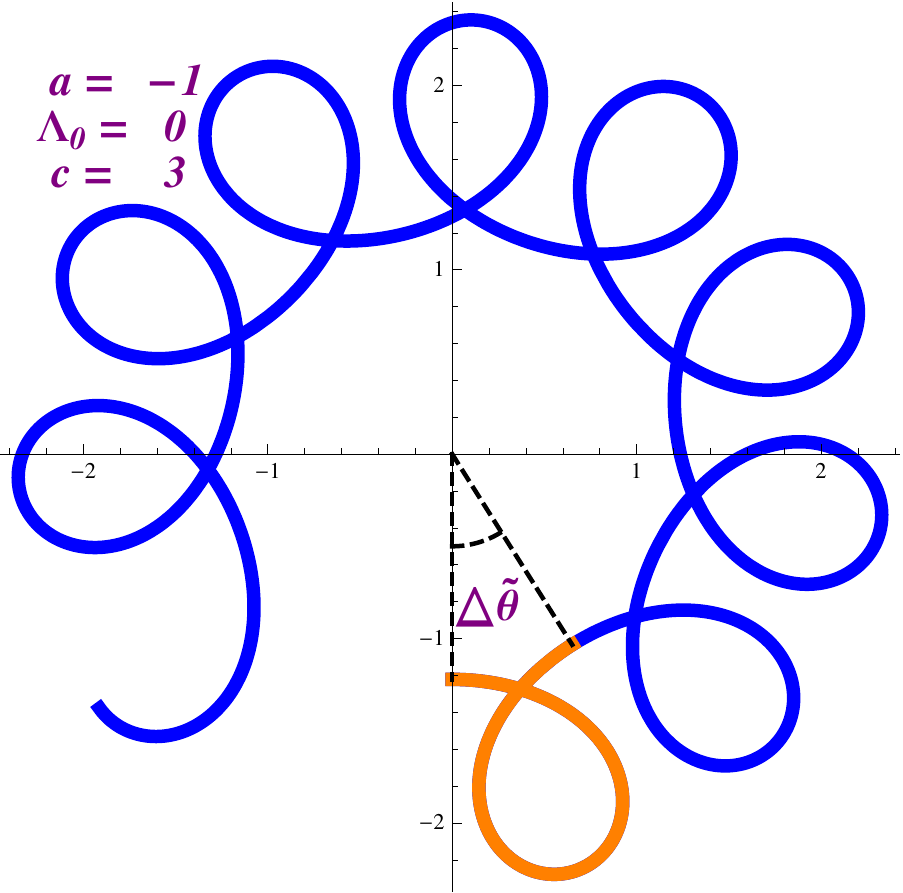} \includegraphics[width=6cm,height=6cm]{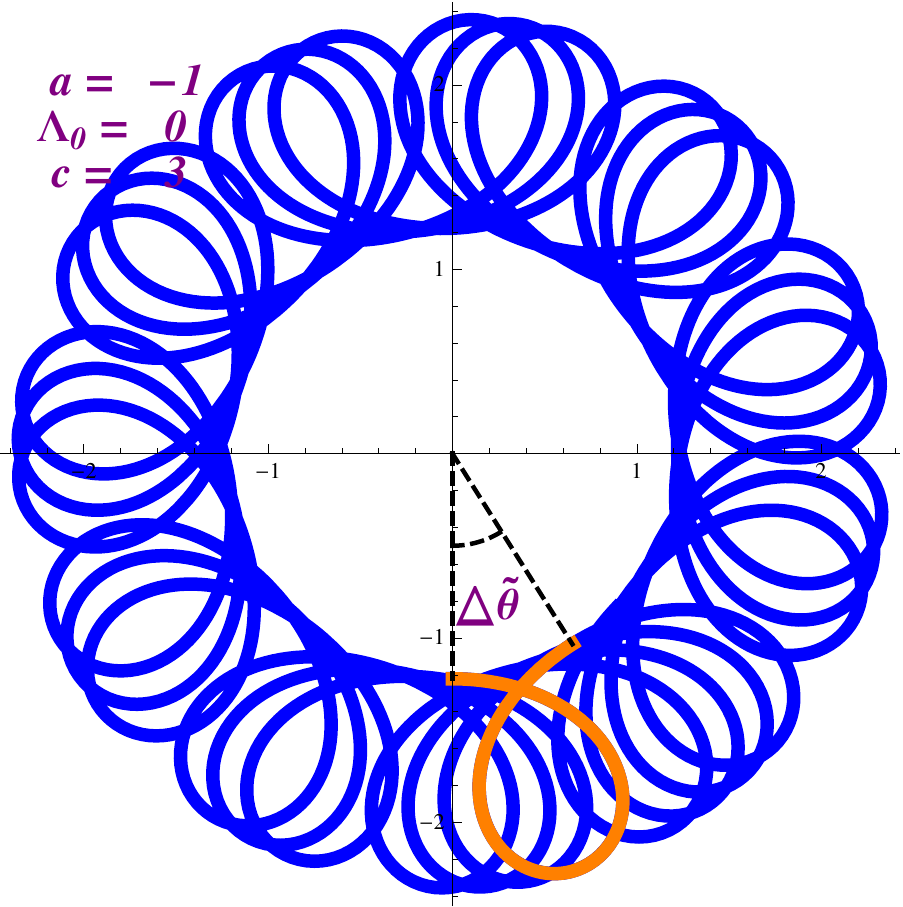}}
\caption{The profile curve is the union of rotations of one fundamental piece. The function $\Delta\tilde{\theta}$ measures the angle between the starting and final point of the fundamental piece. The graph on the right shows the part of the profile curve consisting of 70 fundamental pieces. Most likely this fundamental piece will never be a closed curve and therefore this profile curve will be dense in the region bounded by two circles.}
\label{Deltatildath}
\end{figure}

\begin{rem}
Under rescaling of an equilibrium drop $\Sigma \mapsto \lambda \Sigma$, $\lambda \in {\bf R}^*$, we have $H\mapsto \lambda^{-1}H$ and so $\Lambda_0\mapsto \lambda^{-1}\Lambda_0$ and $a\mapsto \lambda^{-4}a$. In particular, the set of equilibria is invariant under rescaling.
Changes in orientation induced by, for example, changes in the orientation  of the profile curve, will reverse the signs of  $\Lambda_0$, $a$ and $H$.  With these two observations in mind, we have that in order to consider all the cylindrical rotational drops, up to reparametrizations, rigid motions and dilations, it is enough to consider two cases: Case I, $\Lambda_0=0$ and $a=-1$ and Case II, $\Lambda_0=1$ and $a$ any real number.

\end{rem}

\subsection{Case I: $\Lambda_0=0$ and $a=-1$}

In this case, we let $q(r, C):=q(r,-1,C,1)$, i.e.

$$q=q(r,C)=-16 C^2 + 64 r + 8 C r^2 - r^4 $$

Recall that we are interested in finding two positive consecutive roots of the polynomial $q$. Notice that when $C$ is a  large negative number, the polynomial $q$ has no real roots and if $C$ is a  large positive number then the polynomial $q$ has more than one real root. In every case $q(0,C)=-16 C^2\le0$ and the limit when $r\to \inf$  of $q(r)$ is $-\infty$. The following values provide the possible values of $C$ for which the polynomial $q(r ,C)$ has two positive roots.

\begin{lem}\label{lem sect 3.1}
For any $C>C_0=-\frac{3}{2^{\frac{2}{3}}}$, the polynomial $q(r,C)$ has exactly two positive real roots. 
When $C=C_0$, $\sqrt[3]{4}$ is the only real root of $q(r,C)$  and when $C<C_0$, $q(r,C)$ has not real roots.  
\end{lem}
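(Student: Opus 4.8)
The plan is to avoid treating $q(\cdot,C)$ as a quartic in $r$ and instead read the equation $q(r,C)=0$ as a \emph{quadratic in $C$}: it becomes $16C^2-8r^2C+(r^4-64r)=0$, whose discriminant in $C$ is $64r^4-64(r^4-64r)=4096\,r$. Since this is negative for $r<0$, there is no pair $(r,C)$ with $r<0$ solving $q(r,C)=0$; hence for every fixed $C$ the polynomial $q(\cdot,C)$ has no negative root, and because $q(0,C)=-16C^2$ the value $r=0$ is a root only when $C=0$. Consequently, for $C\neq 0$ all real roots of $q(\cdot,C)$ are strictly positive, and it suffices to count those.

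Solving the quadratic gives, for $r\ge 0$, the factorization
\begin{equation*}
q(r,C)=-16\bigl(C-f_+(r)\bigr)\bigl(C-f_-(r)\bigr),\qquad f_\pm(r):=\frac{r^2}{4}\pm 2\sqrt{r},
\end{equation*}
which is checked at once from $f_++f_-=r^2/2$ and $f_+f_-=r^4/16-4r$. So the positive roots of $q(\cdot,C)$ are precisely the $r>0$ with $f_+(r)=C$ together with the $r>0$ with $f_-(r)=C$, and since $f_+(r)-f_-(r)=4\sqrt{r}>0$ for $r>0$ no positive number is a root of both factors. The lemma is thereby reduced to the elementary calculus of the two explicit functions $f_\pm$ on $(0,\infty)$.

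The remaining steps are routine. First, $f_+$ is a sum of increasing functions, hence strictly increasing from $f_+(0)=0$ to $+\infty$; so $f_+(r)=C$ has a (unique) positive solution exactly when $C>0$. Second, $f_-'(r)=\frac{r}{2}-\frac{1}{\sqrt{r}}=\frac{r^{3/2}-2}{2\sqrt{r}}$ vanishes on $(0,\infty)$ only at $r=\sqrt[3]{4}$, so $f_-$ decreases on $(0,\sqrt[3]{4})$ and increases on $(\sqrt[3]{4},\infty)$, with $f_-(0)=0$, $f_-\to+\infty$, and minimum value $f_-(\sqrt[3]{4})=2^{-2/3}-2^{4/3}=-3\cdot 2^{-2/3}=C_0$; therefore $f_-(r)=C$ has no positive solution for $C<C_0$, the single solution $r=\sqrt[3]{4}$ for $C=C_0$, two positive solutions for $C_0<C<0$, and one positive solution for $C\ge 0$. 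Assembling: for $C<C_0$ neither factor has a positive root and (by the first paragraph) $q(\cdot,C)$ has no real root; for $C=C_0$ the only positive root is $r=\sqrt[3]{4}$, and since it is a critical point of $f_-$ it is a double root of $q$, so $\sqrt[3]{4}$ is the unique real root; for $C_0<C<0$ the factor with $f_-$ gives two distinct positive roots and the other gives none; and for $C>0$ each factor gives one positive root and they are distinct, so in both of the last two cases there are exactly two positive roots. (At the single value $C=0$ one has $q(r,0)=r(64-r^3)$, with roots $r=0$ and $r=4$.)

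I do not anticipate a substantive obstacle; the proof is a clean algebraic reduction followed by single-variable calculus. The only points demanding care are the multiplicity bookkeeping at the tangency $C=C_0$, namely recognising that a critical value of $f_-$ yields a double root of the quartic $q$ and hence accounts for two of its four roots, and keeping the sign conventions consistent when passing between $q$ and the factorization through $f_\pm$; the degenerate behaviour at $C=0$ (a positive root sliding down to the origin) should also be noted for completeness.
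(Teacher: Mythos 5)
Your proof is correct, and it takes a genuinely different route from the paper's. The paper treats $q(\cdot,C)$ as a quartic in $r$ and locates the degenerate parameters by solving the system $q=q'=0$ (quoting a Groebner basis computation), finding the single solution $(r,C)=(2^{2/3},C_0)$; it then argues by continuity that the number of positive roots cannot change except across that value of $C$, anchoring the count with the behaviour of $q$ for large positive and large negative $C$. You instead read $q=0$ as a quadratic in $C$ and factor $q=-16\bigl(C-f_+(r)\bigr)\bigl(C-f_-(r)\bigr)$ with $f_\pm(r)=r^2/4\pm 2\sqrt{r}$, reducing the whole lemma to the elementary graphs of $f_\pm$ on $(0,\infty)$. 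Your computations check out: the discriminant $4096\,r$ correctly rules out negative roots, $f_+$ increases from $0$ to $\infty$, and $f_-$ has its unique minimum $C_0=-3\cdot 2^{-2/3}$ at $r=2^{2/3}$, so the root counts follow. What your approach buys is a self-contained argument (no computer algebra, no unverified continuity step) together with the explicit location of the roots as preimages under $f_\pm$; the one limitation is that solving for $C$ exploits the fact that $q$ is quadratic in $C$, so it does not transplant as directly to the Case II analysis, where the paper reuses its discriminant-plus-continuity scheme. A genuine dividend of your method is that it exposes an endpoint case the lemma's statement glosses over: at $C=0$ one has $q(r,0)=r(64-r^3)$, with only one \emph{strictly} positive root $r=4$, the second root having degenerated to $r=0$; this is harmless for the paper's purposes (the interval $[r_1,r_2]=[0,4]$ on which $q\ge 0$ still exists), but it is worth recording $C=0$ as the exceptional value, and your proof is the one that detects it.
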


\begin{proof} 

We have that $q^\prime(r,C)=64+16 Cr-4r^3$. A direct computation shows that the only real roots of the system

$$q(r,C)=0\com{and} q^\prime(r,C)=0 $$

are  $C=-\frac{3}{2^{\frac{2}{3}}}$ and $r=2^\frac{2}{3}$. This  also follows from the fact that a Groebner basis of the polynomials $\{q,q^\prime\}$ is the set $\{27+4 C^3,-4 C^2+9 r\}$.  By continuity,  we conclude that the lemma holds. Notice that if for some value of $C$ the polynomial $q(r,C)$ has more than 2 positive roots, then there should exist another solution of the equations $\{q(r,C)=0,\, q^\prime(r,C)=0\} $.
\end{proof}

Now we will compute the limit of $\Delta {\tilde \theta}$ when $C$ goes to $C_0$.
We will use the following lemma from \cite{P2}

\begin{lem}\label{lemma 1} Let $f(c,r)$ and $g(c,r)$ be smooth functions such that $g(c_0,r_0)= \frac{\partial g}{\partial r} (c_0,r_0)=0 $ and
$\frac{\partial^2 g}{\partial^2 r} (c_0,r_0)= -2 A$ where $A>0$.  If $\{c_n\}$, $\{u_n\}$ and $\{v_n\}$ are sequence such that   $c_n$ converges to $c_0$, $u_n$ and $v_n$ converges to $r_0$ with $u_n<r_0<v_n$, and $g(u_n) =g(v_n)=0$  and  $g(r)>0$ for all $r\in(u_n,v_n)$, then

$$\int_{u_n}^{v_n}\frac{f(c_n,r)\, dr}{\sqrt{g(c_n,r)}} \longrightarrow  \, f(c_0,r_0)\, \frac{\pi}{\sqrt{A}} \com{as}  n\longrightarrow \infty\:. $$

\end{lem}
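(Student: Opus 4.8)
The plan is to localize the integral near $r_0$ by pulling out of $g(c_n,\cdot)$ the exact square-root behaviour at the two moving endpoints, leaving a factor that converges uniformly, and then to rescale the $n$-dependent interval $[u_n,v_n]$ to a fixed one by a trigonometric substitution that removes the singularity entirely.

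The key algebraic step is a pointwise factorization of $g(c_n,\cdot)$. Since $g(c_n,u_n)=g(c_n,v_n)=0$ and $g(c_n,\cdot)$ is $C^2$, the error formula for linear interpolation at the nodes $u_n,v_n$ (the interpolant is the zero polynomial) gives, for each $r\in(u_n,v_n)$, a point $\xi=\xi_{n,r}\in(u_n,v_n)$ with $g(c_n,r)=\tfrac12\,\partial_r^2 g(c_n,\xi)\,(r-u_n)(r-v_n)$. Hence if we set $\psi_n(r):=g(c_n,r)\big/\big((r-u_n)(v_n-r)\big)$ for $r\in(u_n,v_n)$, then $\psi_n(r)=-\tfrac12\,\partial_r^2 g(c_n,\xi_{n,r})$, and $\psi_n>0$ on $(u_n,v_n)$ because $g>0$ there. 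Since $u_n,v_n\to r_0$ and $c_n\to c_0$, the pairs $(c_n,\xi_{n,r})$ all lie in an arbitrarily small neighbourhood of $(c_0,r_0)$ once $n$ is large; continuity of $\partial_r^2 g$ together with $\partial_r^2 g(c_0,r_0)=-2A$ then forces $\psi_n\to A$ uniformly on $(u_n,v_n)$, and $\psi_n$ extends continuously to $[u_n,v_n]$ (boundary values $-\tfrac12\partial_r^2 g(c_n,u_n)$, $-\tfrac12\partial_r^2 g(c_n,v_n)$) with the same uniform bound. In particular $\psi_n$ is bounded above and below by positive constants for all large $n$.

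Next I would substitute $r=m_n+\ell_n\sin\phi$ with $m_n=(u_n+v_n)/2$, $\ell_n=(v_n-u_n)/2$ and $\phi\in[-\pi/2,\pi/2]$, using $(r-u_n)(v_n-r)=\ell_n^2\cos^2\phi$ and $dr=\ell_n\cos\phi\,d\phi$, so that $dr/\sqrt{(r-u_n)(v_n-r)}=d\phi$. This gives
\begin{align*}
\int_{u_n}^{v_n}\frac{f(c_n,r)\,dr}{\sqrt{g(c_n,r)}}
&=\int_{u_n}^{v_n}\frac{f(c_n,r)\,dr}{\sqrt{\psi_n(r)}\,\sqrt{(r-u_n)(v_n-r)}}\\
&=\int_{-\pi/2}^{\pi/2}\frac{f\big(c_n,\,m_n+\ell_n\sin\phi\big)}{\sqrt{\psi_n\big(m_n+\ell_n\sin\phi\big)}}\,d\phi .
\end{align*}
Because $m_n+\ell_n\sin\phi\to r_0$ uniformly in $\phi$ and $c_n\to c_0$, continuity of $f$ gives $f(c_n,m_n+\ell_n\sin\phi)\to f(c_0,r_0)$ uniformly in $\phi$, while the uniform convergence $\psi_n\to A$ and the uniform positive lower bound on $\psi_n$ give $\sqrt{\psi_n(m_n+\ell_n\sin\phi)}\to\sqrt A$ uniformly. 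Thus the integrand converges uniformly on the fixed interval $[-\pi/2,\pi/2]$ to the constant $f(c_0,r_0)/\sqrt A$, and passing to the limit under the integral yields $\int_{-\pi/2}^{\pi/2} f(c_0,r_0)/\sqrt A\,d\phi=\pi f(c_0,r_0)/\sqrt A$, which is the assertion. (Alternatively, one may keep the affine substitution $r=u_n+(v_n-u_n)t$ and invoke dominated convergence with the fixed integrable dominating function $t\mapsto C/\sqrt{t(1-t)}$.)

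The only genuine difficulty is that the integrand $1/\sqrt{g(c_n,r)}$ has (integrable) singularities at the two endpoints and that \emph{both} the integrand and the domain of integration move with $n$; the interpolation-error factorization is precisely what isolates the singularity into the explicit factor $1/\sqrt{(r-u_n)(v_n-r)}$ with a cofactor $1/\sqrt{\psi_n}$ that is uniformly close to $1/\sqrt A$, after which the trigonometric substitution reduces everything to uniform convergence of continuous functions on a fixed compact interval. The one technical point worth stating carefully is the continuous extension of $\psi_n$ to the closed interval, so that the substituted integrand is genuinely continuous on $[-\pi/2,\pi/2]$.
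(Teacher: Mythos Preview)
The paper does not prove this lemma; it is quoted from \cite{P2} and used as a black box, so there is no argument in the paper to compare yours against. Your proof is correct and self-contained. The interpolation-error identity $g(c_n,r)=\tfrac12\,\partial_r^2 g(c_n,\xi_{n,r})\,(r-u_n)(r-v_n)$ cleanly isolates the endpoint singularities into the explicit factor $1/\sqrt{(r-u_n)(v_n-r)}$, the cofactor $1/\sqrt{\psi_n}$ converges uniformly to $1/\sqrt{A}$ by continuity of $\partial_r^2 g$ at $(c_0,r_0)$, and the sine substitution reduces everything to uniform convergence of continuous integrands on the fixed interval $[-\pi/2,\pi/2]$, exactly as you describe.

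One small slip, which does not affect the argument: your parenthetical claim that the continuous extension of $\psi_n$ takes the boundary values $-\tfrac12\partial_r^2 g(c_n,u_n)$ and $-\tfrac12\partial_r^2 g(c_n,v_n)$ is not quite right. Directly, $\lim_{r\to u_n^+}\psi_n(r)=\partial_r g(c_n,u_n)/(v_n-u_n)$, and applying Taylor's theorem to $g(c_n,\cdot)$ between $u_n$ and $v_n$ shows this equals $-\tfrac12\partial_r^2 g(c_n,\eta)$ for \emph{some} $\eta\in(u_n,v_n)$, but not in general for $\eta=u_n$. This is harmless: the pointwise containment $\psi_n(r)\in\{-\tfrac12\partial_r^2 g(c_n,\xi):\xi\in[u_n,v_n]\}$ on the open interval $(u_n,v_n)$ already forces $\psi_n\to A$ uniformly there, and since the endpoints contribute nothing to the integral you can pass to the limit in the substituted integral without ever knowing the exact boundary values.
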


Notice that cylindrical rotating drops are defined when $C$ takes value from  $C_0=-\frac{3}{\sqrt[3]{4}}$ to $\infty$. When $C=C_0$ the only root of the polynomial $q$ is $r_0=\sqrt[3]{4}$. If we apply Lemma \ref{lemma 1} with $f(r,c)=-\frac{(4C-r^2)}{r }$ and $g(r,c)=q(r,C)$, we obtained that

\begin{eqnarray}\label{lowerbound}
\lim_{C\to C_0^+} \Delta {\tilde \theta}=- \frac{2 \pi}{\sqrt{3}}
\end{eqnarray}

Figure \ref{functiondth} shows the graph of the function $\Delta {\tilde \theta}$.  For every value $C$ between $-\frac{3}{\sqrt[3]{4}}$ and  $8$, the function gives the variation the angle  between $\overrightarrow{0p_1}$ and $\overrightarrow{0p_2}$ where $p_1$ and $p_2$ are the initial and final points of a fundamental piece

\begin{figure}[ht]
\centerline{\includegraphics[width=12cm,height=6cm]{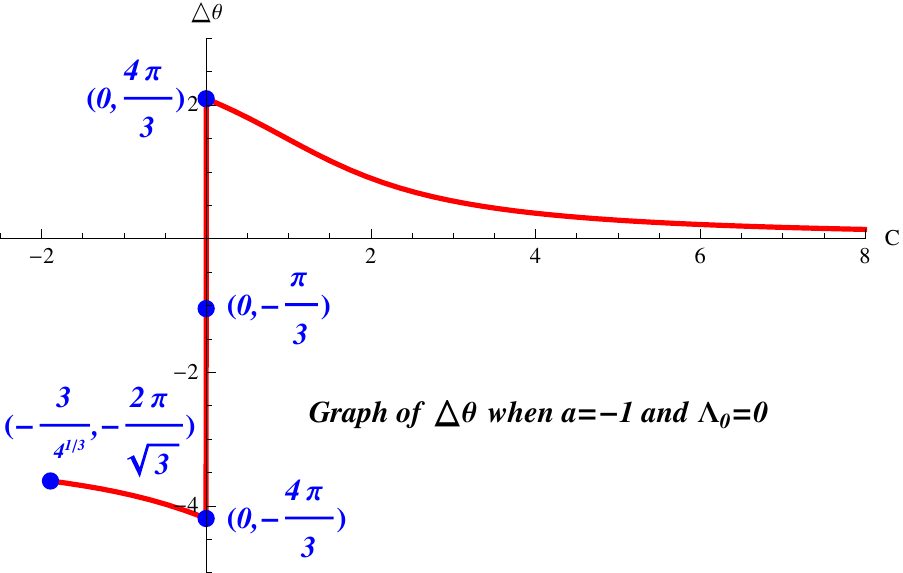}}
\caption{For $a=-1$ and $\Lambda_0=0$ the domain of the function $\Delta {\tilde \theta}$ is $(-\frac{3}{\sqrt[3]{4}},\infty)$, here we show the graph for values of $C$ between $-\frac{3}{\sqrt[3]{4}}$ and $8$.}
\label{functiondth}
\end{figure}

\begin{rem} \label{embedded}

Recall that whenever  $\Delta  {\tilde \theta}=n \pi/m$ holds for some pair of  integers $m$ and $n$,  the entire fundamental piece is properly immerse and it is invariant under the group $Z_m$. Figure \ref{four surfaces} shows some special values of $C$, that correspond to points in the function $\Delta  {\tilde \theta}$ associated with properly immersed cylindrical rotating drops.

\end{rem}

\begin{figure}[ht]
\centerline{\includegraphics[width=12cm,height=6cm]{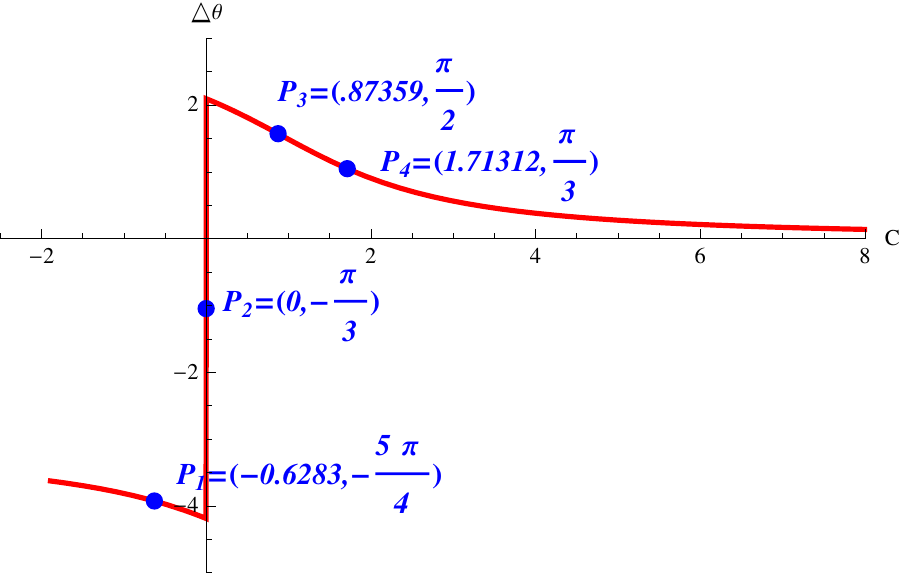}}
\centerline{\includegraphics[width=4cm,height=4cm]{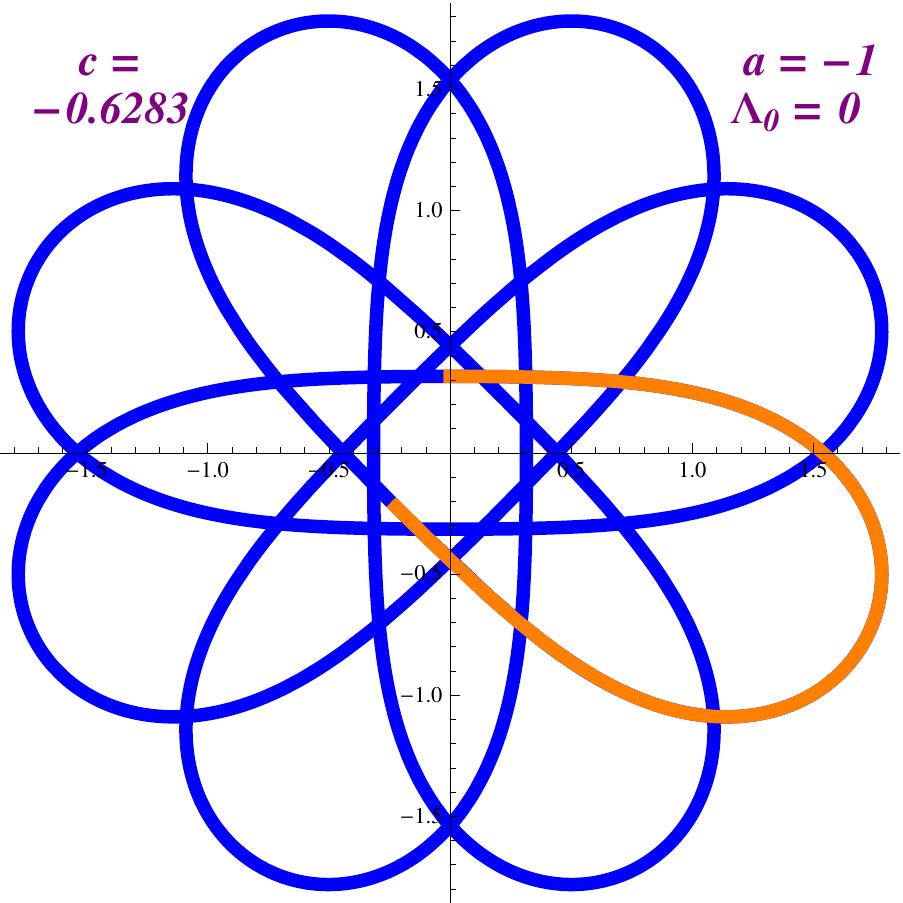}\includegraphics[width=4cm,height=4cm]{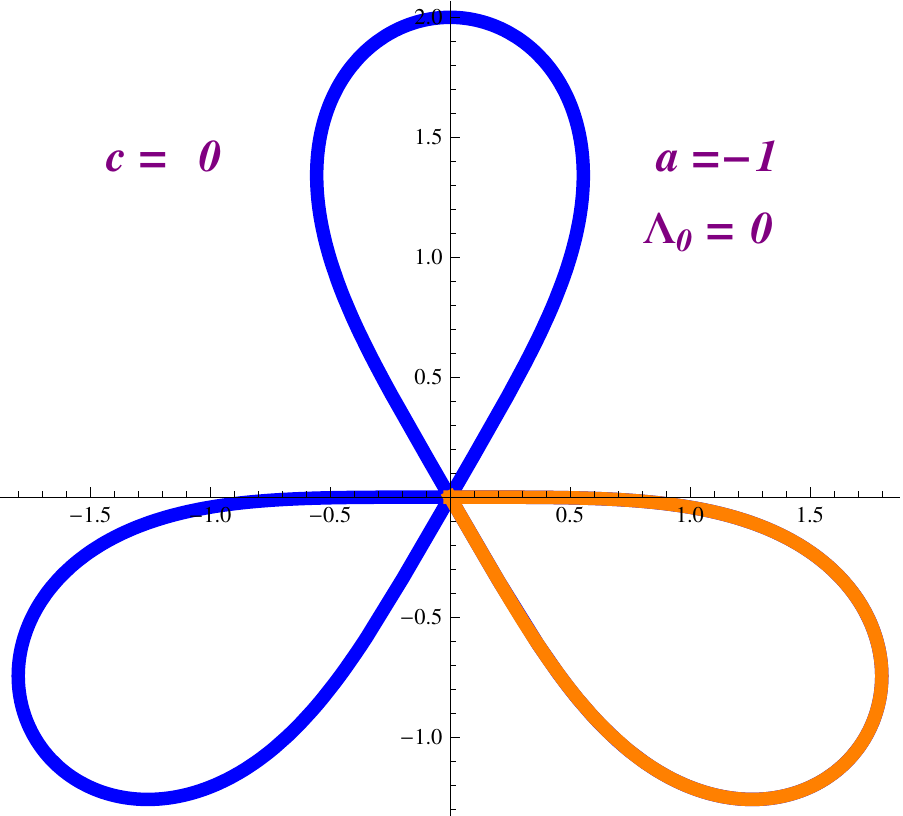}\includegraphics[width=4cm,height=4cm]{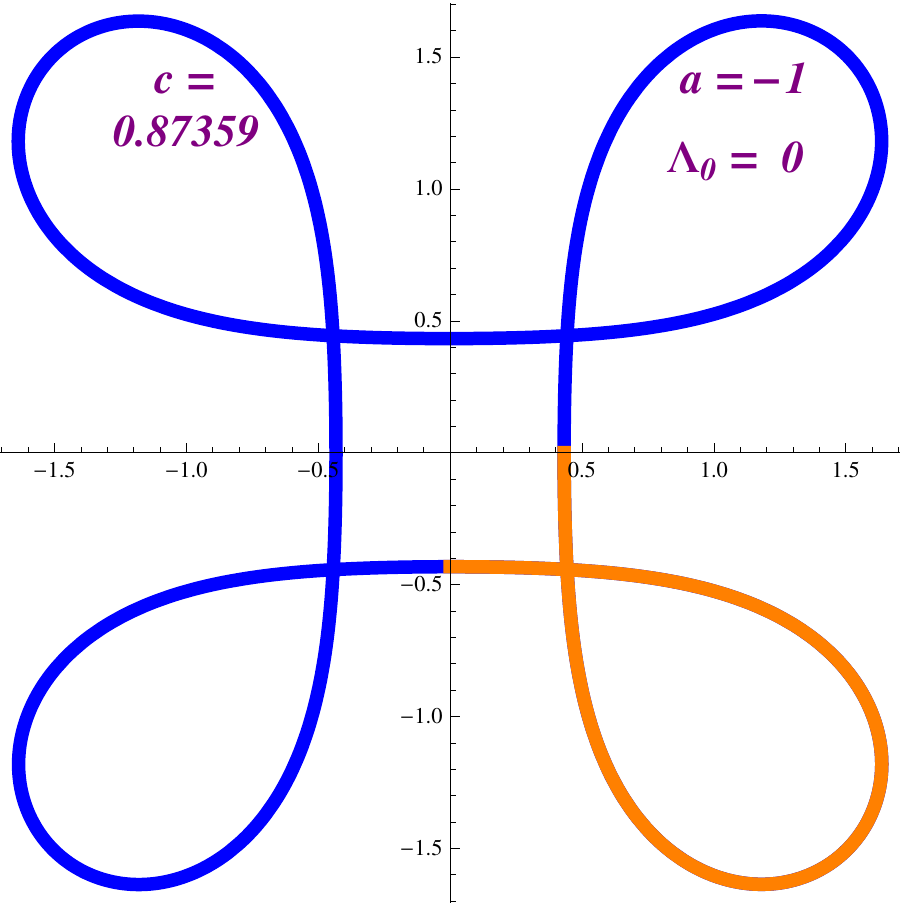}\includegraphics[width=4cm,height=4cm]{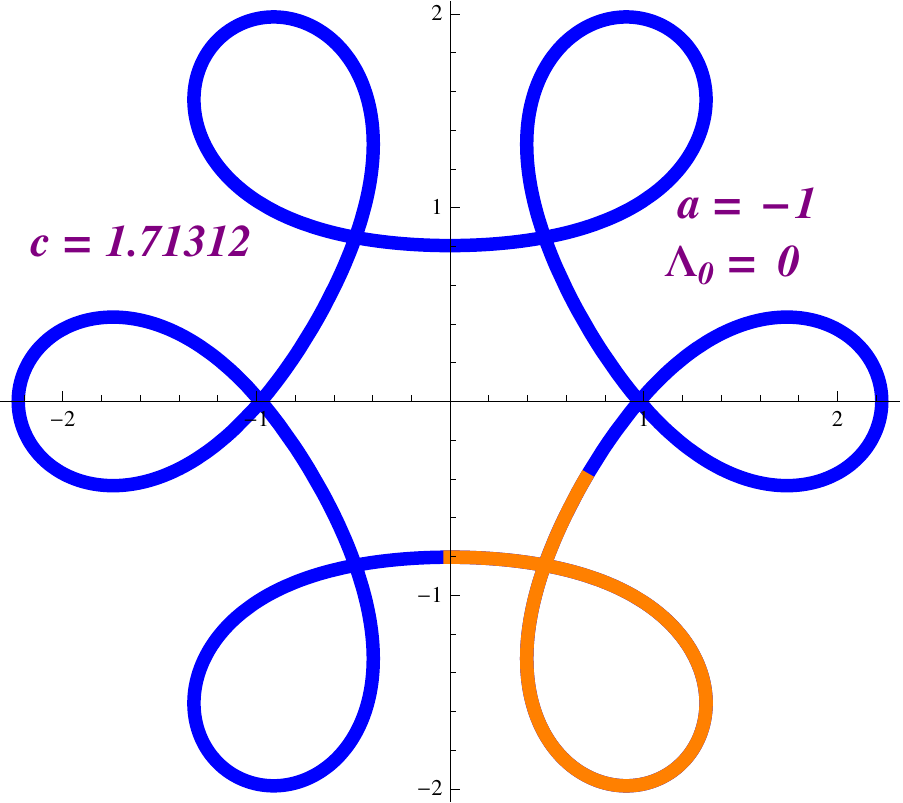}}
\caption{Points on the graph of $\Delta {\tilde \theta}$ such that the second entry is a rational multiple of $\pi$ determine properly immersed rotational cylindrical drops. Here we show the profile curve of four of these surfaces.}
\label{four surfaces}
\end{figure}

\begin{thm}
Up to dilations and rigid motions, the moduli space for all cylindrical rotating drops with $\Lambda_0=0$ is the semiline

$$\{C: C\ge C_0=-\frac{3}{\sqrt[3]{4}} \}$$

Moreover, we have: {\bf (i)}  the surface associated with $C=C_0$ is a round cylinder of radius $\sqrt[3]{2}$, and {\bf (ii)} Only  a dense, countable set of values of $C$ represent properly immersed surfaces. All other values of $C$ represent surfaces that are dense in the region bounded by two cylinders.

\end{thm}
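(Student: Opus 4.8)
The plan is to treat the three assertions separately, the last one carrying all the substance. \emph{The moduli space.} By the rescaling and orientation-reversal remark it suffices to work with $\Lambda_0=0$, $a=-1$. Then, by Proposition~\ref{G must be const} together with Definition~\ref{def of rho} and Remark~\ref{prop of rho}, a cylindrical drop with $\Lambda_0=0$ is produced from a value of $C$ and a pair of consecutive positive roots $r_1<r_2$ of $q(\cdot,C)$ with $q>0$ on $(r_1,r_2)$: the profile curve is the union of rotated copies of the corresponding fundamental piece and is determined up to a rigid motion, while $C$ is recovered as the constant value of $G$ along the TreadmillSled, a rigid-motion invariant of $\alpha$, so the assignment is injective. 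By Lemma~\ref{lem sect 3.1} such a root pair exists precisely for $C\ge C_0$ and is unique once $C>C_0$; this identifies the moduli space with $\{C\ge C_0\}$.

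\emph{The point $C=C_0$.} By Lemma~\ref{lem sect 3.1}, $q(\cdot,C_0)$ has only the double root $r_0=\sqrt[3]{4}$, so $r\equiv r_0$ along the TreadmillSled and $R^2=\xi_1^2+\xi_2^2\equiv r_0$ is constant; this forces $\xi_1\equiv 0$ and, by~\eqref{dz}, $\kappa\equiv-1/\xi_2$. Hence $\alpha$ is a circle and $\Sigma$ a round cylinder, of radius $|\xi_2|=\sqrt{r_0}=\sqrt[3]{2}$, and a direct substitution shows this cylinder solves~\eqref{EL} with $\Lambda_0=0$, $a=-1$. This gives {\bf (i)}; note that this cylinder is embedded, so it is a properly immersed member of the family as well.

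\emph{Assertion {\bf (ii)}.} For $C>C_0$ the whole profile curve is the orbit of one fundamental piece under the rotations about the origin by integer multiples of $\Delta\tilde\theta(C)$, so by the dichotomy recalled just before~\eqref{change} the drop is properly immersed when $\Delta\tilde\theta(C)\in\pi\mathbb{Q}$ and dense in $\{R_1\le|(x,y)|\le R_2\}$ otherwise. It therefore suffices to prove that $S:=\{\,C>C_0:\Delta\tilde\theta(C)\in\pi\mathbb{Q}\,\}$ is countable and dense in $(C_0,\infty)$, and I would deduce both from the statement that $\Delta\tilde\theta$ is real-analytic and non-constant on $(C_0,\infty)$. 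For real-analyticity I would write $q(r,C)=(r-r_1(C))(r_2(C)-r)\,p(r,C)$ with $p>0$ on $[r_1,r_2]$ (the remaining two roots of $q$ being either a complex-conjugate pair or both negative), observe that $r_1(C),r_2(C)$ are simple roots for $C>C_0$ by the Groebner-basis computation in Lemma~\ref{lem sect 3.1}, hence real-analytic in $C$ by the implicit function theorem, and substitute $r=\tfrac{r_1+r_2}{2}+\tfrac{r_2-r_1}{2}\sin\psi$; this cancels the square-root singularities at both endpoints and rewrites~\eqref{change} as an integral over the fixed interval $[-\tfrac{\pi}{2},\tfrac{\pi}{2}]$ of an integrand jointly analytic in $(\psi,C)$. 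For non-constancy, \eqref{lowerbound} gives $\Delta\tilde\theta(C)\to-2\pi/\sqrt{3}$ as $C\to C_0^{+}$, whereas at $C=0$ one has $q(r,0)=r(64-r^3)$, so $r_1=0$, $r_2=4$, and reducing~\eqref{change} to a Beta integral yields $\Delta\tilde\theta(0)=-\pi/3\neq-2\pi/\sqrt{3}$ (alternatively one evaluates $\lim_{C\to\infty}\Delta\tilde\theta$). A non-constant real-analytic function has discrete level sets, so each set $\{\Delta\tilde\theta=(p/q)\pi\}$ is countable and so is their union $S$; and $\Delta\tilde\theta$ is continuous and non-constant on every subinterval, hence its image there contains an open interval, which meets $\pi\mathbb{Q}$, placing a point of $S$ in every subinterval. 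This establishes {\bf (ii)}.

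\emph{Where the difficulty lies.} The moduli-space bookkeeping and the computation at $C_0$ are routine. The real content is the analysis of $\Delta\tilde\theta$, and the two delicate points are: establishing real-analyticity on all of $(C_0,\infty)$, which requires both the desingularizing change of variables and the analyticity of the root functions right down to $C_0$; and producing an honest second value of $\Delta\tilde\theta$ to rule out constancy, since a numerical plot is not a proof. Should one want the sharper assertion that $S$ is exactly the preimage of a dense subset of $\mathbb{Q}$ under a bijection, one would instead need $\Delta\tilde\theta$ strictly monotone, which amounts to controlling the sign of the integral representing $\tfrac{d}{dC}\Delta\tilde\theta$.
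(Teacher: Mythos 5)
Your treatment of the moduli space and of the degenerate point $C=C_0$ matches the paper's: both reduce the problem to Proposition \ref{G must be const}, the parametrization $\rho$, and the root count of Lemma \ref{lem sect 3.1}, and both identify the double root $r_0=\sqrt[3]{4}$ with the round cylinder of radius $\sqrt[3]{2}$. For assertion (ii) the paper itself only invokes the decomposition of the profile curve into rotated fundamental pieces and defers the ``dense, countable'' claim to the proof of Theorem 7.2 of \cite{P1}, so your self-contained argument via real-analyticity of $C\mapsto\Delta\tilde\theta(C)$ attempts more than the paper records here; the two anchor values you produce ($-2\pi/\sqrt{3}$ at $C_0^{+}$ from \eqref{lowerbound}, and the Beta-integral evaluation $\Delta\tilde\theta(0)=-\pi/3$) are both correct.

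There is, however, a genuine gap in that argument: $\Delta\tilde\theta$ is \emph{not} real-analytic, nor even continuous, across $C=0$. At $C=0$ the lower root degenerates to $r_1=0$ (the profile curve passes through the origin), and for $0<|C|\ll 1$ one has $r_1(C)\sim C^2/4$, so the factor $1/r$ in the integrand of \eqref{change} produces a contribution $\tfrac{C}{2}\int_{r_1}^{\delta}\tfrac{dr}{r\sqrt{r-r_1}}\to \pi\,\mathrm{sgn}(C)$ near the lower endpoint; hence the one-sided limits at $C=0$ are $-\pi/3\pm\pi$ and $\Delta\tilde\theta$ jumps by $2\pi$. (This is exactly the winding phenomenon quantified in Proposition \ref{complex variable prop}, where the small loop about the origin contributes $-2\pi C/|C|$.) Your desingularizing substitution therefore only yields analyticity on $(C_0,0)$ and on $(0,\infty)$ separately, and since both of your data points sit at $C\le 0$ they rule out constancy only on the first interval; non-constancy on $(0,\infty)$ still requires an independent input (e.g.\ the limit as $C\to\infty$, or comparing $\lim_{C\to 0^{+}}\Delta\tilde\theta=2\pi/3$ with a second value on that side). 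The repair is routine and the rest of the scheme (discreteness of level sets of a non-constant analytic function, density via the intermediate value theorem) is sound, but as written the step ``analytic and non-constant on all of $(C_0,\infty)$'' fails. The strict monotonicity you raise at the end is not needed for the statement and is not claimed by the paper either.
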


\begin{proof}
By Proposition \ref{G must be const}, any cylindrical rotating drop corresponds to a level set $G=C$. Considering the parametrization of the level set $G=C$ given by the map $\rho$ in Definition \ref{def of rho}, we have, using Remark \ref{prop of rho}, that a cylindrical drop exists when the polynomial $q$ is nonnegative for some positive values of $r$.  Lemma \ref{lem sect 3.1} guarantees  the existence of cylindrical rotating drops when $C>C_0$. When $C=C_0$, the polynomial $q$ has $r_0=\sqrt[3]{4}$ as the only root. A direct computation shows that in this case the level set reduces to a point in the $\xi_2$-axis and that the round cylinder with radius $\sqrt[3]{2}$ is a cylindrical rotating drop. Moreover, since there are not roots of the polynomial $q$ when $C<C_0$, then $G=C$ is the empty set and therefore there are not cylindrical rotations drops when $C<C_0$. The properly immersed property follows from the fact that the profile curve is the union of rotation of fundamental pieces as Figure  \ref{Deltatildath} illustrates. A detailed argument for this part of the theorems can be found in (\cite{P1} page 475 (The proof of Theorem 7.2)) where a similar proof is made for helicoidal constant mean curvatures surfaces in $\bfR{3}$.

\end{proof}

\subsection{Case II: $\Lambda_0=1$} First we notice that the case $a=0$ correspond to helicoidal surface with constant mean curvature. These surfaces were studied using similar techniques in \cite{P1} and for this reason we will assume here that $a\ne0$.
Before analyzing the polynomial $q$, we study those solution whose profile curve are circles.

\begin{lem}
For any $a\ne0$, the polynomial $Q= a R^3-2 R+2$ has exactly one positive root when $a<0$, it has three roots (two positive and one negative) when $a$ is between $0$ and $8/27$ and it has exactly one negative root when $a>8/27$. When $a=8/27$, the only roots are $R=1.5$ with multiplicity $2$ and $R=-3$ with multiplicity $1$. 

\end{lem}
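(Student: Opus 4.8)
The plan is to study $Q(R)=aR^{3}-2R+2$ by elementary calculus, splitting on the sign of $a$ and using that $Q(0)=2>0$, $Q'(R)=3aR^{2}-2$, and that the behaviour of $Q$ at $\pm\infty$ is governed by the leading term $aR^{3}$. When $a<0$, $Q'(R)=3aR^{2}-2<0$ for every $R$, so $Q$ is strictly decreasing on $\R$ and therefore has exactly one real root; since $Q(0)=2>0$ that root lies in $(0,\infty)$, which settles the first claim.

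When $a>0$, $Q'$ vanishes exactly at $R_{\pm}=\pm\sqrt{2/(3a)}$, with a local maximum at $R_{-}$ and a local minimum at $R_{+}$, and a short substitution gives $Q(R_{\pm})=2\mp\tfrac{4}{3}\sqrt{2/(3a)}$. The local maximum value $Q(R_{-})=2+\tfrac{4}{3}\sqrt{2/(3a)}$ is positive for all $a>0$, so, combined with $Q\to-\infty$ as $R\to-\infty$ and $Q(0)>0$, the interval $(-\infty,0]$ contains exactly one root of $Q$ (lying in $(-\infty,R_{-})$) regardless of $a$. The number of positive roots is then governed by the sign of the local minimum value $Q(R_{+})=2-\tfrac{4}{3}\sqrt{2/(3a)}$: one checks $Q(R_{+})=0\iff a=8/27$, in which case $R_{+}=3/2$, while $Q(R_{+})>0$ for $a>8/27$ and $Q(R_{+})<0$ for $0<a<8/27$. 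Since $Q(0)>0$, $Q$ decreases on $[0,R_{+}]$ and increases on $[R_{+},\infty)$ up to $+\infty$, so $(0,\infty)$ contains no root when $a>8/27$, a double root at $R=3/2$ when $a=8/27$, and two simple roots when $0<a<8/27$. Adjoining the unique negative root produces the three stated cases; in the borderline case the coefficients of $Q$ have no $R^{2}$ term, so the three roots sum to $0$ and the remaining root must be $-3$, which matches the factorization $Q(R)=\tfrac{8}{27}(R-\tfrac{3}{2})^{2}(R+3)$.

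\textbf{Main difficulty.} There is essentially no obstacle here; the calculation is routine, and the only two things that need a moment's care are checking that the local maximum value remains positive for every $a>0$ (so that there is always exactly one negative root over the range $a>0$) and identifying the threshold $a=8/27$. In the style of Lemma \ref{lem sect 3.1}, that threshold can also be extracted from a Groebner basis of $\{Q,Q'\}$, namely $\{27a-8,\,2R-3\}$, or from the discriminant of the cubic, $\Delta=4a(8-27a)$ (so $\Delta>0$ precisely for $0<a<8/27$), together with Descartes' rule of signs to separate the positive from the negative roots.
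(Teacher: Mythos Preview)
Your argument is correct. It is, however, a genuinely different route from the one in the paper. The paper does not fix $a$ and study the cubic $Q(R)$ via its critical points; instead it solves $Q=0$ for $a$ as a function of $R$, setting $h(R)=2(R-1)/R^{3}$, and then reads off the number of roots of $Q$ for a given $a$ from the shape of the graph of $h$ (vertical asymptote at $R=0$, increasing on $(-\infty,0)\cup(0,3/2)$, decreasing on $(3/2,\infty)$, maximum value $h(3/2)=8/27$). Thus the paper views the zero set of $Q$ as the graph $a=h(R)$ in the $(R,a)$-plane and slices it by horizontal lines.

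Each approach has its own payoff. Your direct analysis is self-contained and yields the threshold $a=8/27$, the double root $R=3/2$, and the remaining root $-3$ by explicit computation; the discriminant/Groebner remark you include is a neat alternative confirmation. The paper's $h(R)$ picture, on the other hand, feeds directly into what comes next: the functions $R_1(a),R_2(a),R_3(a)$ of Definition~\ref{def of ri} are precisely the three monotone branches of the inverse of $h$, so that proof doubles as the construction of those branches. If you want your proof to dovetail with the rest of the paper, you could note at the end that for $0<a<8/27$ your two positive roots lie one in $(1,3/2)$ and one in $(3/2,\infty)$ (since $Q(1)=a>0$, $Q(3/2)=\tfrac{27a}{8}-1<0$, and $Q(R)\to+\infty$), which is exactly the separation used to define $R_2$ and $R_3$.
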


\begin{proof}
We want to analyze the set $a R^3-2 R+2=0$. In order to do this, we solve for $a$ in the previous equation and define the function 
$h(R)=\frac{2(R-1)}{R^3}$, a direct computation shows that the derivative of $h(a)$ is given by $h^\prime(a) = \frac{6-4R}{R^4}$. A very standard verification tell us the the function $h$ has a vertical asymptote at $x=0$, a vertical asymptote at $y=0$ it is increasing on $(-\infty,0)\cup(0,3/2)$ and decreasing on $(3/2,\infty)$. Figure  \ref{rootsQ} shows the graph of $h(R)$. Since the roots of the polynomial $Q$ are the can be define as the local inverse of the function $h$, then the result follows.

\end{proof} 

\begin{figure}[ht]
\centerline{\includegraphics[width=8cm,height=5cm]{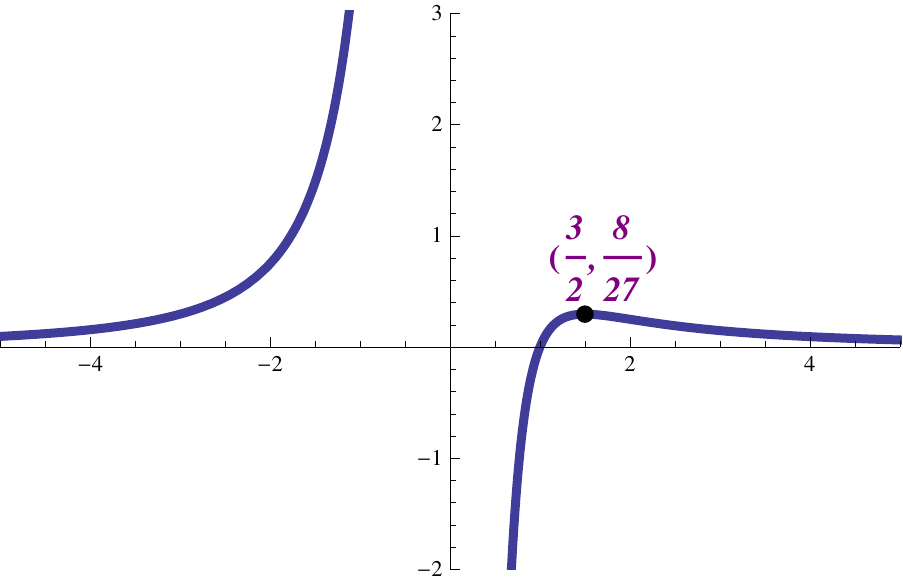}}
\caption{The graph of the function $h(R)=\frac{2(R-1)}{R^3}$ help us to understand the roots of the polynomial $Q(R)=a R^3-2 R+2$\:.}
\label{rootsQ}
\end{figure}

\begin{prop}\label{round} 
We have that a circle of radius $R>0$ and center at the origin is the profile curve of a cylindrical rotating drop with $\Lambda_0=1$  and $a\ne0$ if and only if $R$ is the absolute value of a root of the polynomial $Q=a R^3-2 R+2$

\end{prop}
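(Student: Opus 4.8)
The plan is to reduce the claim to a one-line computation with the Euler--Lagrange equation (\ref{EL}), exploiting that on a circle centered at the origin every quantity entering (\ref{EL}) is constant. First I would note that if $\alpha$ is a circle of radius $R>0$ centered at the origin, parametrized by arc length, then the radial coordinate satisfies $R^2=x(s)^2+y(s)^2\equiv R^2$ (a genuine constant) along $\alpha$; since $\Lambda_0=1$, the cylinder over $\alpha$ satisfies (\ref{EL}) exactly when its mean curvature obeys $2H\equiv 1-\tfrac a2 R^2$. On the other hand, for a circle of radius $R$ the curvature is $\kappa=\varepsilon/R$ with $\varepsilon\in\{+1,-1\}$ determined by the orientation of $\alpha$ (in the convention $\theta'=-\kappa$, $H=\kappa/2$ used here, the two orientations give $\kappa=1/R$ and $\kappa=-1/R$), so $2H=\kappa=\varepsilon/R$.

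Combining these, the cylinder over the oriented circle is an equilibrium surface if and only if $\varepsilon/R=1-\tfrac a2 R^2$. I would then set $r_0:=\varepsilon R$, so that $|r_0|=R$, $r_0^2=R^2$ and $1/r_0=\varepsilon/R$; multiplying the relation by $2R$ turns it into $2r_0-a r_0^3=2$, that is, $Q(r_0)=a r_0^3-2r_0+2=0$. Consequently a circle of radius $R$ with a prescribed orientation generates a cylindrical rotating drop if and only if $Q(\varepsilon R)=0$; letting the orientation vary, a circle of radius $R$ is a profile curve if and only if $Q(R)=0$ or $Q(-R)=0$, i.e. if and only if $R$ is the absolute value of a real root of $Q$. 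This proves both directions simultaneously. The same conclusion can be reached inside the TreadmillSled framework: the TreadmillSled of such a circle is the single point $(0,\varepsilon R)$ on the $\xi_2$-axis, and the requirement that $\beta=(\xi_1,\xi_2)$ be stationary at this point forces, via $\xi_1'=1+\kappa\xi_2=0$ from (\ref{dz}), the relation $\kappa=-1/\xi_2$; substituting back into (\ref{dtheta}) again yields $Q(\varepsilon R)=0$.

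There is no real obstacle in this argument; the only thing demanding attention is the orientation bookkeeping, which is exactly why the polynomial is $Q=aR^3-2R+2$ with a single sign for the constant term and why the statement speaks of the \emph{absolute value} of a root rather than a positive root: a negative root $r_0$ of $Q$ corresponds to the oppositely oriented circle of radius $|r_0|$, the orientation reversal being precisely what interchanges $Q(R)$ and $-Q(-R)$. The hypotheses $R>0$ and $a\neq0$ merely exclude degeneracies (the case $a=0$ being the CMC helicoidal situation set aside at the outset of Case II) and do not otherwise enter.
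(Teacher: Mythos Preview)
Your argument is correct and is essentially the same approach as the paper's own proof: both reduce the equilibrium condition on a circle to $\pm 1/R = 1-\tfrac{a}{2}R^{2}$ and then identify the resulting pair of cubics with $Q$ via the orientation involution. The only cosmetic difference is that the paper writes out the two polynomials $Q=aR^{3}-2R+2$ and $\tilde Q=aR^{3}-2R-2$ and observes that $x$ is a root of $Q$ iff $-x$ is a root of $\tilde Q$, whereas you fold the two cases into the single substitution $r_{0}=\varepsilon R$ to land directly on $Q(r_{0})=0$; your added TreadmillSled remark is a consistency check not present in the paper. (One small wording quibble: ``multiplying the relation by $2R$'' should really be ``multiplying by $2r_{0}$'' or ``by $2R$ and then by $\varepsilon$'' to get $2r_{0}-ar_{0}^{3}=2$ directly, but the computation itself is correct.)
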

\begin{proof}

Depending of the orientation of the parametrization that we use, the mean curvature of the round cylinder with radius $R$ is either   $-\frac{1}{2R}$ or $\frac{1}{2R}$. Therefore the equation $2 H=1-\frac{a}{2}\, R^2$,

$$ \frac{1}{R}=1-\frac{a}{2}\, R^2 \com{or} -\frac{1}{R}=1-\frac{a}{2}\, R^2\:.$$

That is, $R$ must be a root of the polynomial $Q=a R^3-2 R+2$ or $\tilde{Q}=a R^3-2 R-2$.
A direct computation shows that $x$ is root of $Q$ if and only if $-x$ is root of $\tilde{Q}$. Therefore every positive root of $\tilde{Q}$ is the absolute value of the root of $Q$. This completes the proof.

\end{proof}

\begin{mydef}\label{def of ri}
Let $h(R)=\frac{2(R-1)}{R^3}$, see Figure \ref{rootsQ}, and let us define $R_1:(-\infty,0)\cup (0,\infty)\to {\bf R}$, $R_2:(0,8/27)\to {\bf R}$ and $R_3:(0,8/27)\to {\bf R}$

$$\begin{array}{cccc}
R_1(a)=R  \com{such that}&  h(R)=a & \com{with} &R<1\:,\\
R_2(a)=R \com{such that}& h(R)=a  &\com{with}  &1<R<3/2\:,\\
R_3(a)=R \com{such that} & h(R)=a  &\com{with} & 3/2<R<\infty \:.
\end{array}$$

We will also define the functions $r_1:(-\infty,0)\cup (0,\infty)\to {\bf R}$, $r_2:(0,8/27)\to {\bf R}$ and $r_3:(0,8/27)\to {\bf R}$ by

$$ r_1(a)=R_1^2(a),\quad  r_2(a)=R_2^2(a),\quad  r_3(a)=R_3^2(a)\:. $$

Figure \ref{graphs of r} shows the graphs of the functions $r_1$, $r_2$ and $r_3$.

\end{mydef}

\begin{figure}[ht]
\centerline{\includegraphics[width=4cm,height=3cm]{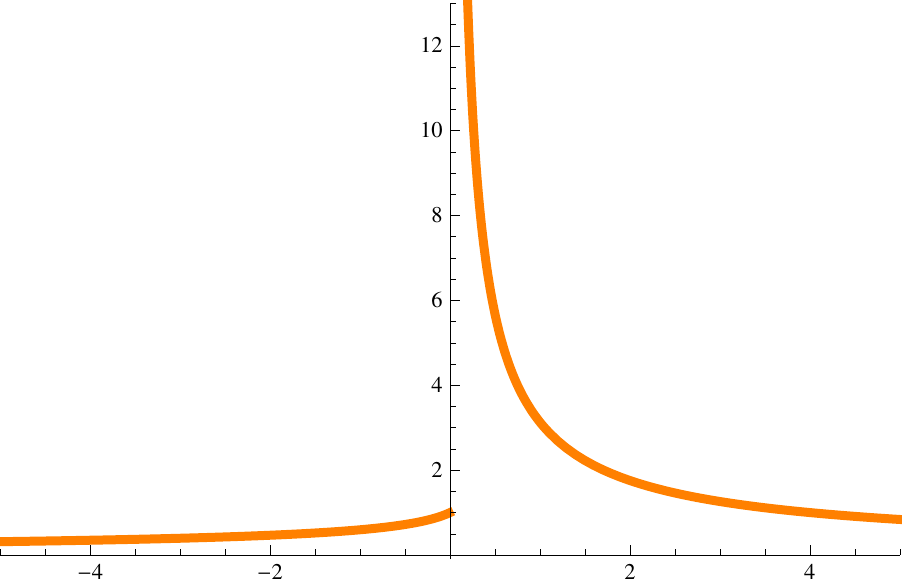}\hskip.2cm \includegraphics[width=4cm,height=4cm]{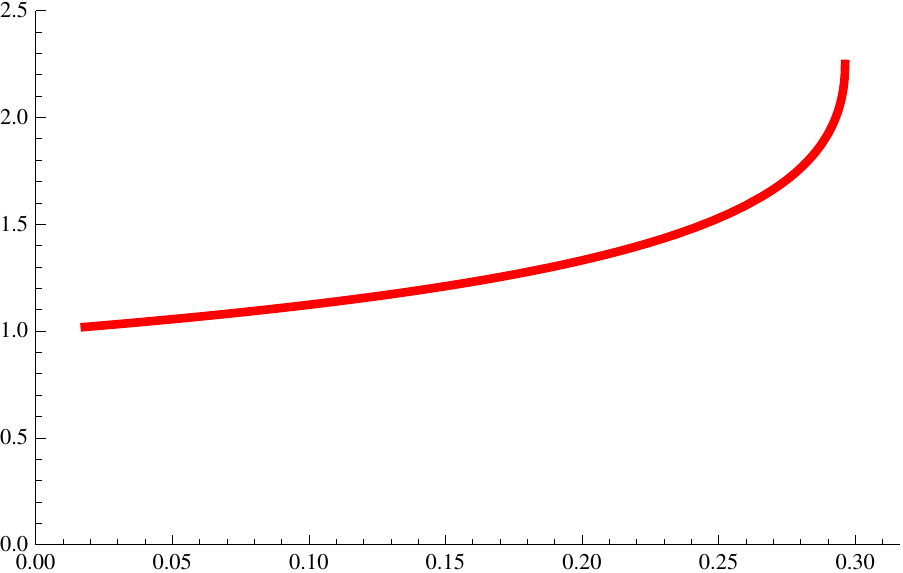}\hskip.2cm\includegraphics[width=4cm,height=3cm]{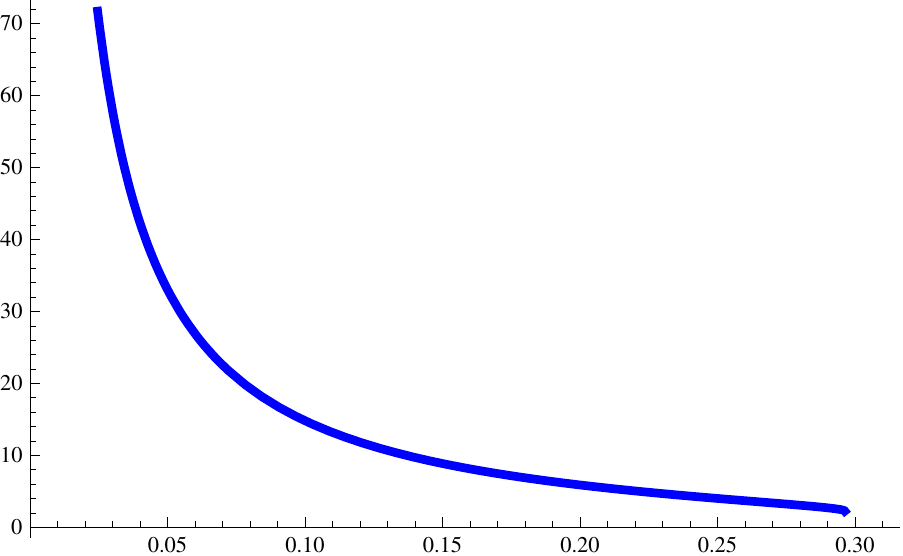}\hskip.2cm\includegraphics[width=4cm,height=3cm]{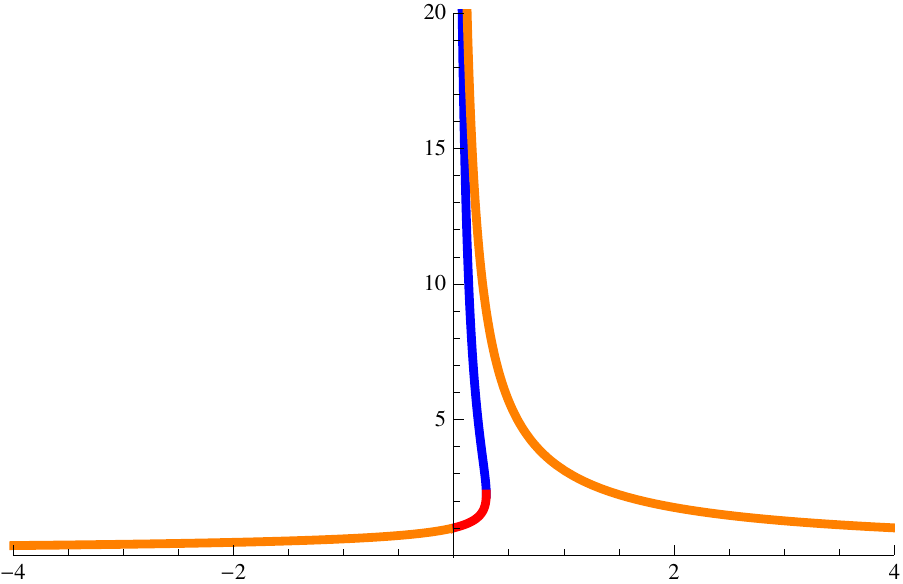}}
\caption{Graph of the function $r_1(a)$, $r_2(a)$ and $r_3(a)$. The last graph show all three function in the same cartesian plane\:.}
\label{graphs of r}
\end{figure}

When $\Lambda_0=1$, the polynomial $q(r,a,C)$ reduces to

$$q=q(r,a,C)=-16 C^2 + 64 r+32 C\,  r-16 \, r^2 - 8 a C\,  r^2+8 a \, r^3 -a^2\,  r^4\:. $$

Recall that, given two particular values $a$ and $C$, the level set $G=C$ is not empty  if and only if there exist two positive consecutive roots of the polynomial $q(r,aC)$. In every case $q(0)=-16 C^2\le0$ and the limit when $r$ goes to infinity of $q(r)$ is negative infinity. The following lemma provides the number of roots of $q(r,C)$ depending on the values $a$ and $C$.

\begin{lem} \label{roots}
Let $r_1$, $r_2$ and $r_3$ be as in Definition \ref{def of ri} and for $i=1, 2, 3$  define
$$ C_i=\frac{16-8r_i+6a{r_i}^2-a^2{r_i}^3}{4(-2+ar_i)} $$

and

$$q=q(r, a,C)=-16 C^2 + 64 r+32 C\,  r-16 \, r^2 - 8 a C\,  r^2+8 a \, r^3 -a^2\,  r^4 \:.$$

Recall that the domain of $C_i$ is the same domain of $r_i$. That is, the domain of $C_1(a)$ is   $\{a\ne0\}$ and the domain of $C_2(a)$ and $C_3(a)$ is the interval  $(0,\frac{8}{27}]$.   For any $a\ne0$ and any $C$, the polynomial $q$ has non negative real roots whose multiplicities are given in the following table.

\begin{tabular}{| l | l | c |}
\hline
{\rm range of }$a$&{\rm range of} $C$&{\rm number of distinct real roots of }$q$\\ \hline

$a<0\Rightarrow C_1(a)<0$&$C<C_1(a)$&$0$\\
&$C=C_1(a)$&$1$\\
&$C>C_1(a)$&2\\
\hline \hline
$a\in (0,\frac{8}{27}) \Rightarrow C_2(a)<0$,  $C_1(a)>0$,\\[2mm] $C_2(a)<C_3(a)<C_1(a)$&$C>C_1(a)$&$0$\\
&$C=C_1(a)$&$1$\\
&$C_3(a)<C<C_1(a)$&$2$\\
&$C=C_3(a)$&$3$\\
&$C_2(a)<C<C_3(a)$&$4$\\
&$C=C_2(a)$&$3$\\
&$C<C_2(a)$&$3$\\
\hline \hline
$a=\frac{8}{27}\Rightarrow C_2(a)=C_3(a)=-9/8$, \\[2mm] $C_1(\frac{8}{27})=9$&$C<-9/8$&$2$\\
&$C=-9/8$&$2$(*)\\
&$-9/8<C<9$&$2$\\
&$C=9$&$1$(**)\\
&$9>C$&$0$\\
\hline \hline
$a> 8/27 \Rightarrow C_1(a)>0$& $C>C_1(a)$&$0$\\
&$C=C_1(a)$&$1$\\
&$C<C_1(a)$&$2$\\
\hline 
\end{tabular}

(*) In this case the roots are $9/4$ with multiplicity $3$ and $81/4$ with multiplicity one.\\
(**) In this case the only real root is $9$ with multiplicity $2$. 
\end{lem}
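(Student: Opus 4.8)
The plan is to pin the count down through the multiple roots of the quartic $q(\cdot,a,C)$, because the number of distinct real roots of $q$ can change only when $q$ acquires a real multiple root; once the locus of such $(a,C)$ is identified, the table is filled in by one evaluation of the count in each resulting region. The form of $q$ that makes this workable is the one coming from its construction: for $\Lambda_0=1$ one has $q(r,a,C)=64r-w^2$, where $w:=4C-4r+ar^2=8\xi_2$. From this, three facts are immediate and are used throughout. (a) $q(r)<0$ for every $r<0$, so $q$ has no negative real roots, and ``number of non negative real roots'' coincides with ``number of real roots''. (b) $q(0)=-16C^2\le0$, with equality only at $C=0$, where $r=0$ is a \emph{simple} root of $q$ and the root of $q$ nearest $r=0$ lies at $r\approx C^2/4>0$; hence the count of real roots does not change as $C$ passes through $0$. (c) $q$ has leading coefficient $-a^2<0$, so $q\to-\infty$, the roots of $q$ cannot escape to infinity as $C$ varies with $a\ne0$ fixed, and consequently the number of distinct real roots of $q$ is locally constant off the real‑multiple‑root locus.

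To describe that locus, observe that a multiple root at $r=r_*$ means $q(r_*)=0$, i.e.\ $w^2=64r_*$, together with $q'(r_*)=64-2w(2ar_*-4)=0$, i.e.\ $w(ar_*-2)=16$ (the case $ar_*=2$ being impossible, as it forces $64=0$). Eliminating $w$ gives $64r_*(ar_*-2)^2=256$, that is $a^2r_*^3-4ar_*^2+4r_*-4=0$, and then $C$ is recovered from $4C=w+4r_*-ar_*^2=\frac{16}{ar_*-2}+4r_*-ar_*^2$, which simplifies to exactly $C=\frac{16-8r_*+6ar_*^2-a^2r_*^3}{4(ar_*-2)}$; conversely any $r_*$ solving that cubic, paired with this value of $C$, gives a genuine double root of $q$. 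Now the substitution $r=R^2$ turns the cubic into $(aR^3-2R)^2-4=Q(R)\,\tilde{Q}(R)$ with $Q=aR^3-2R+2$ and $\tilde{Q}=aR^3-2R-2$; since $x$ is a root of $Q$ iff $-x$ is a root of $\tilde{Q}$ (proof of Proposition \ref{round}), the positive roots of the cubic are precisely the numbers $R^2$ with $Q(R)=0$, namely $r_1(a),r_2(a),r_3(a)$ of Definition \ref{def of ri}. Therefore the real‑multiple‑root locus is exactly the union of the graphs $C=C_i(a)$ over the stated domains, and these are nothing but the round‑cylinder loci of Proposition \ref{round} (the double root $r_i$ being the squared radius of the corresponding round cylinder). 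The facts already established about $Q$ --- how many of the $r_i(a)$ are positive, and in what order, for $a<0$, $0<a<8/27$, $a=8/27$, $a>8/27$ --- produce exactly the four blocks of the table, with the indicated number of intervals in each.

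It remains to read off the count in each open $C$‑interval cut out by the $C_i(a)$, and at those boundary values. By (c) the count is constant on each interval, so one test point suffices, and the test is easy: $q(r)\ge0$ is $|ar^2-4r+4C|\le8\sqrt{r}$, which for $|C|$ large fails for all $r\ge0$ on the ``outer'' side (count $0$) and holds only on a small oval around the positive zero of $ar^2-4r+4C$ on the other side (count $2$), while a value of $C$ near $0$ attains the maximal count; combining this with the ordering of $r_1,r_2,r_3$ fixes every row. At a boundary value $C=C_i(a)$, $q$ has the double root $r_i$ together with the roots surviving from the adjacent interval; whether $r_i$ is a local maximum of $q$ (an isolated point of $\{q\ge0\}$, which opens into an oval on one side of $C_i$ and disappears on the other) or a local minimum (an interior pinch of an interval) is decided by the sign of $q''(r_i)$, and the direction of opening by $\partial q/\partial C=-8w$ at $r_i$. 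Finally the degenerate case $a=8/27$, where $R=3/2$ is a double root of $Q$ so that $r_2=r_3=9/4$ and the curves $C_2,C_3$ merge at $-9/8$ forcing a triple root of $q$ at $r=9/4$, and the explicit factorizations recorded in notes $(*)$ and $(**)$, are checked by direct substitution. The only genuinely delicate part is this last bookkeeping --- verifying that at each $C_i(a)$ precisely the advertised pair of roots coalesces (so that no row is mislabeled, and so that the merged curve at $a=8/27$ produces no net jump) --- and for this the strict ordering of $r_1,r_2,r_3$ furnished by the monotonicity of $h(R)=2(R-1)/R^3$ is exactly what is required.
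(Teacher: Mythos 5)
Your proposal is correct and follows essentially the same strategy as the paper: show $q$ has no negative roots, identify the multiple-root locus $\{q=0,\,q'=0\}$ with the curves $C=C_i(a)$ coming from the roots of $Q=aR^3-2R+2$, and then determine the count in each complementary region by continuity plus a test point. The identity $q=64r-w^2$ with $w=8\xi_2=ar^2-4r+4C$ that you exploit (making both the absence of negative roots and the factorization $(aR^3-2R)^2-4=Q\tilde Q$ immediate) is a cleaner way to carry out the same eliminations the paper performs directly on $q$ and $q'$, but it does not change the structure of the argument.
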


\begin{proof} We first show that the polynomial $q$ has no negative roots. Since  $ \lim_{r \to -\infty} q= -\infty$
and $q(0)\le 0$, we have that if $q$ has a non negative root, then $q$ must have a local non negative maximum at some negative $r_0$. This is impossible because solving the equation

$$q^\prime(r,C,a)=-4 (-16 - 8 C + 8 r + 4 a C r - 6 a r^2 + a^2 r^3)  = 0 $$

we obtain that

$$C_0=\frac{16-8 r_0+6 a{r_0}^2 - a^2 {r_0}^3 }{4(-2+ar_0)} \:.$$

Replacing this value for $C$ in the expression for $q$ we obtain

$$q(r_0,C_0)=64\left(  r_0 - \frac{4}{(-2+a r_0)^2}\right)\:.$$

Notice that it is impossible to have $q(r_0,C_0)$ equal to zero with $r_0$ negative. This finishes the proof of the fact that $q$ has no negative roots. Using the interpretation of the polynomial $q$ with the cylindrical rotating drops, we have that a circular cylinder with radius $R$ is a rotating drop if and only if $r=R^2$ and some $C$ solve the system of equations

$$q(r,C)=0\com{and} q^\prime(r,C)=0\:. $$

Using Lemma \ref{round} and the definition of $r_i$,  we obtain that the only solution of the system of equations $q(r,C)=0\com{and} q^\prime(r,C)=0 $ are $(r_1,C_1)$, $(r_2,C_2)$ and $(r_3,C_3)$. Now the proof follows by a continuity argument.
For example to show the case $a<0$ we notice that the curve
$C=C_1(a)$ divides the region $\{(a,C):a<0\}$ into two pieces. A direct computation shows that the limit when $a$ goes to $-\infty$ of $C_1(a)$ is $0$ and the limit when $a$ goes to $0^-$ of $C_1(a)$ is $-1$. We can directly check that if $a=-2$ then when $c=0$ the polynomial has two positive roots, when $a=-2$ and $C=C_1(-2)=-0.790706...$ then $q$ has exactly one root, $r_1$, and finally if $a=-2$ and $C=-1$ then the polynomial $q$ has not roots. Figure 1.3 shows the graphs of these polynomials.

\begin{figure}[ht]
\centerline{\includegraphics[width=4cm,height=3cm]{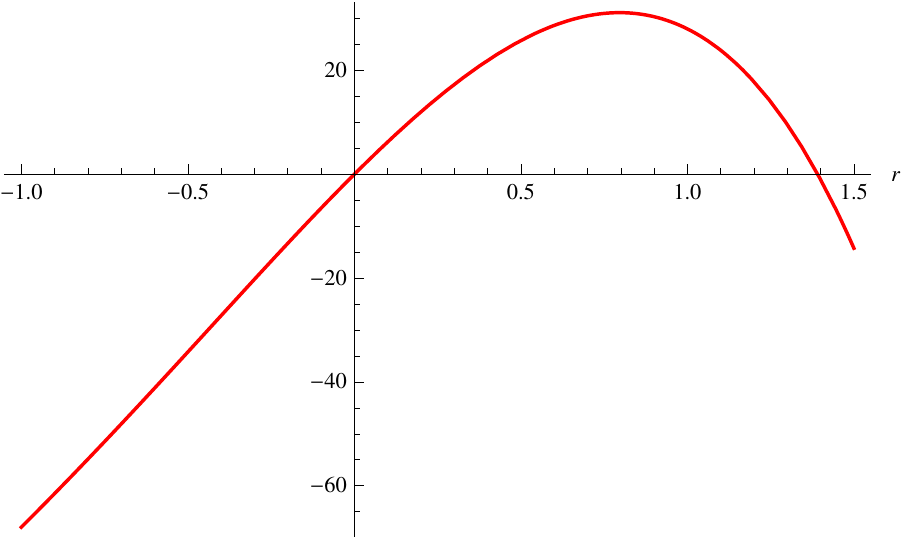}\hskip.2cm \includegraphics[width=4cm,height=4cm]{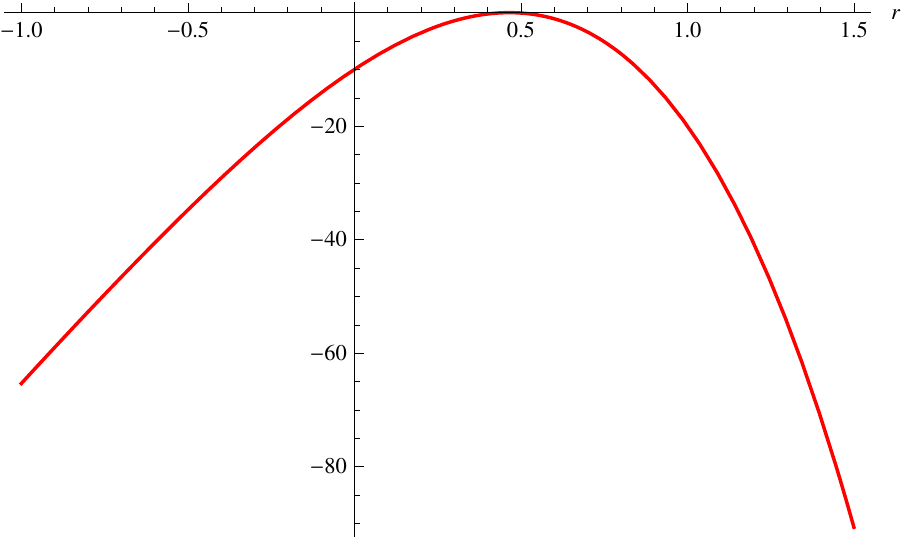}\hskip.2cm\includegraphics[width=4cm,height=3cm]{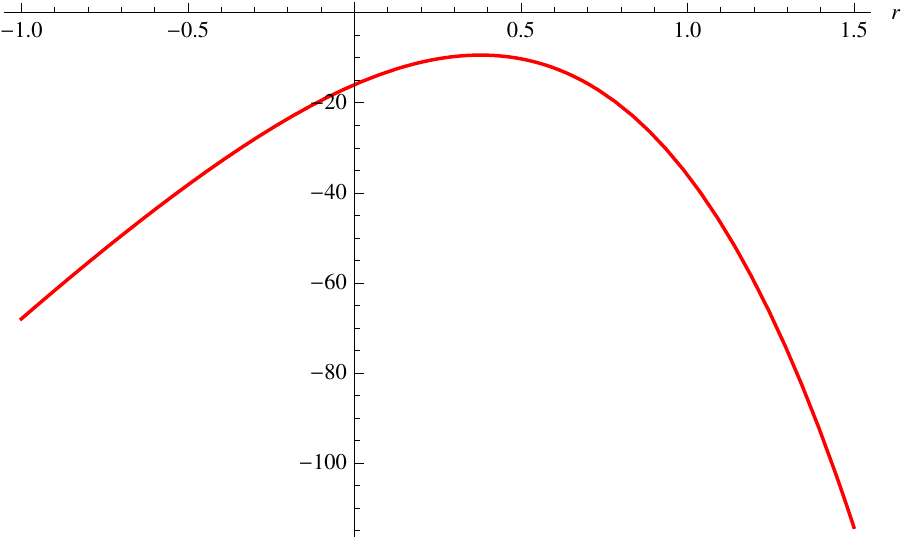}}
\label{c graphs}
\caption{  Graph of the polynomial $q$ when $a=-2$ and $C=0$, $C=C_1(-2)=-0.79...$ and $C=-1$  respectively. }
\end{figure}

Now we can argue by contradiction that the polynomial $q$ does not have two roots when  $a<0$ and $C<C_1(a)$. Let us assume that for some $(\tilde{a},\tilde{C})$, $q(r, \tilde{a}, \tilde{C})$ has one or more roots and let us consider a curve $\beta$  that connects the point $(\tilde{a},\tilde{C})$ with the point $(-2,-1)$  such that $\beta$ is contained in the portion of the $a$-$C$ plane given by $\{ (a,C):a<0, \quad C<C_1(a) \}$. By the continuity of the function $q(r,C,a)$,  we have that for some value $(a,C)$ on the curve $\beta$, the polynomial $q$ must have a root with multiplicity greater than $1$. This is impossible because the only solution of the system  $q(r,C)=0\com{and} q^\prime(r,C)=0 $ are $(r_1,C_1)$, $(r_2,C_2)$ and $(r_3,C_1)$. All the other cases are similar. Figure \ref{graph of q} shows the graphs of the polynomial $q$ for some other values of $a$ and $C$.

\end{proof}

\begin{figure}[ht]
\centerline{\includegraphics[width=3cm,height=2.5cm]{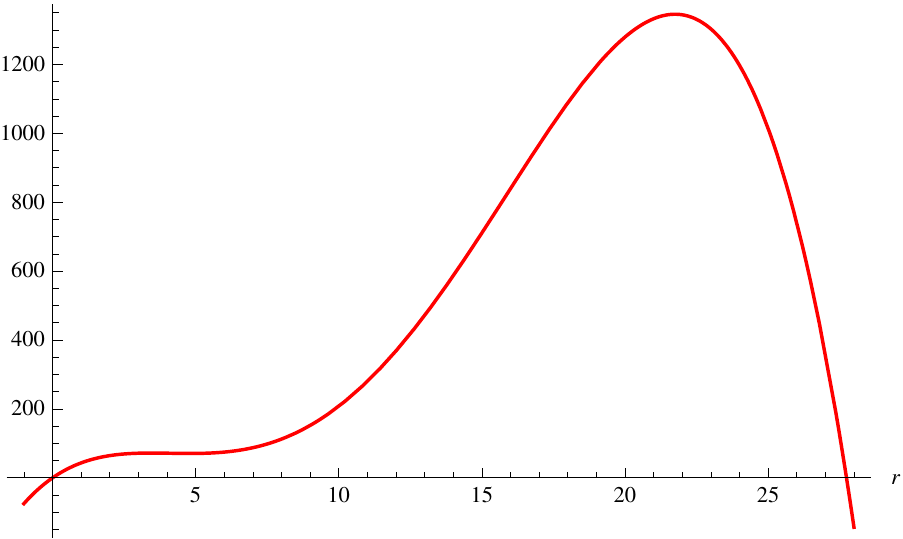}\hskip.2cm \includegraphics[width=3cm,height=2.5cm]{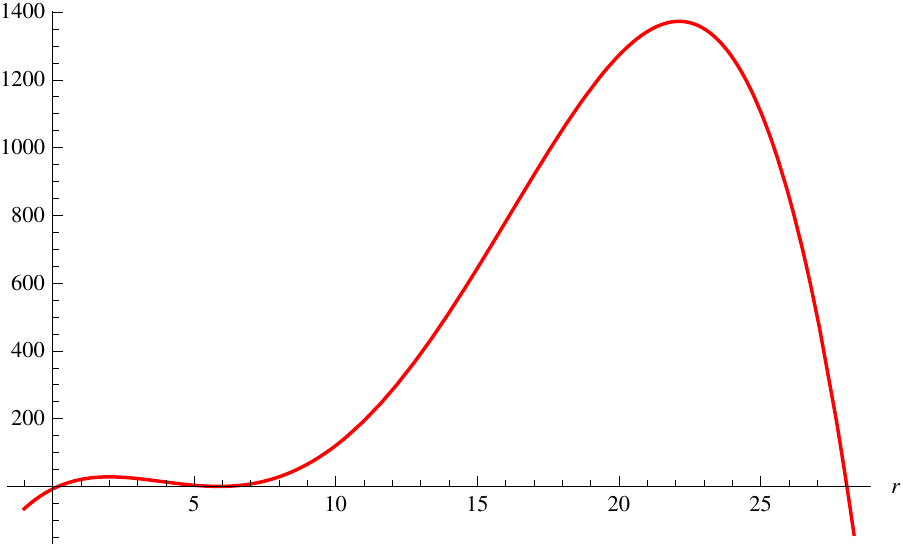}\hskip2cm\includegraphics[width=3cm,height=2.5cm]{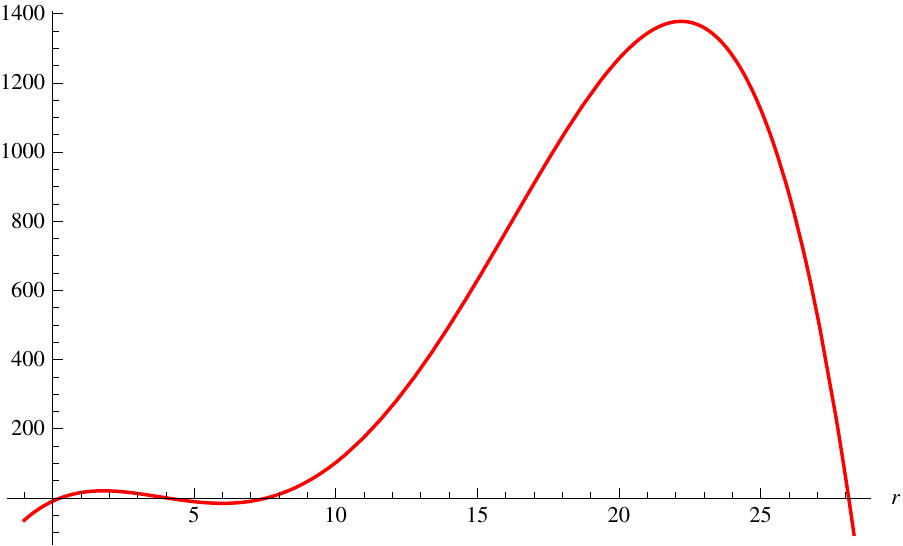}\hskip.2cm\includegraphics[width=3cm,height=2.5cm]{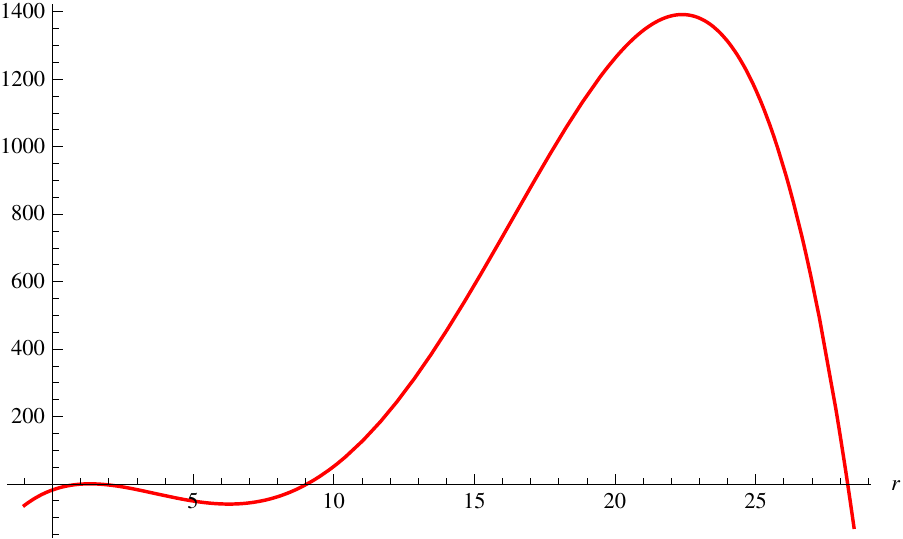}\hskip.2cm \includegraphics[width=3cm,height=2.5cm]{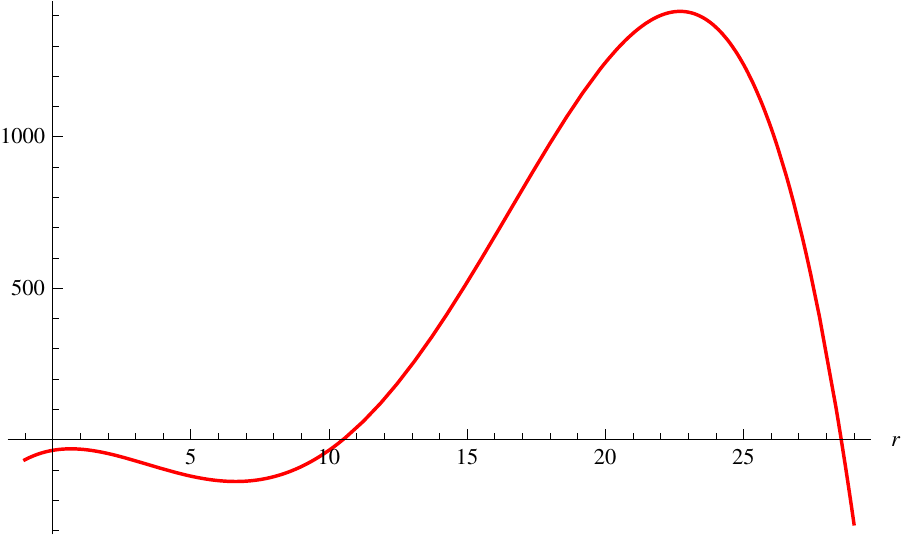}}
\caption{ Graph of the polynomial $q$ when $a=0.2$ and $C=-0.2$, $C=C_3(0.2)=-0.69...$, $C=-0.8$, $C=C_2(0.2)=-1.06...$ and $C=-1.5$ respectively.}
\label{graph of q}
\end{figure}

\begin{figure}[ht]
\centerline{\includegraphics[width=4cm,height=3cm]{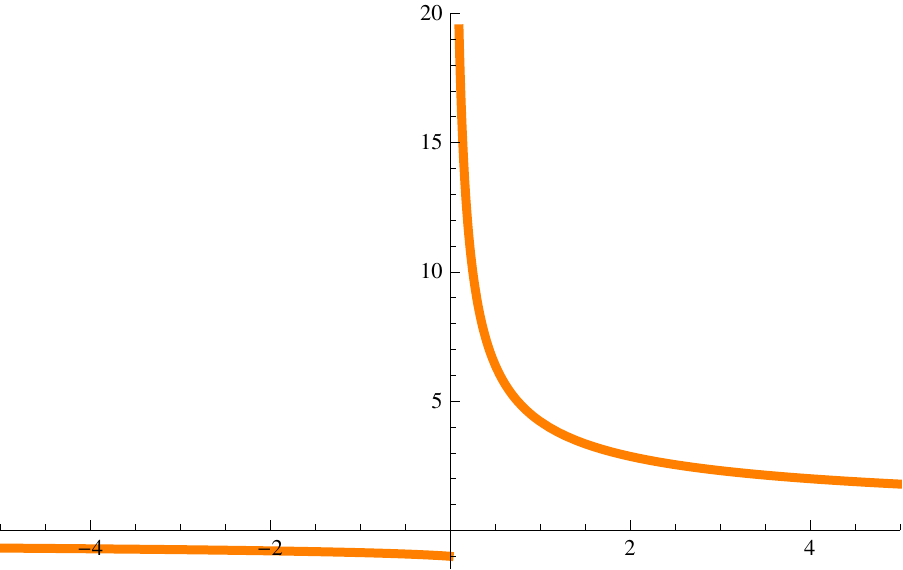}\hskip.2cm \includegraphics[width=4cm,height=4cm]{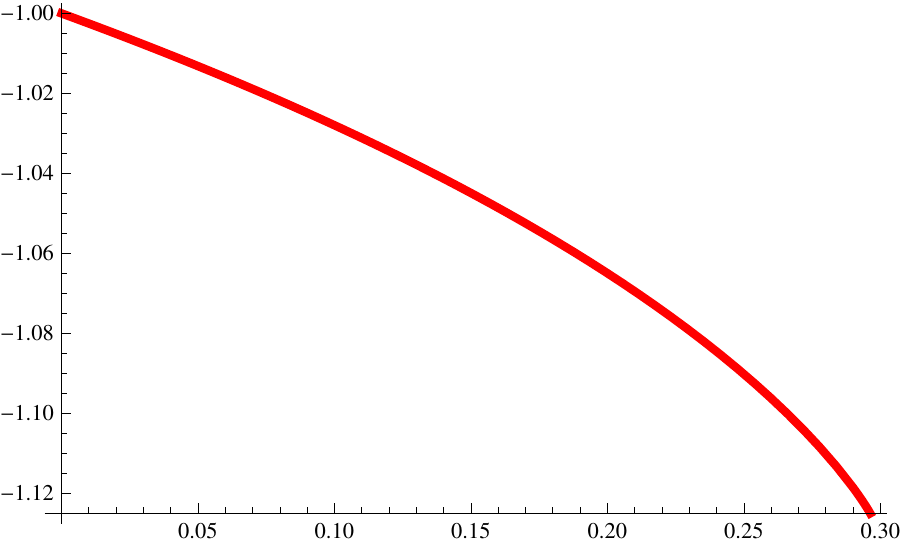}\hskip.2cm\includegraphics[width=4cm,height=3cm]{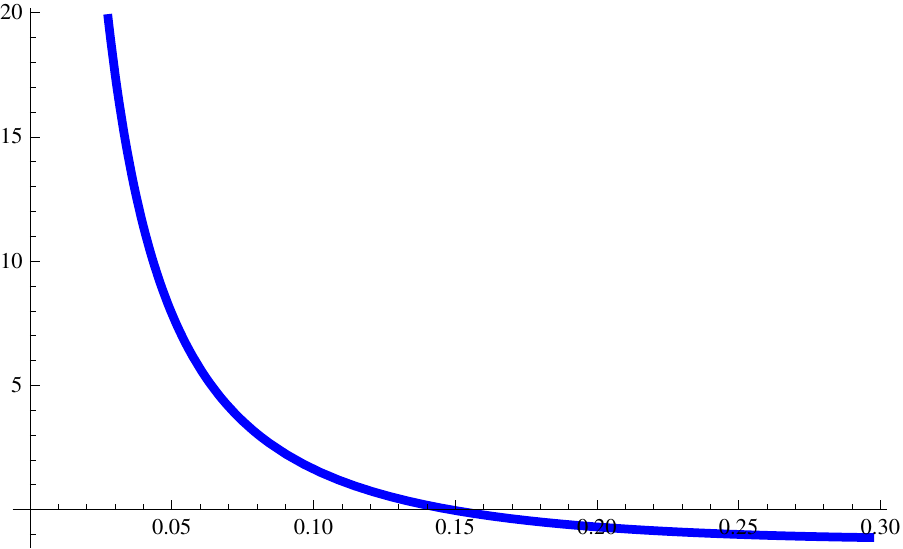}\hskip.2cm\includegraphics[width=4cm,height=3cm]{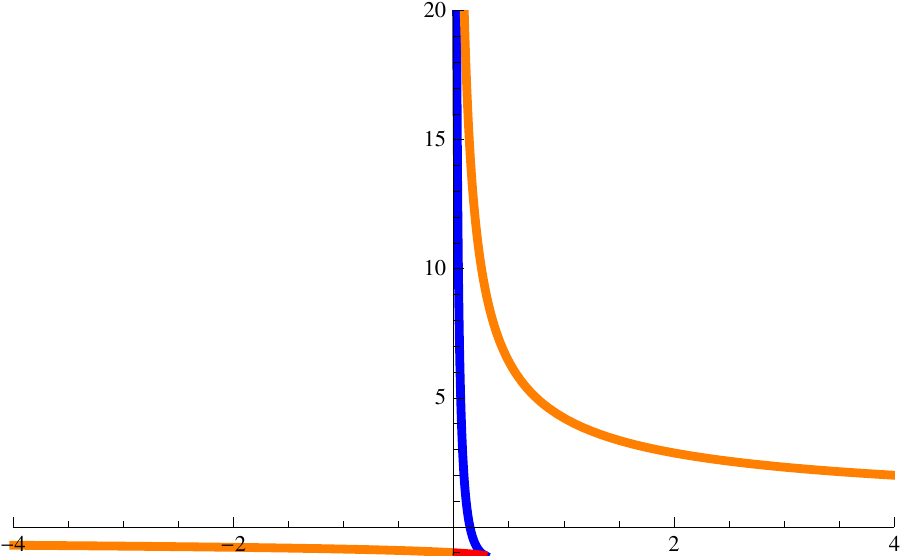}}
\label{c graphs 2}
\caption{ Graph of the functions $C_1(a)$, $C_2(a)$ and $C_3(a)$. The last graph shows all three functions in the same Cartesian plane.}
\end{figure}

\begin{thm}\label{conf space b=1} Let $\Lambda_0=1$ and let $\Delta{\tilde \theta}$ be the function defined in equation (\ref{change}).  Define

$$\Omega_1=\{(a,C): C> C_1(a),\, a < 0\}\quad \Omega_2=\{(a,C): C_2(a) < C < C_3(a),\, 0<a< \frac{8}{27}\}\,\:,$$

$$\Omega_3=\{(a,C): C< C_1(a),\, a > 0\}\quad \Omega =\Omega_1\cup\Omega_3\setminus \Omega_2\:,$$

$$\beta_1=\{ (a,C)\, :\,  C=C_1(a),\,  a\ne 0  \}\:,$$

$$\beta_2= \{ (a,C)\, :\,  C=C_2(a),\, 0< a<\frac{8}{27}  \} \:,$$

$$\beta_3=\{ (a,C)\, :\,  C=C_3(a),\, 0< a\le \frac{8}{27}  \}\:. $$

Under the convention that a point $(a,C)$ represents a cylindrical rotating drop if the TreadmillSled of its profile curve is contained in the level set $G=C$, we have:

{\bf i.)}  Every point $(a,C)$ in the interior of  $\Omega$ represents a cylindrical rotating drop such that the length of the fundamental piece is finite. The TreadmillSled of the profile curve of this surface is parametrized by $\rho$ defined for values of $r$ between the only two roots of the polynomial $q(r,a,C)$.

{\bf ii.)}  Every point in  $\Omega_2 $ represents two cylindrical rotating drops, both with   fundamental pieces of bounded length. The TreadmillSled of the profile curves of these surfaces are parametrized by $\rho$ which is defined for those values of $r$  which move between the first and second root of the polynomial $q(r,a,C)$ and the third and fourth root of the polynomial $q(r,a,C)$ respectively.

{\bf iii.) } Every point on the curve $\beta_1$ represents a circular cylindrical rotating drop.

{\bf iv.)} Every point on the curve  $\beta_2$ represents two cylindrical rotating drops:  a circular cylinder and a non circular cylinder with bounded length of its fundamental piece.

{\bf v.)} Every point in the curve $\beta_3$ represent three cylindrical rotating drops. One is a circular rotating drop. The second one has TreadmillSled parametrized by $\rho$ defined for those values of $r$  that move between the first and second root of the polynomial $q(r,a,C)$, recall the the second root has multiplicity $2$. The third surface   has TreadmillSled parametrized by $\rho$ defined for those values of $r$  that moves between the second and third root of the polynomial $q(r,a,C)$. The second and third surface are not properly immersed and their profile curve converge to a round circle going around infinitely many times. Solutions like the second or third surface in this case will be call \textbf{ drops of exceptional type}. See Figure \ref{pc5}.

{\bf vi.)} The point $(a,C)=(\frac{8}{27},-\frac{9}{8} )$ represents two cylindrical rotating drops:  a circular cylinder and one drop of exceptional type.

{\bf vii.)} Up to a rigid motion, every cylindrical drop falls into one of the cases above.

{\bf viii.)} Every drop that is not exceptional is either properly immersed (when $d\Delta{\tilde \theta}(a,C)/\pi$ is a rational number) or it is dense in the region bound by two round cylinders (when $d\Delta{\tilde \theta}(a,C)/\pi$  is an irrational number).

\end{thm}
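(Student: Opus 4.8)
The plan is to push everything through Proposition \ref{G must be const}: a cylindrical rotating drop with $\Lambda_0=1$ and $a\ne0$ is the same data as a constant $C$ together with a connected component of the level set $\{G=C\}$, and this correspondence is made explicit by the map $\rho$ of Definition \ref{def of rho}. By Remark \ref{xi1positive} and the substitution $r=\xi_1^2+\xi_2^2$, the part of a component of $\{G=C\}$ lying in $\{\xi_1\ge0\}$ is exactly $\rho([r',r''])$ for a maximal interval on which $q(r,a,C)\ge0$; conversely every interval $[r',r'']$ with $q(r')=q(r'')=0$ and $q>0$ on $(r',r'')$ yields one component of $\{G=C\}$ which is a regular closed curve (Remark \ref{prop of rho}), while a root of $q$ of even multiplicity flanked on both sides by $\{q<0\}$ yields an isolated point of $\{G=C\}$ on the $\xi_2$-axis. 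Since $q(0)=-16C^2\le0$ and the leading coefficient $-a^2$ is negative, the sign pattern of $q$ on $\{r\ge0\}$, hence the number and type of the components of $\{G=C\}$, is determined entirely by the multiplicities of the roots of $q$, and those are precisely what Lemma \ref{roots} tabulates. The whole theorem is then obtained by reading that table region by region and, for each component, deciding whether it gives a drop with a finite fundamental piece, a circular drop, or an exceptional drop.

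Next I would settle the two degenerate situations. An isolated point of $\{G=C\}$ lies on the $\xi_2$-axis, so $\xi_1\equiv0$ along the corresponding solution of (\ref{dz}); this forces $\kappa$ to be constant, i.e. the profile curve is a circle, and writing the point as $(0,\pm R)$ with $r_0:=R^2$ a multiple root of $q$, the identity $q(r_0,C_0)=64\big(r_0-4/(ar_0-2)^2\big)$ from the proof of Lemma \ref{roots} gives $aR^3-2R=\pm2$, so by Proposition \ref{round} this is exactly the circular cylinder of radius $R$. On $\beta_1$, where $q$ has the single (double) root $r_1$ and is negative elsewhere, this yields part iii; the same remark supplies the circular drop appearing in parts iv, v and vi, of radii $|R_1|$, $|R_2|$, $R_3$ and $\tfrac32$ as appropriate. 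For a component which is a genuine regular closed curve, with $q>0$ on $(r',r'')$, the arclength of its fundamental piece equals $\int_{r'}^{r''}8\,dr/\sqrt{q(r,a,C)}$ by (\ref{ds}), and when both $r'$ and $r''$ are simple roots the integrand has only integrable $|r-r^\ast|^{-1/2}$ singularities, so the fundamental piece is finite. Combining this with the two-simple-root count in the interior of $\Omega$, the four-simple-root count in $\Omega_2$, and the root structure on $\beta_2$ (a double root $r_2$ bordered by $\{q<0\}$, plus two simple roots bounding one sign-interval) gives parts i, ii and iv.

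The exceptional drops come from a root $r^\ast$ of $q$ of multiplicity $\ge2$ bounding a sign-interval $[r',r'']$: this occurs on $\beta_3$, where $r_3$ is a double root with $\{q>0\}$ on both sides, so that $(0,\pm R_3)$ is a saddle of the Hamiltonian $G/2$ and, besides the constant circular solution there, the two arcs of $\{G=C_3\}$ issuing from the saddle sweep out two non-closing profile curves; and at $(a,C)=(\tfrac{8}{27},-\tfrac98)$, where $q$ has a triple root at $\tfrac94$ and a simple root at $\tfrac{81}4$, giving one such non-closing curve. In this situation I would use (\ref{dttheta}): its numerator $4C+ar^2-4r$ equals $8\xi_2$, which tends to $8\xi_2(r^\ast)=\pm8R\ne0$, while $\sqrt{q}$ vanishes to order $\ge1$ at $r^\ast$, so the integral $\Delta\tilde\theta=\int_{r'}^{r''}(4C+ar^2-4r)/(r\sqrt{q})\,dr$ of (\ref{change}) diverges; since $\rho(r)\to(0,\pm R)$ as $r\to r^\ast$, the profile curve winds around the origin infinitely often and spirals onto the circle of radius $R$, which is exactly the exceptional behaviour, yielding the two drops of part v and the one of part vi. Part vii is then the exhaustiveness of Lemma \ref{roots}: the ranges $a<0$, $0<a<\tfrac{8}{27}$, $a=\tfrac{8}{27}$, $a>\tfrac{8}{27}$ cover all $a\ne0$, the table lists every possibility for $\{G=C\}$ as $C$ varies, and the pairs $(a,C)$ outside $\Omega\cup\Omega_2\cup\beta_1\cup\beta_2\cup\beta_3\cup\{(\tfrac{8}{27},-\tfrac98)\}$ are exactly those with $\{G=C\}=\emptyset$; together with the rescaling and orientation reduction stated just before this subsection (and the identification of $a=0$ with the helicoidal constant mean curvature case of \cite{P1}) this accounts for every cylindrical drop. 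Part viii is the dichotomy already described in the text preceding equation (\ref{change}): the profile curve of a non-exceptional drop is the union of the images of one fundamental piece under the rotations by integer multiples of $\Delta\tilde\theta(a,C)$, hence a closed properly immersed curve with $\mathbb{Z}_m$ symmetry when $\Delta\tilde\theta(a,C)/\pi=n/m$ in lowest terms, and dense in the annulus $R_1\le|(x,y)|\le R_2$ determined by the extrema of $|TS(\alpha)|$ otherwise; the detailed density argument is identical to the proof of Theorem 7.2 of \cite{P1}.

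The step I expect to be the real obstacle is the local analysis at the multiple roots used in the first and third paragraphs: correctly distinguishing a double root surrounded by $\{q<0\}$ (an isolated point of $\{G=C\}$, hence an honest circular drop) from a double root with $\{q>0\}$ on both sides (a saddle of $G/2$ whose two separatrix loops are exceptional drops) and from the triple root at $(\tfrac{8}{27},-\tfrac98)$, and then proving rigorously that such an endpoint root forces the profile curve to spiral onto a circular limit cycle rather than to close up. Once this behaviour at the degenerate roots is established, everything else is bookkeeping with the table of Lemma \ref{roots}, Proposition \ref{round}, and the citation to \cite{P1}.
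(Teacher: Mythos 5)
Your proposal is correct and follows essentially the same route as the paper: reduce to level sets of $G$ via Proposition \ref{G must be const}, parametrize them by $\rho$, and read off the structure of each component of $\{G=C\}$ from the root multiplicities tabulated in Lemma \ref{roots}, treating simple-root intervals as finite fundamental pieces, isolated double roots as circles, and double roots with $q>0$ on both sides (or the triple root) as exceptional spiralling drops. The only differences are local and cosmetic: you identify the isolated-point components with circles via the identity $q(r_0,C_0)=64\big(r_0-4/(ar_0-2)^2\big)$ together with Proposition \ref{round}, where the paper verifies an explicit circular parametrization directly, and you obtain the infinite winding of the exceptional drops from divergence of the integral defining $\Delta\tilde\theta$ rather than from divergence of the length integral combined with eventual monotonicity of the distance to the origin.
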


\begin{proof}

We already know that the TreadmillSled of the profile curve of any rotating drop satisfies the equation

 $$G(\xi_1,\xi_2)=  2 \xi_2 +  (\xi_1^2+\xi_2^2) -\frac{a}{4} (\xi_1^2+\xi_2^2)^2  =C\:.$$

 Figure \ref{TSofcyl}  How the TreadmillSled of a cylindrical drop generates the level set of $G$.

\begin{figure}[ht]
\centerline{\includegraphics[width=4cm,height=3cm]{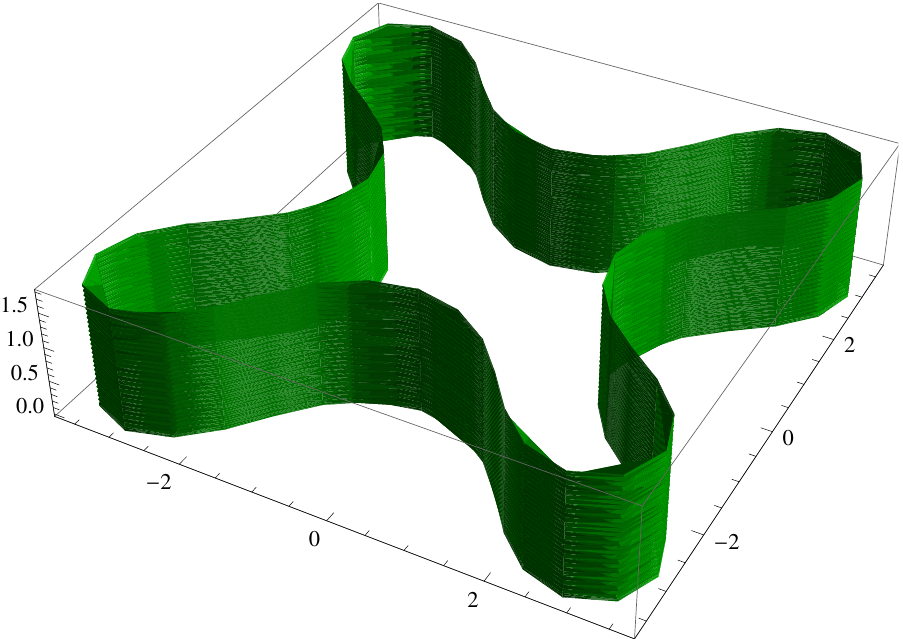}\hskip.2cm \includegraphics[width=4cm,height=4cm]{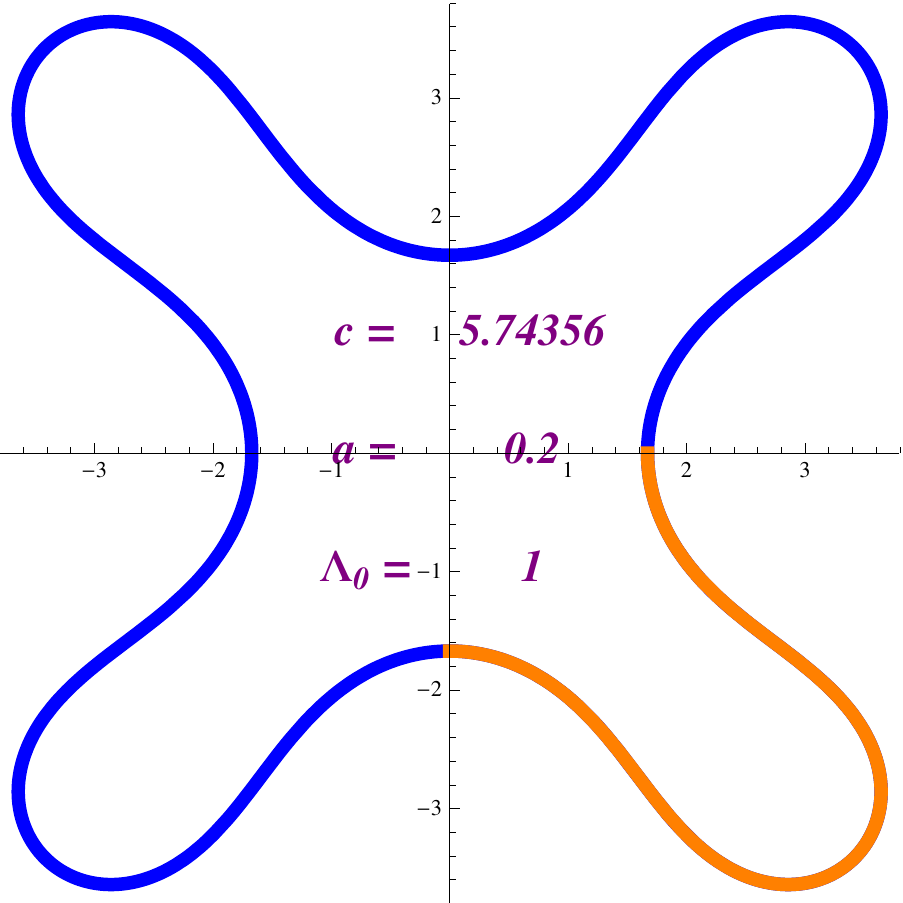}\hskip.2cm\includegraphics[width=3.5cm,height=3.5cm]{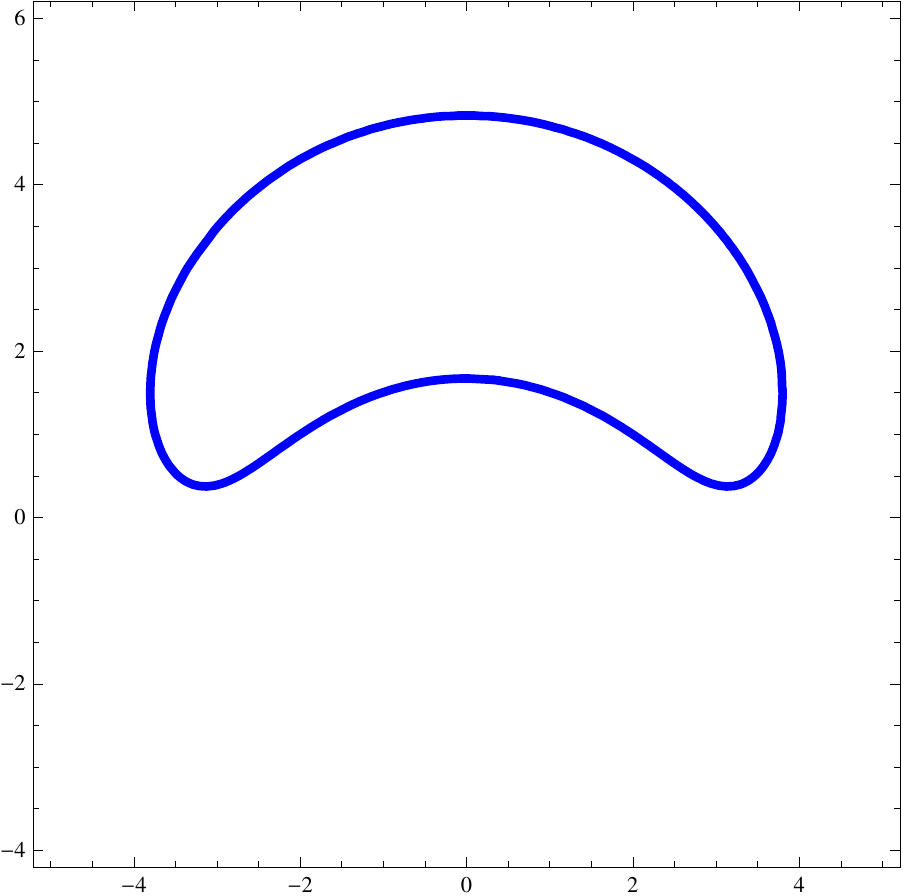}\hskip.2cm\includegraphics[width=3.5cm,height=3.5cm]{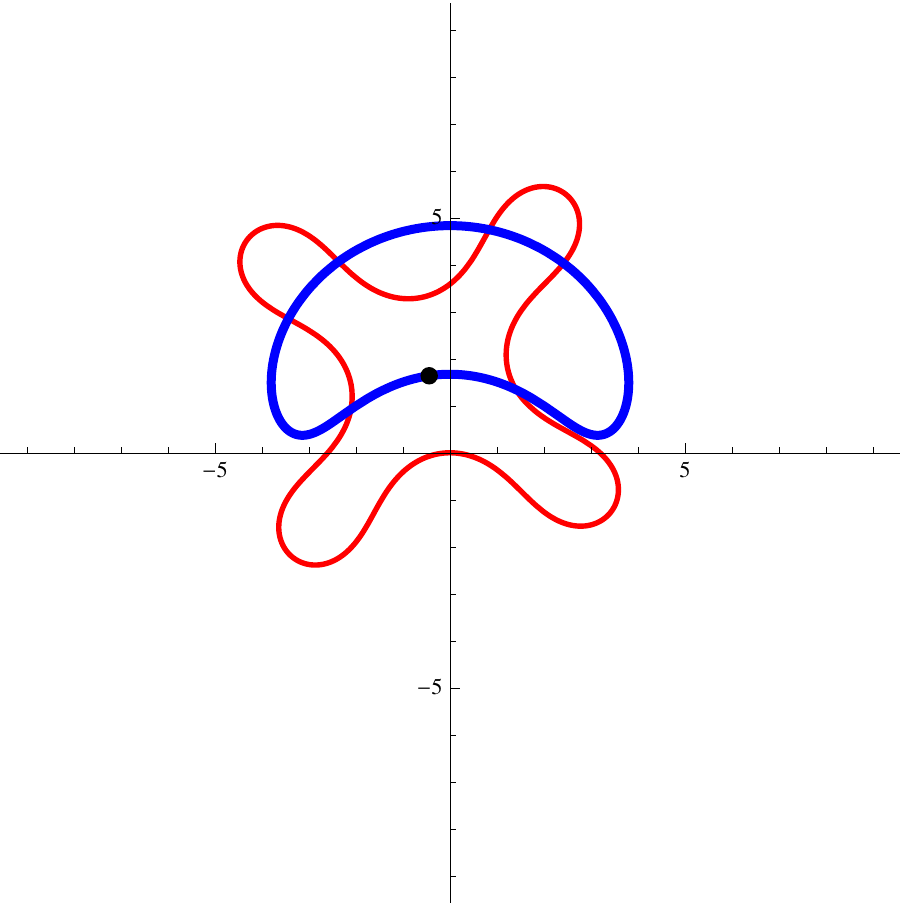}}
\caption{ The first picture shows a cylindrical rotating drop, the second picture shows its profile curve emphasizing the fundamental piece, the third picture show the level set $G=C$ and the last picture shows how the TreadmillSled of the profile curve produces the level set $G=C$, in this particular example the TreadmillSled of the profile curve will go over the level set $G=C$ four times.}
\label{TSofcyl}
\end{figure}

We have that up to rigid motions, the TreadmillSled of a curve determine the curve, see \cite{P3}.  Since any level sets of $G$  can be parametrized using the map $\rho$ given in Definition \ref{def of rho}, and since every parametrization of $G$ is defined for values of $r$ where the polynomial $q$ is positive, it follows from Lemma \ref{roots} that every cylindrical rotating drops can be represented as one of the cases i.) through vi.) above. It is worthwhile to recall, see Remark \ref{prop of rho}, that the parametrization $\rho$ only covers half of the level set of  $G$. Each one of these level sets is symmetric with respect to the $\xi_2$ axis, and the parametrization $\rho$ covers the half on the right.
The cylinder shown in Figure \ref{TSofcyl} correspond with the value $C=5.74356$ and $a=0.2$. For these values of $C$ and $a$ the roots of the polynomial $q$ are approximately $x_1=2.791596$ and $x_2=23.35858$. Figure \ref{Graph of rho} shows the graph of the polynomial $q$ and the part of the level set parametrized by $\rho$ when $\rho$ moves form one root of $q$ to the next.

\begin{figure}[ht]
\centerline{\includegraphics[width=4cm,height=3cm]{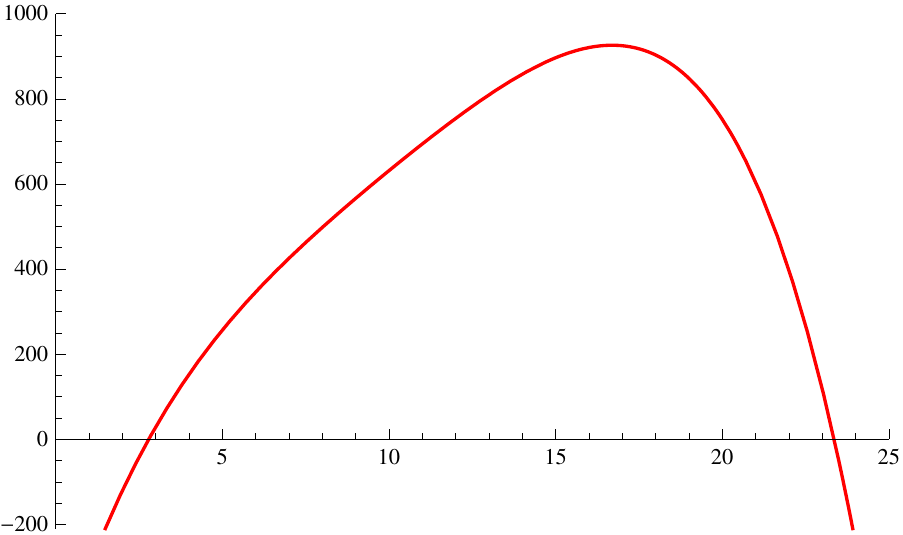}\hskip2cm \includegraphics[width=4cm,height=4cm]{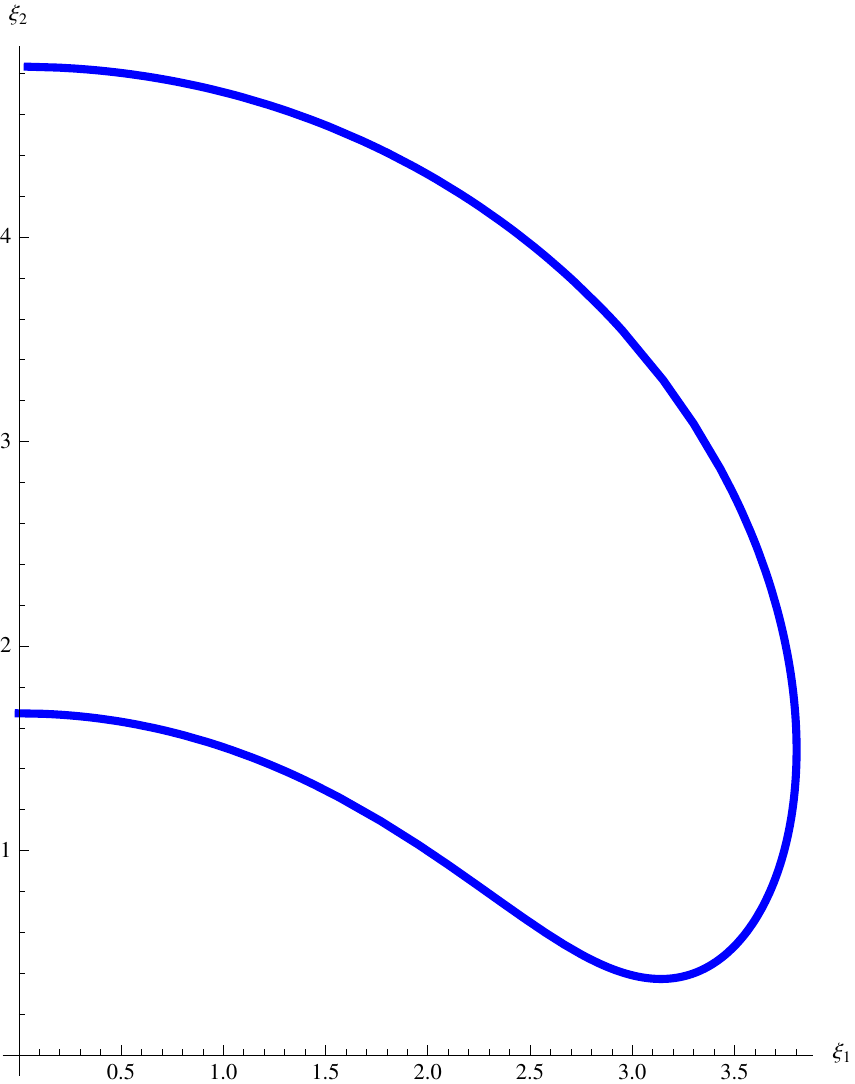}}
\caption{ The first picture shows the graph of the polynomial $q$ and the second picture shows the part of the level set parametrized by $\rho$ when $r$ moves from one root of $q$ to the other. Notice how the points in the fundamental piece whose TreadmillSled are the
initial and final point of the parametrization $\rho$ are the smallest and largest value of the distance to the origin function $R$ for the fundamental piece. We also have that the number of critical points of the function $R$ along a fundamental piece are either $2$ if the fundamental piece is a closed curve or $3$ otherwise. The reason is that critical point of $R$ correspond with values of $\xi_1=0$ in the level sets of $G$.   }
\label{Graph of rho}
\end{figure}

Notice that when the profile curve is a circle, the level set $G=C$ reduces to a point. It is easy to see that the TreadmillSled of a circle is a point. When the profile curve is a circle, we will take the parametrization $\rho$ to be defined just in a point (a root with multiplicty 2 of the polynomial $q$) and not in an interval.

When Case (i) occurs, $q$ has  only  two simple roots $x_1$ and $x_2$. The graph of $q$ looks like the one shown in Figure \ref{Graph of rho}. We can check that if the derivative of $q$ at $x_1$ is positive while the derivative of $q$ at $x_2$ is negative, then the length of the fundamental piece,  according to Equation (\ref{ds}), reduces to the integral $\int_{x_1}^{x_2}\frac{8\, dr}{\sqrt{q(r,a)}}$, which is convergent since the roots are simple . Therefore the length of the fundamental piece is finite.

For values of $C$ and $a$ that falls into  case (ii) the polynomial q has 4 roots $x_1<x_2<x_3<x_4$ and it is positive on the intervals  $(x_1, x_2)$  and   $(x_3, x_4)$. Also, the level set of $G$ has two connected components. Half of each connected component of $G=C$ can be parametrized using the map $\rho$, one half of the connected component of $G=C$ uses the domain $(x_1,x_2)$ for $\rho$ and the half of the other connected component of $G=C$ uses the domain $(x_3,x_4)$ for $\rho$. For case (ii), the graph of the polynomial $q$ and the level set of $G$ looks like those shown in  Figure \ref{poly 2}. Figure \ref{pc2a} and Figure \ref{pc2b} shows images of the fundamental pieces of the cylindrical drops for the values $C=0$ and $a=0.1$. These two values for $a$ and $C$ falls into case (ii). The proof that that the length of the fundamental piece of each surface is finite follow the lines of the proof in case (i).

 \begin{figure}[ht]
\centerline{\includegraphics[width=4cm,height=3cm]{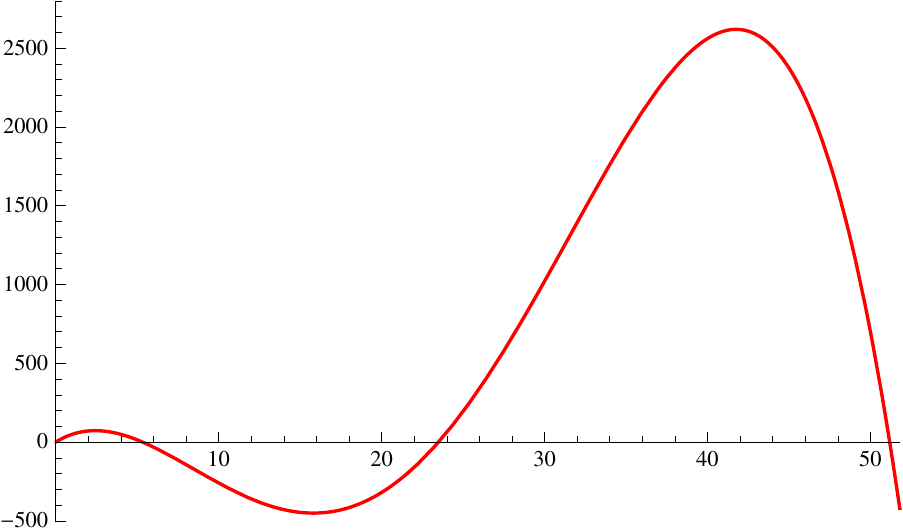}\hskip2cm \includegraphics[width=4cm,height=4cm]{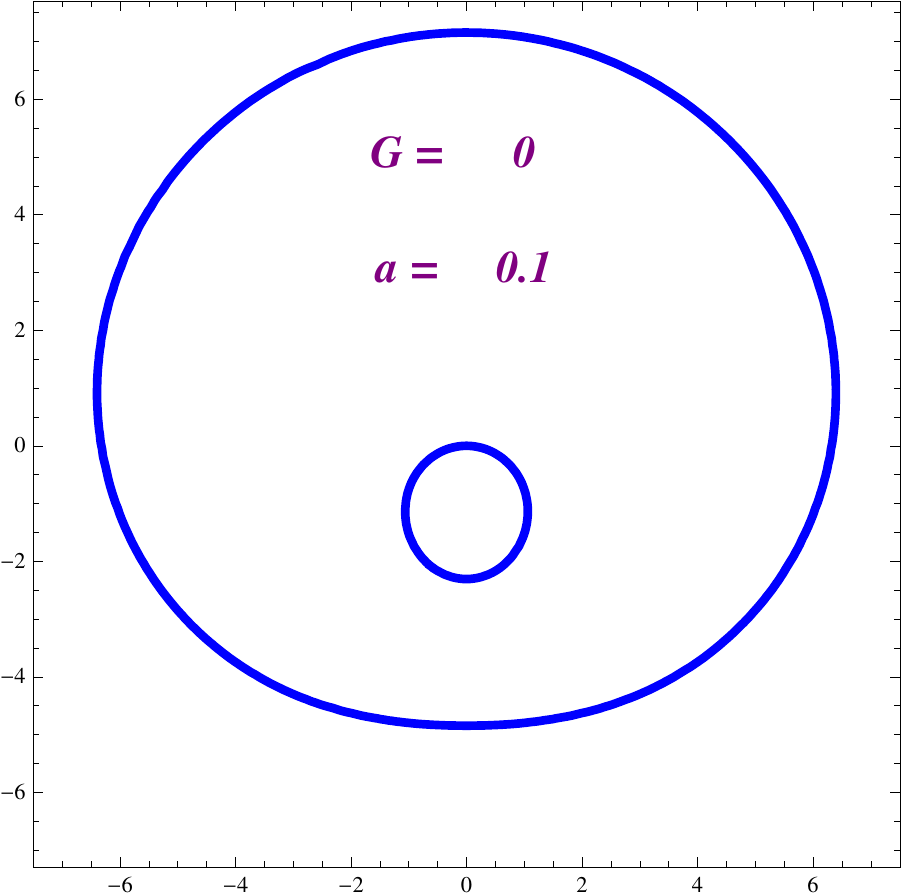}}
\caption{ The following graph shows the polynomial $q$ and the level set of $G$ when $c=0$ and $a=0.1$ . }
\label{poly 2}
\end{figure}

 \begin{figure}[ht]
\centerline{\includegraphics[width=4cm,height=3cm]{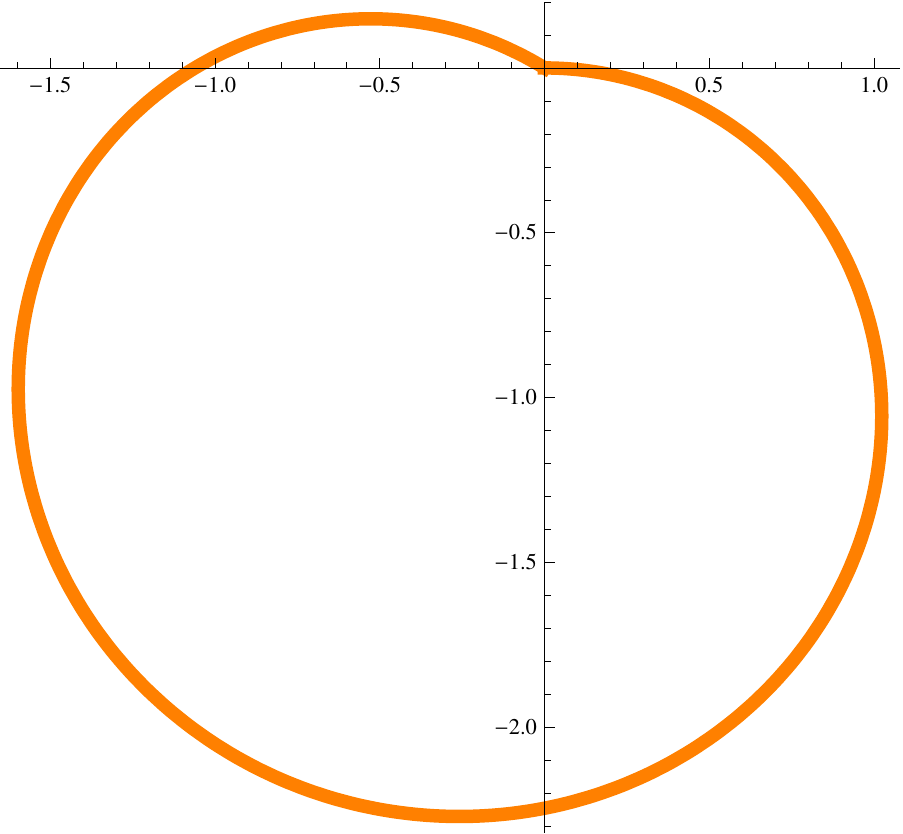}\hskip.2cm \includegraphics[width=4cm,height=4cm]{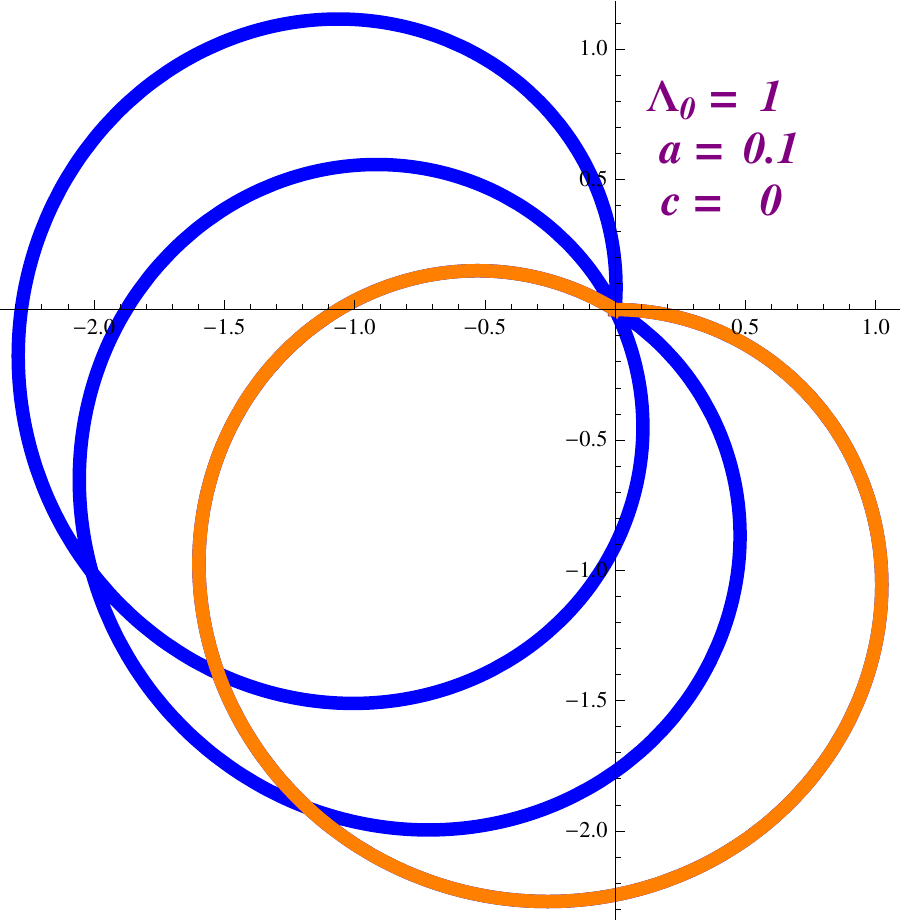}\hskip.2cm\includegraphics[width=3.5cm,height=3.5cm]{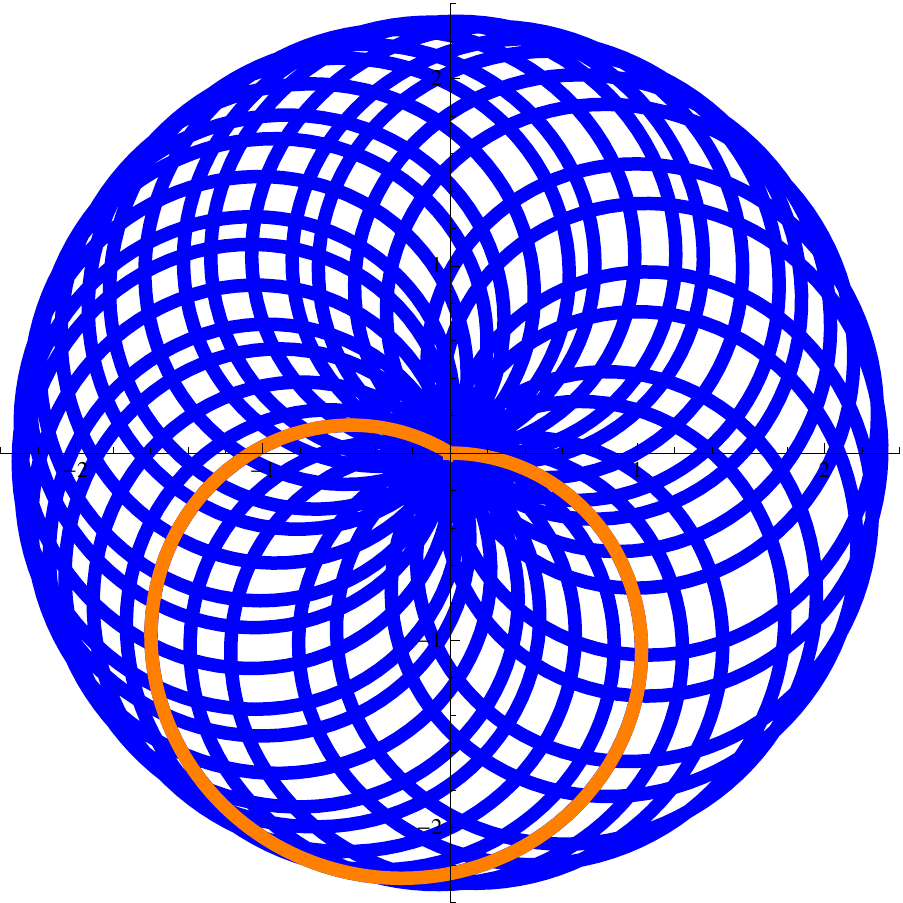}\hskip.2cm\includegraphics[width=3.5cm,height=3.5cm]{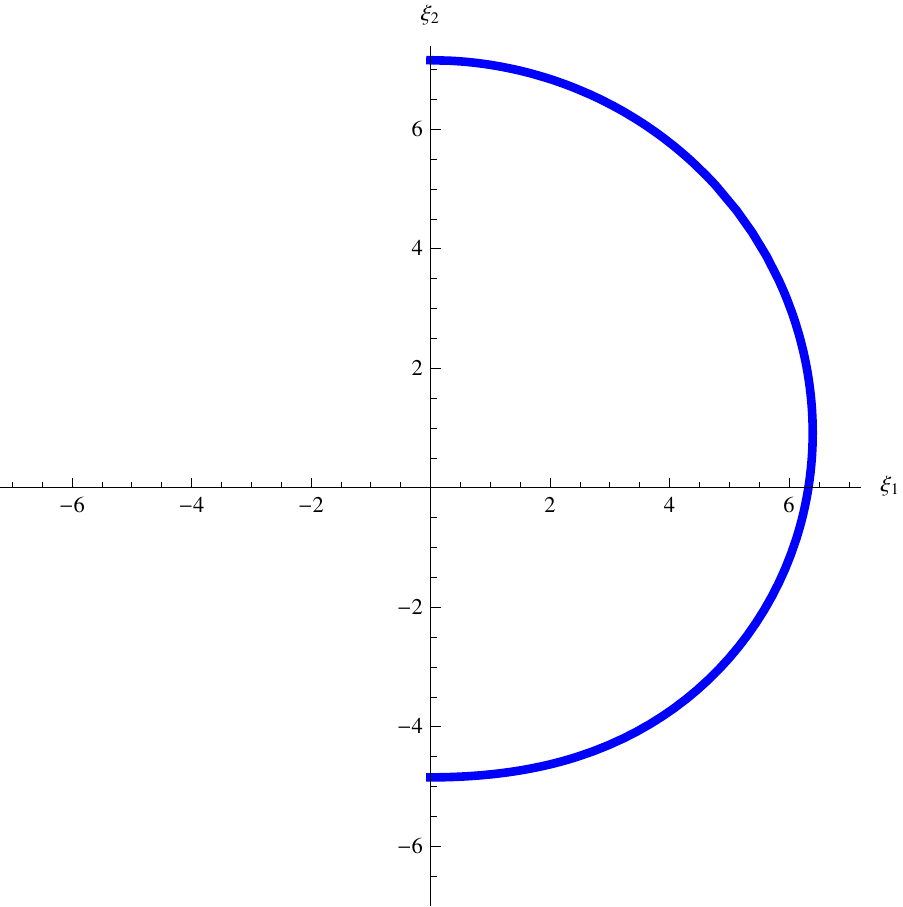}}
\caption{ When $C=0$ and $a=0.1$ there are two cylindrical drops, these images corresponds to one of them. The last picture shows half of the TreadmillSled produced by the profile curve which is shown in the previous three pictures. This last picture is  parametrized by the map $\rho$ when $r$ moves from the first root of $q$  (in this case is $x_1=0$)  and the second root of $q$. The first picture shows the fundamental piece of the cylindrical surface, the second one shows the part of the profile curve made out of  3 copies of the fundamental piece and the third picture shows the part of the profile curve made out of  30 copies of the fundamental piece.}
\label{pc2a}
\end{figure}

 \begin{figure}[ht]
\centerline{\includegraphics[width=4cm,height=3cm]{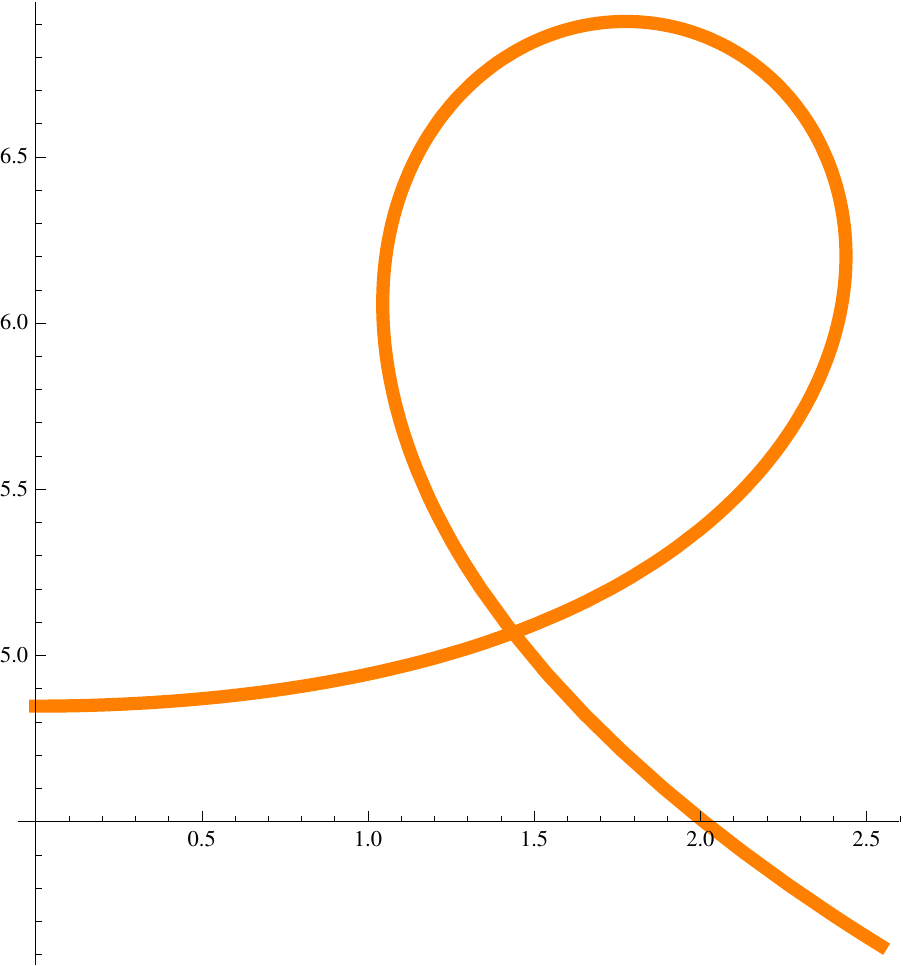}\hskip.2cm \includegraphics[width=4cm,height=4cm]{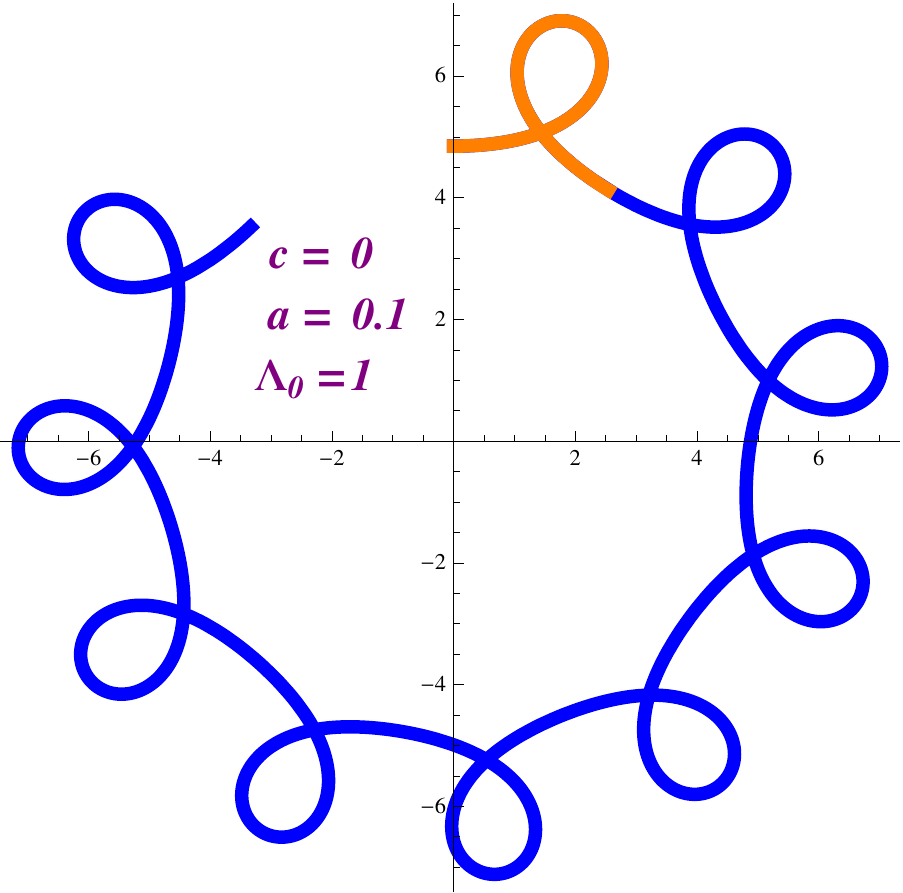}\hskip.2cm\includegraphics[width=3.5cm,height=3.5cm]{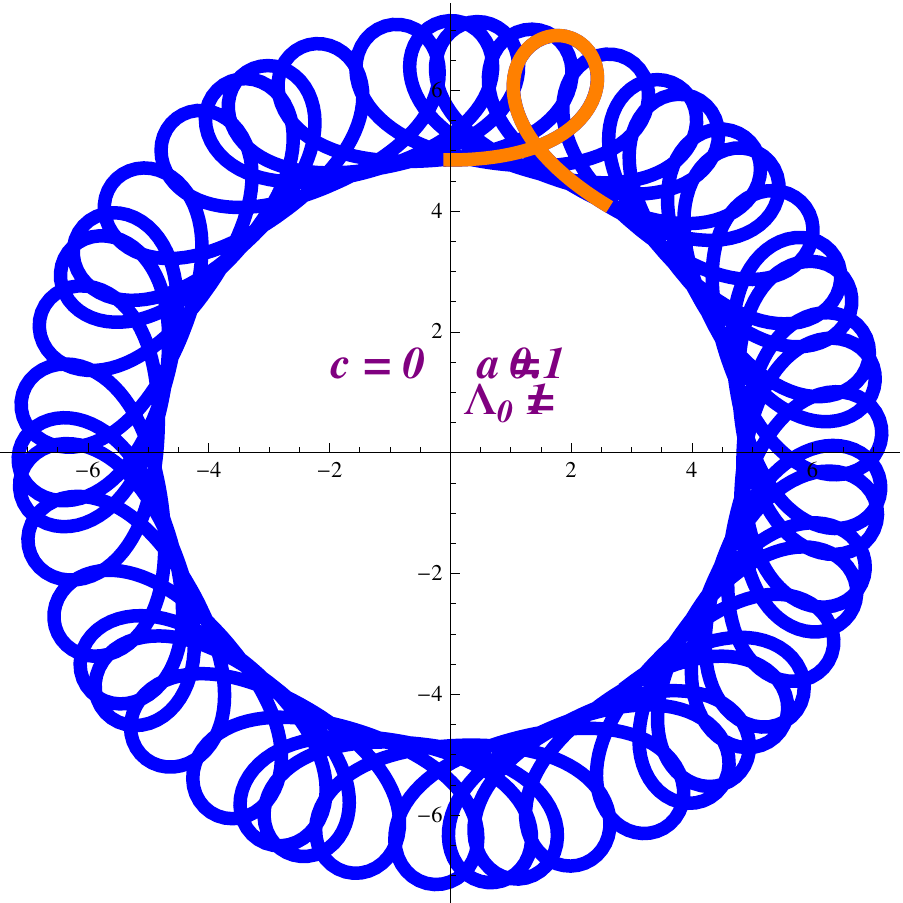}\hskip.2cm\includegraphics[width=3.5cm,height=3.5cm]{rho2a-eps-converted-to.pdf}}
\caption{ When $C=0$ and $a=0.1$ there are two cylindrical drops, these images corresponds to one of them. The last picture shows half of the TreadmillSled produced by the profile curve which is shown in the previous three pictures. This last picture is  parametrized by the map $\rho$ when $r$ moves from the third root of $q$ and the fourth root of $q$. The first picture shows the fundamental piece of the cylindrical surface, the second one shows the part of the profile curve made out of  10 copies of the fundamental piece and the third picture shows the part of the profile curve made out of  50 copies of the fundamental piece.}

\label{pc2b}
\end{figure}

 For values of $(a,C)$ that satisfies the case (iii),  the polynomial $q$ has only one root $x_1$ with multiplicity two. Let us take $R=\sqrt{x_1}$.  A direct verification shows that if $a>0$, then $R_1(a)=-R$ and if we consider the profile curve $\alpha(s)=(R \sin(\frac{s}{R}),-R\cos(\frac{s}{R}))$, then $\xi_1=0$, $\xi_2=R$ and $G(\xi_1,\xi_2)= 2R+\Lambda_0R^2-\frac{a}{4}R^4$. Using the definition of $C_1$  and the fact that $R_1(a)=-R$, we can check that the expression $G(\xi_1, \xi_2)$ reduces to $C=C_1(a)$, which was our goal in order to show that the point $(a,C)$ represents a round cylinder.  Similarly, a direct verification shows that  if $a<0$, then $R_1(a)=R$ and if we consider the profile curve $\alpha(s)=(R \sin(\frac{s}{R}),R\cos(\frac{s}{R}))$, then $\xi_1=0$, $\xi_2=-R$ and $G(\xi_1,\xi_2)= -2R+\Lambda_0R^2-\frac{a}{4}R^4$. Using the definition of $C_1$  and the fact that $R_1(a)=R$, we can check that the expression $G(\xi_1, \xi_2)$ reduces to $C=C_1(a)$.  This finish the proof of part (iii). Figure \ref{poly 3} shows the graph of the polynomial, the round cylinder and the level set of $G=C$ when $a=-2$ and $C=C_1(-2)$. 
 
    \begin{figure}[ht]
\centerline{\includegraphics[width=4cm,height=3cm]{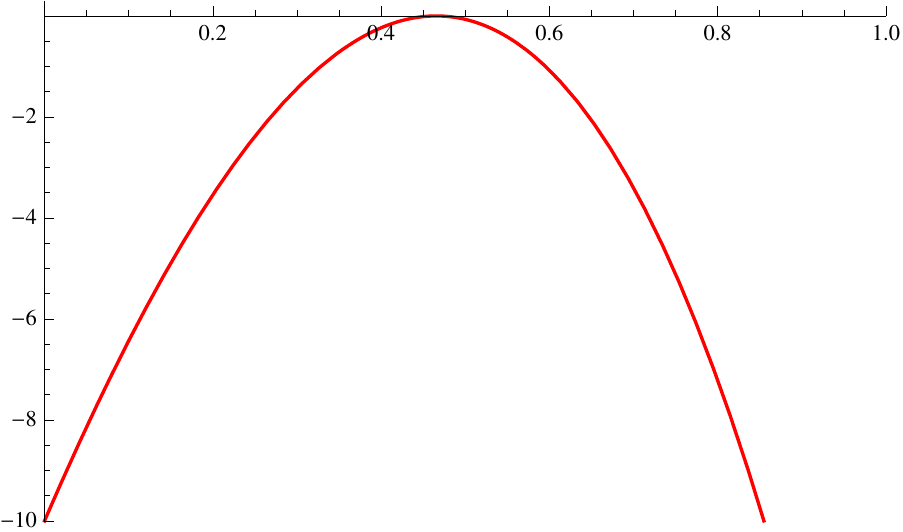}\hskip.2cm \includegraphics[width=4cm,height=4cm]{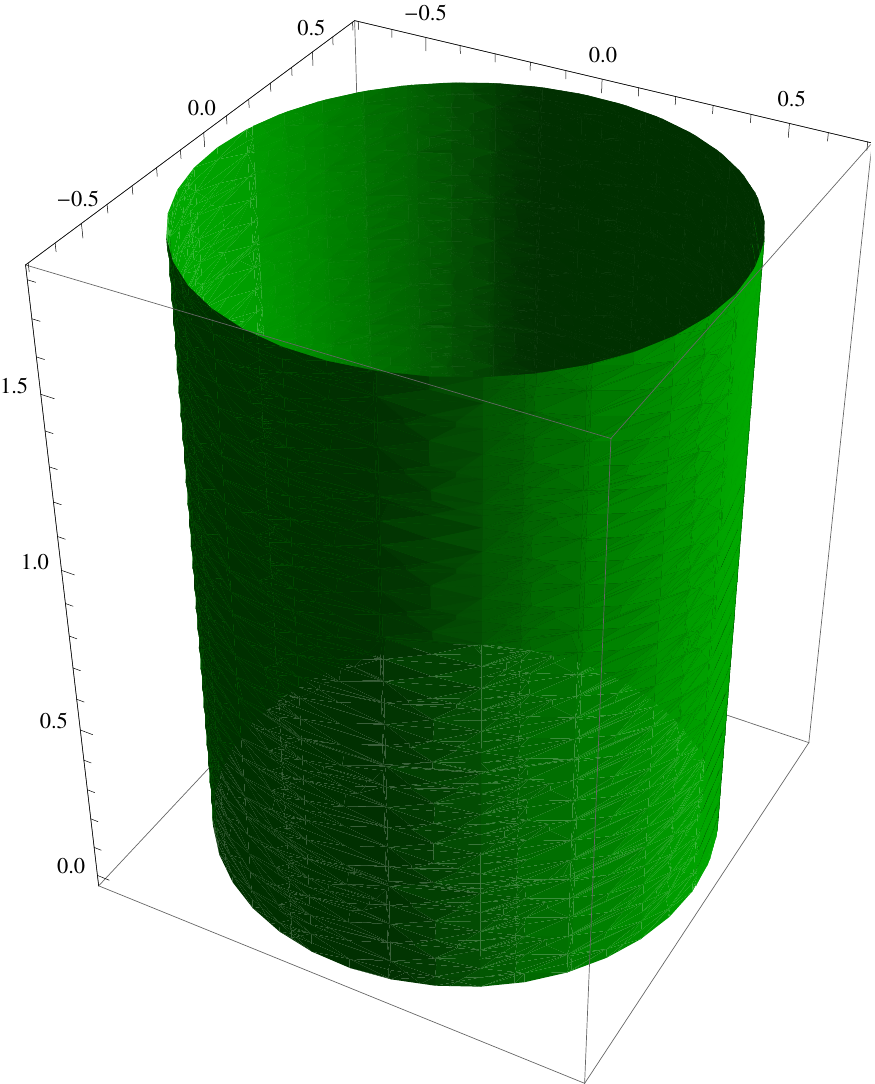}\hskip.2cm\includegraphics[width=3.5cm,height=3.5cm]{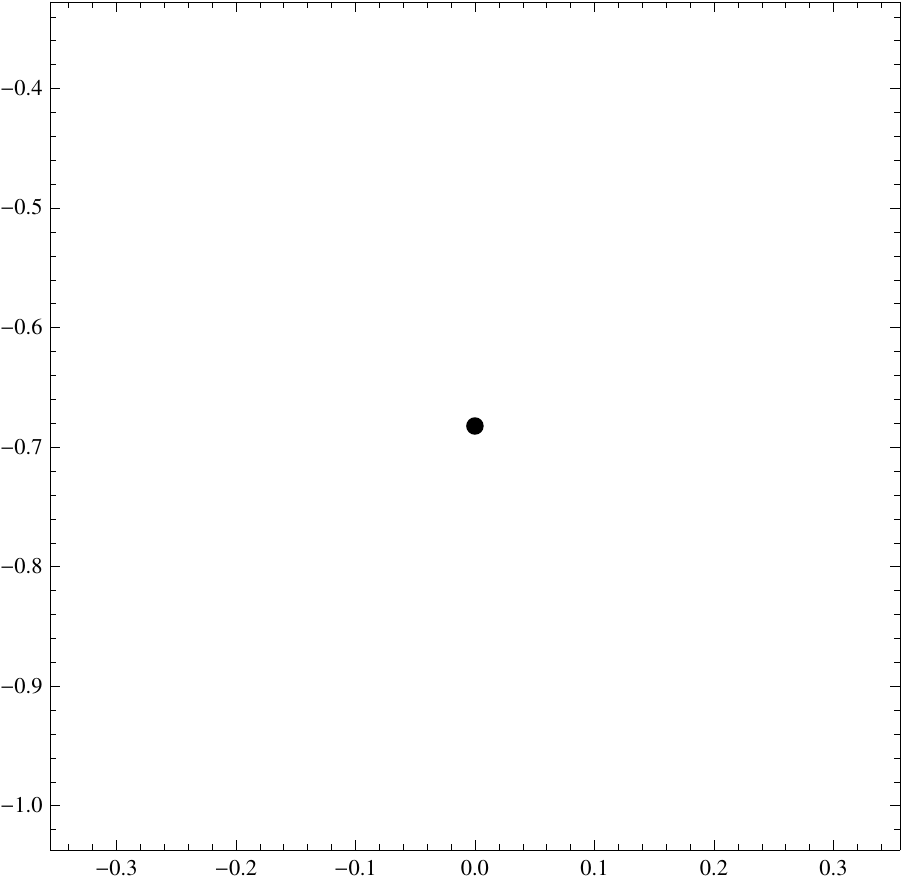}}
\caption{ When $a<0$ and $C=C_1(a)$  the round cylinder of radius $\sqrt{r_1(a)}$ parametrized so that the mean curvature is positive satisfies the rotating drop equation. The first  picture shows polynomial when $a=-2$ and $C=C_1(-2)$, the second picture shows the round cylinder that is a solution of the rotating drop equation for these values of $a$ and $C$ and the third picture shows the level set $G=C$, for this values of $a$ and $C$ the level set reduces to a point.}

\label{poly 3}
\end{figure}


 For values of $(a,C)$ that fall into  case (iv),  the polynomial $q$ has only three roots $x_1<x_2<x_3$ where $x_1$ has multiplicity two and $x_2$ and $x_3$ are simple. The polynomial $q$ is positive for values of $r$ between $x_2$ and $x_3$. In this case, the level set $G=C$ is the union of the point $(0,-\sqrt{x_1})$ and a closed curve. The point $(0,\sqrt{x_1})$ is the TreadmillSled of the profile curve of a circular cylindrical rotating drop, and the closed curve in $G=C$, which can be parametrized by map $\rho$ with  domain the interval  $(x_2,x_3)$, is the TreadmillSled of a rotational drop satisfying that the length of its fundamental piece is finite. Figure \ref{poly 4} shows a typical example of the polynomial and the level set.

  \begin{figure}[ht]
\centerline{\includegraphics[width=4cm,height=3cm]{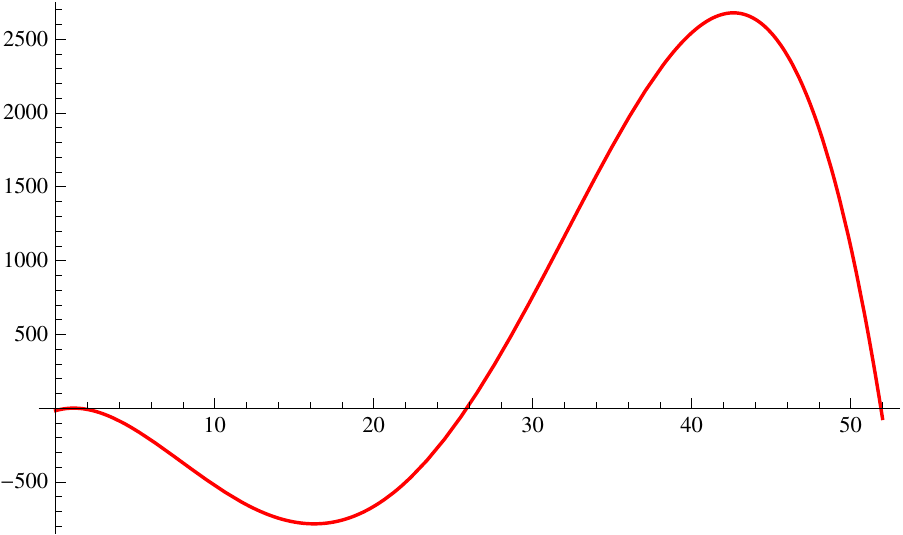}\hskip.2cm \includegraphics[width=4cm,height=4cm]{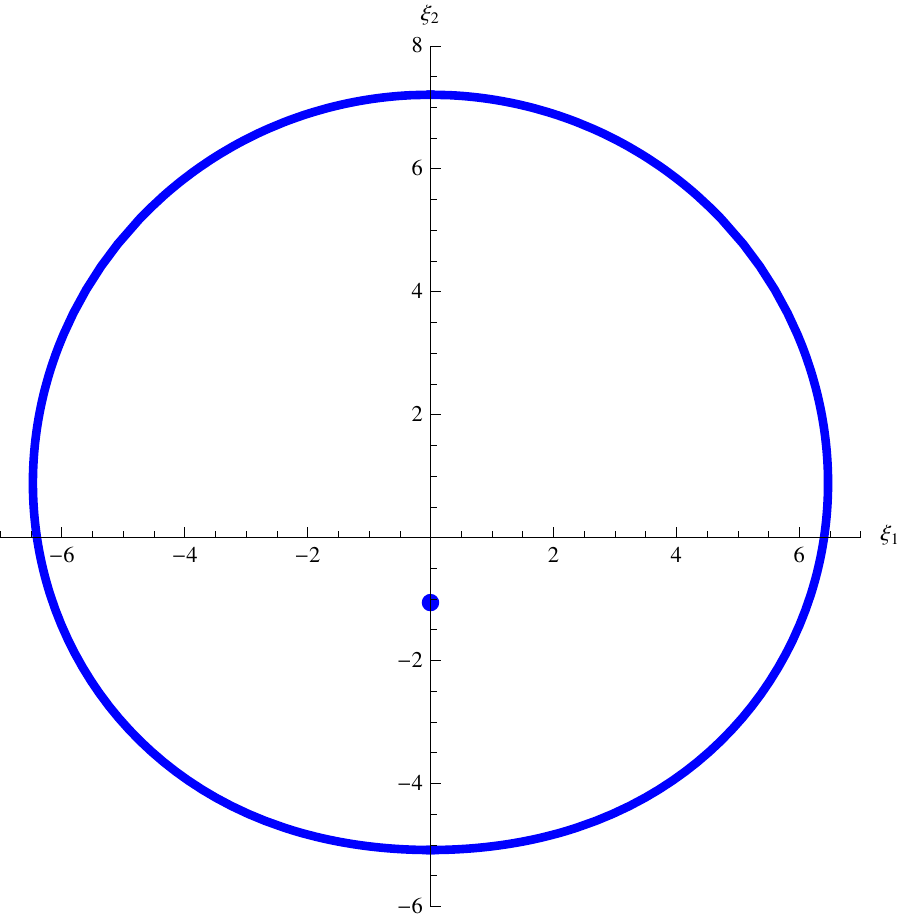}\hskip.2cm\includegraphics[width=3.5cm,height=3.5cm]{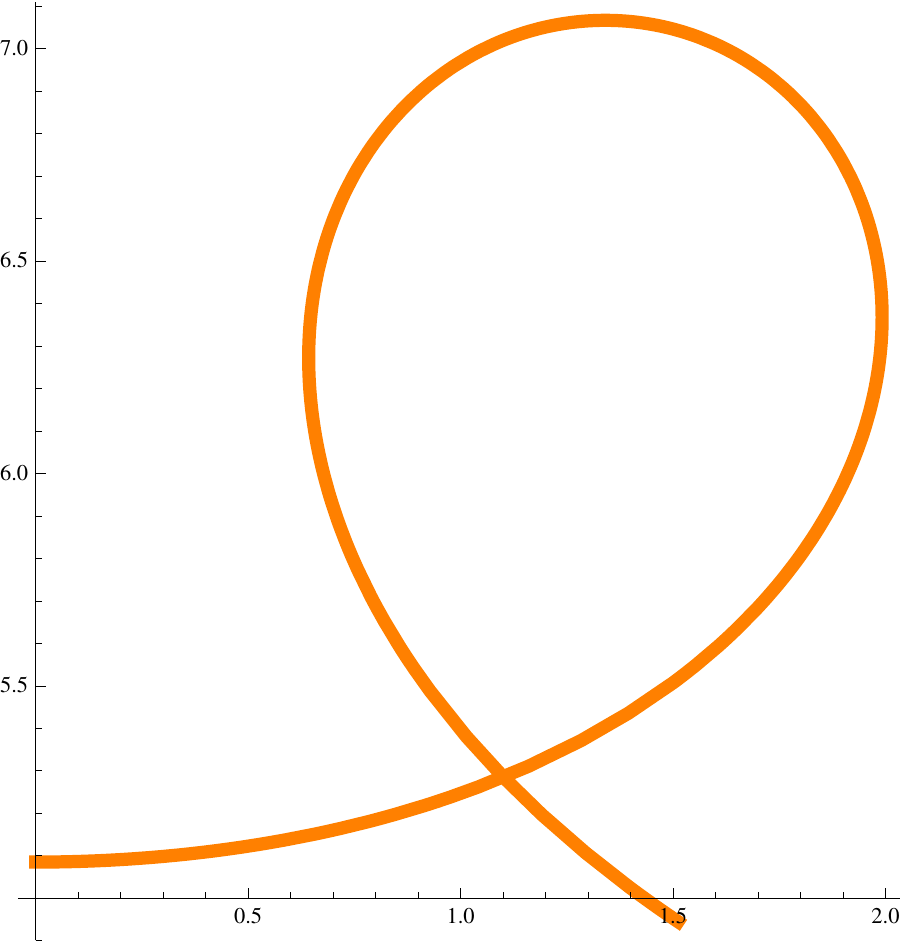},\includegraphics[width=3.5cm,height=3.5cm]{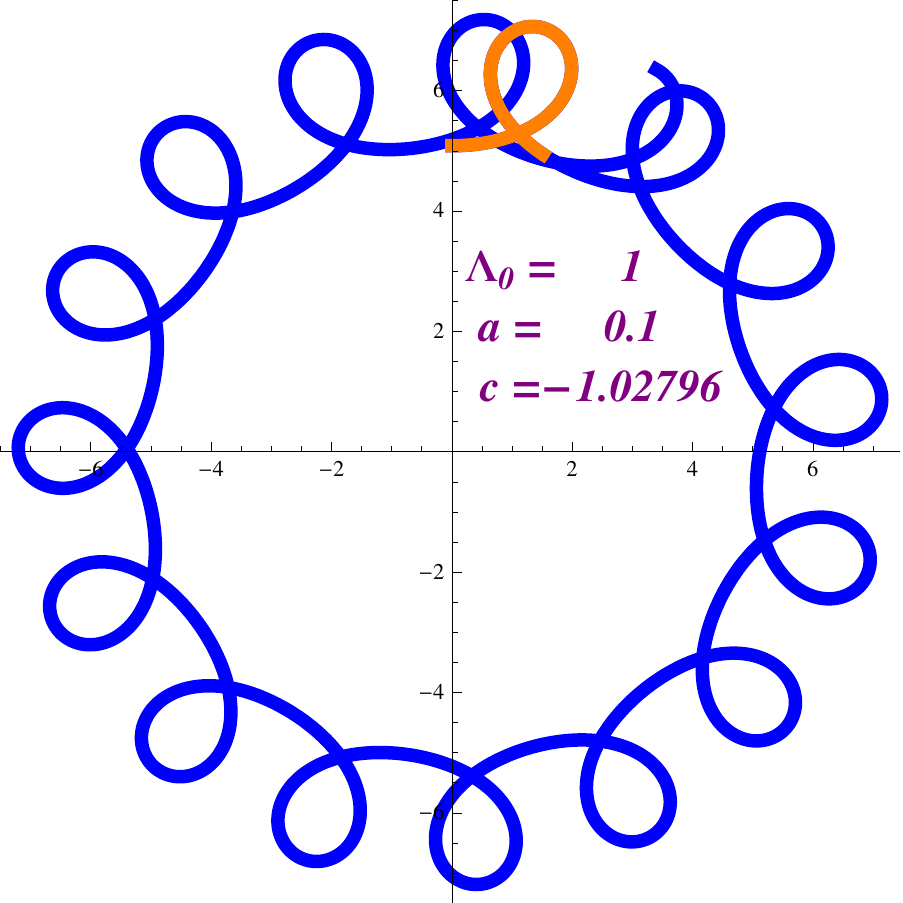}}
\caption{ When $a=0.1$ and $C=C_2(0.1)\approx -1.027962166$ there are two cylindrical drops. For these values of $a$ and $C$, the first image shows the graph of the polynomial $q$, the second image represents the level set $G=C$.  It is the union of a closed curve and a point, this point is the TreadmillSled of a circular cylindrical drop and the closed curve in the set $G=C$ is the TreadmillSled of the cylindrical drop whose profile curve is illustrated in the third and fourth image. The third image represents the fundamental piece of the non-circular rotating drop associated with these values of $a$ and $C$, and the fourth image represents the part of the profile curve made out of $16.5$ copies of the fundamental piece.}

\label{poly 4}
\end{figure}



 For values of $(a,C)$ that fall into  case (v),  the  polynomial $q$ has  three distinct  roots $x_1<x_2<x_3$ where $x_2$ has multiplicity two and $x_1$ and $x_3$ are simple. The polynomial $q$ is positive for values of $r$ between $x_1$ and $x_2$ and for values of $r$ between $x_2$ and $x_3$.   Figure \ref{poly 5} shows a typical example of the graph of the polynomial and the level set $G=C$. In this case, the level set $G=C$ is connected but it self-intersects  at the point $(0,-\sqrt{x_2})$. This point alone is the TreadmillSled of a round cylinder.
Any part of a curve that crosses the $\xi_2$-axis horizontally can be the TreadmillSled of a regular curve (see Proposition 2.11 in \cite{P3}) .

 The set $G=C$ minus the point $(0,-\sqrt{x_2})$ has two connected components. One of these connected components can be parametrized using the map $\rho$ with values of $r$ between $x_1$ and $x_2$ and the other using the map $\rho$ with values of $r$ between $x_2$ and $x_3$. Each of these connected components  is the TreamillSled of the whole profile curve  of a rotating cylindrical drop. In this case, these profile curves are not invariant under a group of rotations because the TreadmillSled of any proper subset of the profile curve is a proper subset of one of the connected components of $G=C$ minus $P$. These cylindrical rotating drops are not properly immersed because their length is not bounded due to the fact that these lengths are given by the integrals $\int_{x_1}^{x_2}\frac{8\, dr}{\sqrt{q(r,a)}}$ and $\int_{x_2}^{x_3}\frac{8\, dr}{\sqrt{q(r,a)}}$ , respectively, and, moreover, the distance to the origin function is monotone when $s$ goes to infinity, and it is  bounded. 
 Figure \ref{pc5} shows part of these two fundamental pieces.

  \begin{figure}[ht]
\centerline{\includegraphics[width=4cm,height=3.5cm]{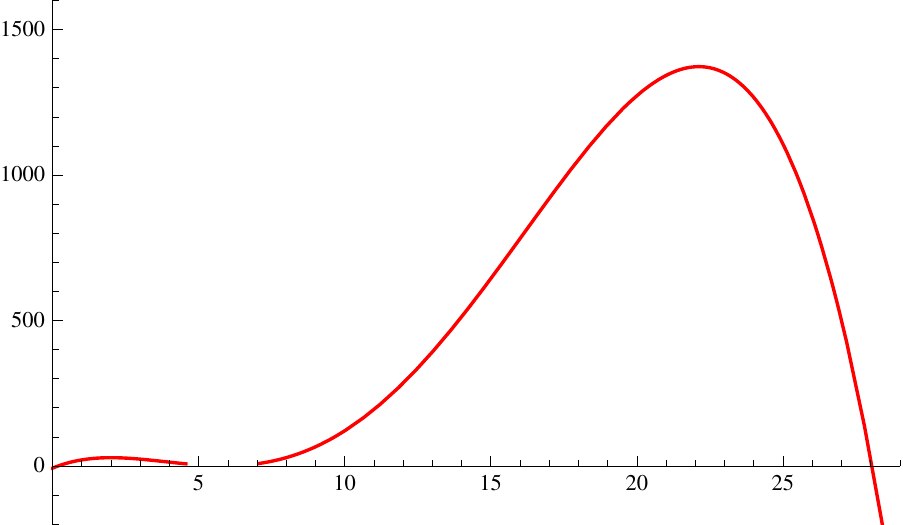}\hskip.6cm \includegraphics[width=4cm,height=4cm]{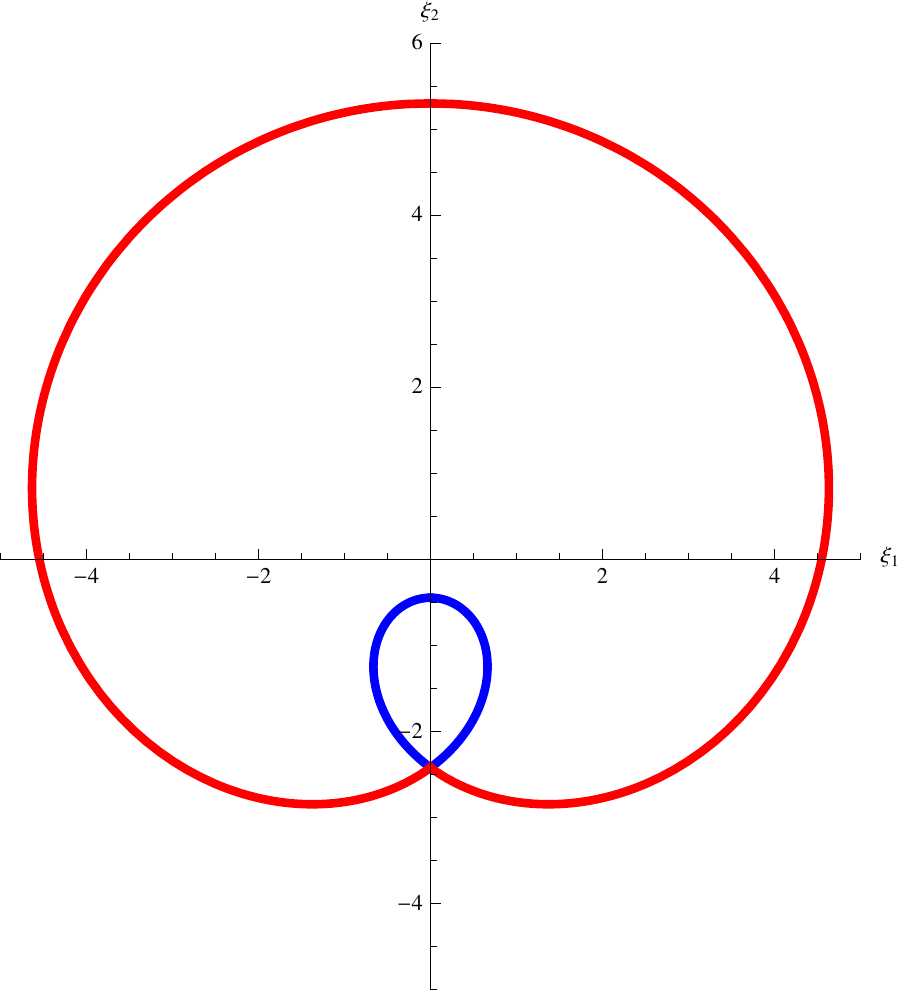}\hskip.2cm}
\caption{ When $a=0.2$ and $C=C_3(0.2)\approx -0.698461$ there are three cylindrical drops. The first  and second images are the graph of the polynomial $q$ and and the level set $G=C$.  Even though the level set is connected, it cannot be the TreadmillSled of a single curve because it is known that if the  TreasmillSled  of a smooth curves intersects the $\xi_2$-axis, then it must do it horizontally. In this case the level set should be viewed as made out of three connected pieces: The point $P$ where the level set $G=C$ self crosses, and the two connected components of $G=C$ minus $P$. Each of these two connected component determine a fundamental piece with not bounded length, and the point $P$ determines a round circle.}
\label{poly 5}
\end{figure}

  \begin{figure}[ht]
\centerline{\includegraphics[width=4cm,height=3.5cm]{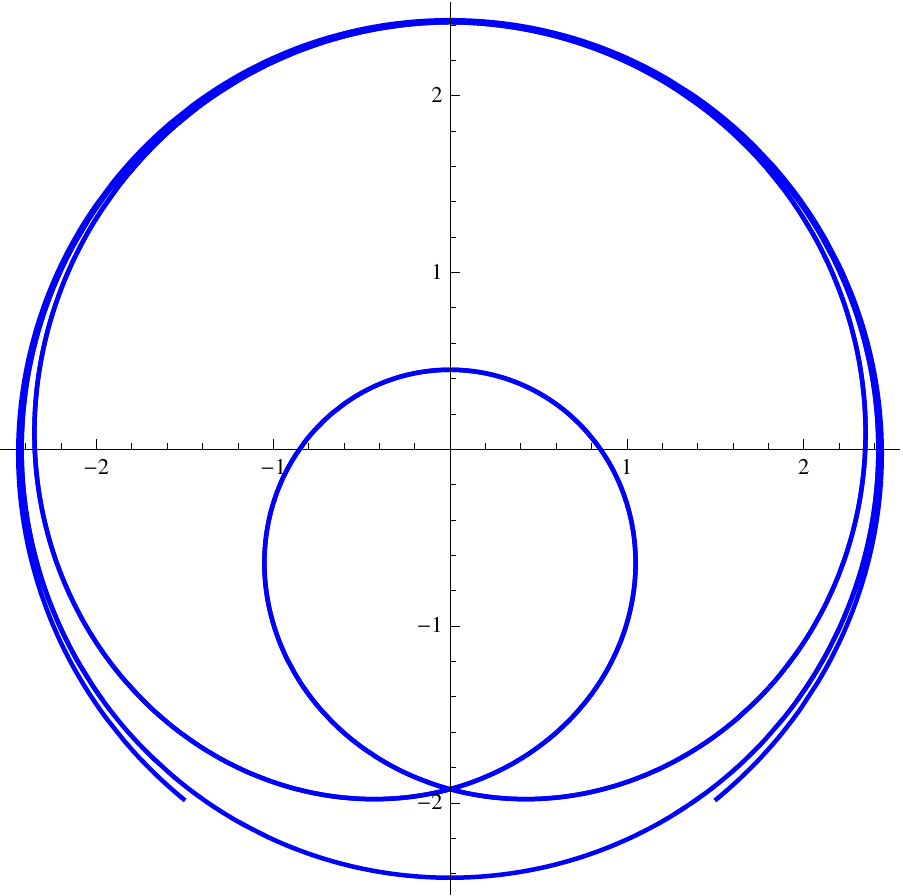}\hskip.2cm \includegraphics[width=4cm,height=4.7cm]{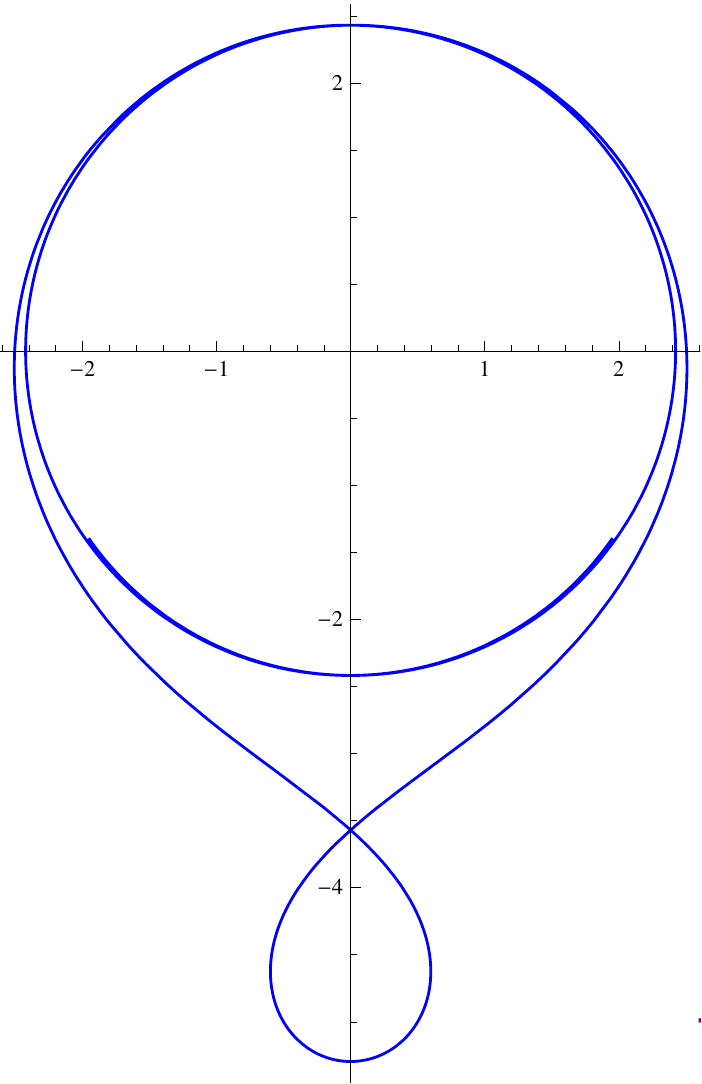}\hskip.2cm}
\caption{ When $a=0.2$ and $C=C_3(0.2)\approx -0.698461$ there are three cylindrical drops. One is circular cylinder and the other two have profile curves with unbounded length that are not invariant under a cyclic group. These last two profile curves have a circle of radius $\sqrt{x_2}\approx 2.42362$ as set of limit points. Moreover these profile curves have infinite winding number with respect to the origin. Recall that the circle of radius $\sqrt{x_2}$  is the profile curve of the other cylindrical drop associated with these values of $a$ and $C$. The first picture shows part of the profile curve which TreadmillSled is parametrized by the map $\rho$ defined with values of $r$ between the first and the second root of $q$. The second picture shows part of the profile curve which TreadmillSled is parametrized by the map $\rho$ defined with values of $r$ between the second and the third root of $q$.}
\label{pc5}
\end{figure}


 For $(a,C)=(8/27,-9/8)$, this is, for case (vi),  the the polynomial $q$ has only two roots $x_1<x_2$ where $x_1=\frac{9}{4}$ has multiplicity three and $x_2=\frac{81}{4}$ is simple. The polynomial $q$ is positive for values of $r$ between $x_1$ and $x_2$. Figure \ref{poly 6} shows the graph of the polynomial and the level set $G=C$. In this case the level set $G=C$ is connected but it has a singularity  at the point $(0,-3/2)$.  We can check that the point $(0, -3/2)$ corresponds to a  circular cylindrical rotating drop with radius $3/2$. The set $G=C$ minus the point $(0,-3/2)$ is connected and can be parametrized using the map $\rho$ with values of $r$ between $9/4$ and $81/4$. This part of the set $G=C$ is the TreamillSled of the profile curve of a rotating cylindrical drop whose length is not bounded.

  \begin{figure}[ht]
\centerline{\includegraphics[width=4cm,height=3.5cm]{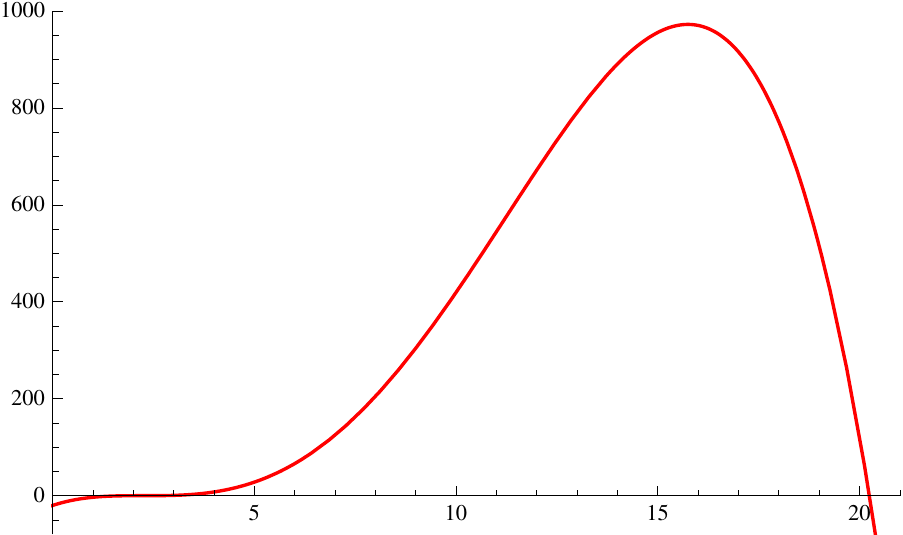}\hskip.6cm \includegraphics[width=4cm,height=4cm]{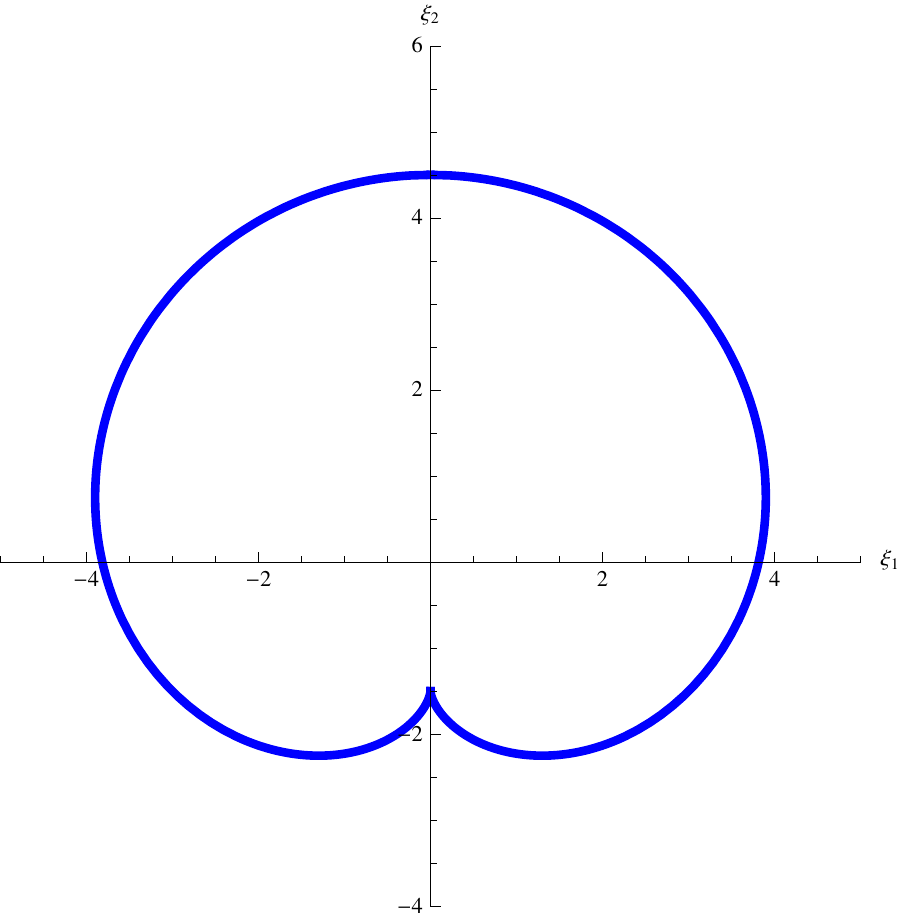}\hskip.2cm \includegraphics[width=4cm,height=4cm]{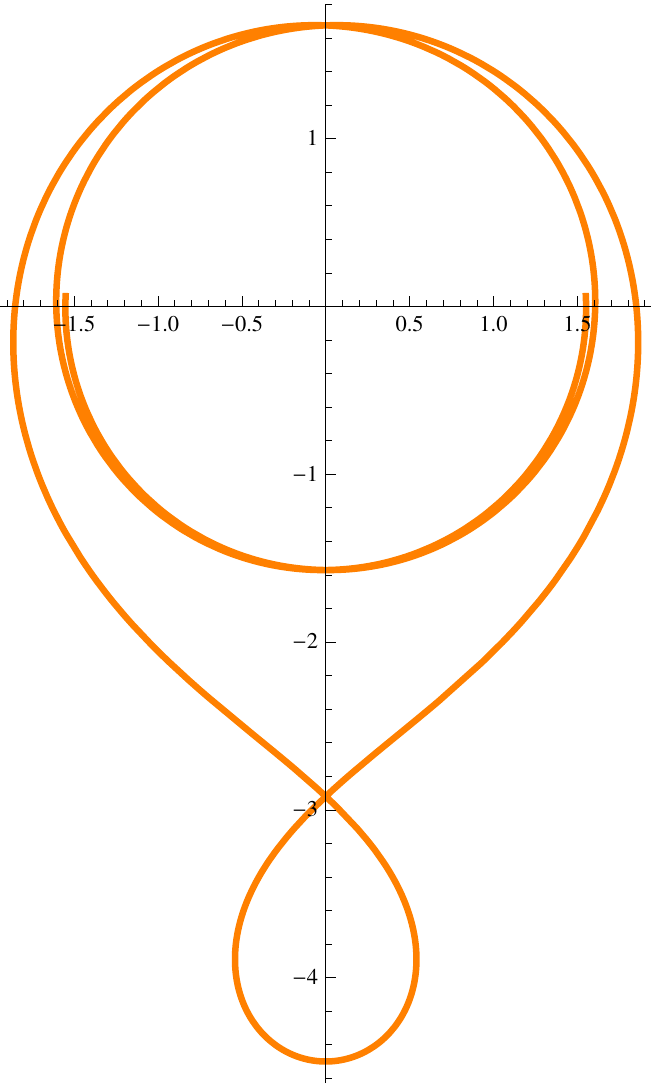}}
\caption{ When $a=\frac{8}{27}$ and $C=-\frac{9}{8}$ there are two cylindrical rotating drops, one is a round cylinder with radius $\frac{3}{2}$ and the  other is a non-circular cylinder . The first  and second images are the graph of the polynomial $q$ and and the level set $G=C$.  Even though the level set is connected, it cannot be the TreadmillSled of a single curve because it is known that the  TreasmillSled  of a smooth curve must intercept the $\xi_2$-axis horizontally. In this case the level set should be viewed as made out of two connected pieces: The $\xi_2$ intercept $P=(0,-\frac{3}{2})$ and the set $G=C$ minus $P$. The point $P$ determines a round circle and the  set  $G=C$ minus $P$ determines a fundamental piece with not bounded length. Part of this fundamental piece is shown in the third picture of this figure. }
\label{poly 6}
\end{figure}



Since we know that the profile curve of every rotating cylindrical drop satisfies the integral equation $G=C$ and cases (i)-(vi) cover all the possible level sets for the level sets of $G$ then every rotating cylindrical drop fall into one of the fist 6 cases of this proposition. This proves (vii). 

In order to prove (viii) we notice that when a cylindrical rotating drop is not exceptional, has a fundamental piece with finite length whose TreadmillSled is a closed regular curve (a connected component of the set $G=C$). By the properties of the operator TreadmillSled we have the the whole profile curve is a union of rotations of the fundamental piece. The angle of rotation is given by $\Delta{\tilde \theta}=\Delta{\tilde \theta}(C,a,x_1,x_2)$, where $\rho:[x_1,x_2]\rightarrow \bfR{2}$ parametrizes the TreadmillSled of the profile curve. We therefore have that the profile curve is invariant under the group of rotations

\begin{eqnarray}
\label{the group}
&&\\
&&\mathbb{G}=\{(x_1,x_2)\rightarrow (\cos(n \Delta{\tilde \theta}) x_1 +  \sin(n \Delta{\tilde \theta})) x_2 , -\sin(n \Delta{\tilde \theta})) x_1+  \cos(n \Delta{\tilde \theta})) x_2 ): n\in \mathbb{Z}\} \:.\nonumber
\end{eqnarray}

It is not difficult to show that if $\Delta \tilde \theta/\pi$ is a rational number, then the group $\mathbb{G}$ is finite and the cylinder is properly immerse. Moreover, if $\Delta \tilde \theta/\pi$ is not a rational number, then the group $\mathbb{G}$ is not finite and the cylinder is dense in the region bounded by the two cylinders of radius $\sqrt{r_1}$ and $\sqrt{r_2}$. A more detailed explanation of this last statement can be found in \cite{P1}.


\end{proof}

\begin{figure}[ht]
\centerline{\includegraphics[width=9cm,height=5.5cm]{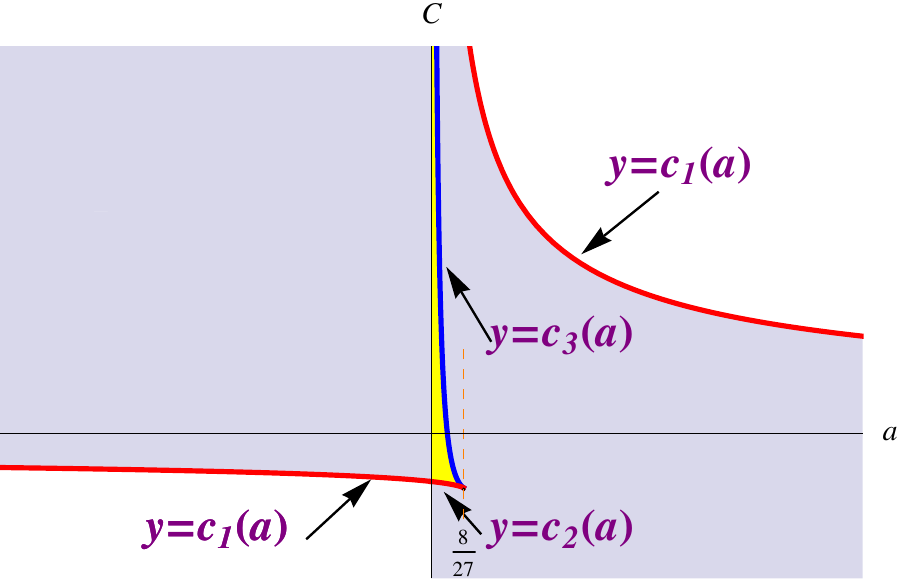}}
\label{c graphs 3}
\caption{ Every  point in the gray region represents a rotating cylindrical drop. Every point in the yellow region represents two rotating cylindrical drops. Every point in either the III or I quadrant on any of the two red curves represents a rotating cylindrical drop. Points along the graph $y=C_2(a)$ represent two surfaces, one of them is a circular cylinder. Every point in the blue curve represents three rotating cylindrical drops, one circular cylinder and two more that differ from any other rotating cylindrical drop in the sense that profile curve any of these two surfaces is not invariant under a group of rotations. }

\end{figure}

\subsection{Embedded and Properly Embedded Examples}

In this subsection we will find some embedded examples and we will display their profile curves. As pointed out before, when the cylindrical drop is not exceptional, its profile curve is 
a union of rotations of fundamental pieces that ends up being invariant under the group $\mathbb{G}$ define in \ref{the group}. It is not difficult to see that a necessary condition for the cylindrical drop to be embedded is that $\Delta {\tilde \theta}=2 \pi/m$ for some integer $m$. We will show that this condition is not sufficient. In order to obtain this potentially embedded example we need to understand the function $\Delta {\tilde \theta}(C,a,x_1,x_2)$.  We will apply the Intermediate Value Theorem in order to solve the equation $$\Delta {\tilde \theta}=2 \pi/m\:.$$ We know that for any $a$ there is a first (or last) value of $C$, which we denote by $C_0(a,x_1,x_2)$, for which the function $\Delta {\tilde \theta}$ is define. We will compute the limit of $\Delta {\tilde \theta}$ when $C$ goes to $C_0$ using Lemma \ref{lemma 1}. Before continuing we will show images of some of the embedded examples. These graphs were generated by the software Mathematica 8 by solving the system of ordinary differential equations (see equation (\ref{dtheta}))

$$x^\prime(t)=\cos(\theta(\theta)), \quad y^\prime(t)=\sin(\theta(\theta)), \quad \theta^\prime(t)=-\Lambda_0+\frac{a}{2}\, (x(t)^2+y(t)^2), \quad$$

 \begin{figure}[h!]
\centerline{\includegraphics[width=3.5cm,height=3.5cm]{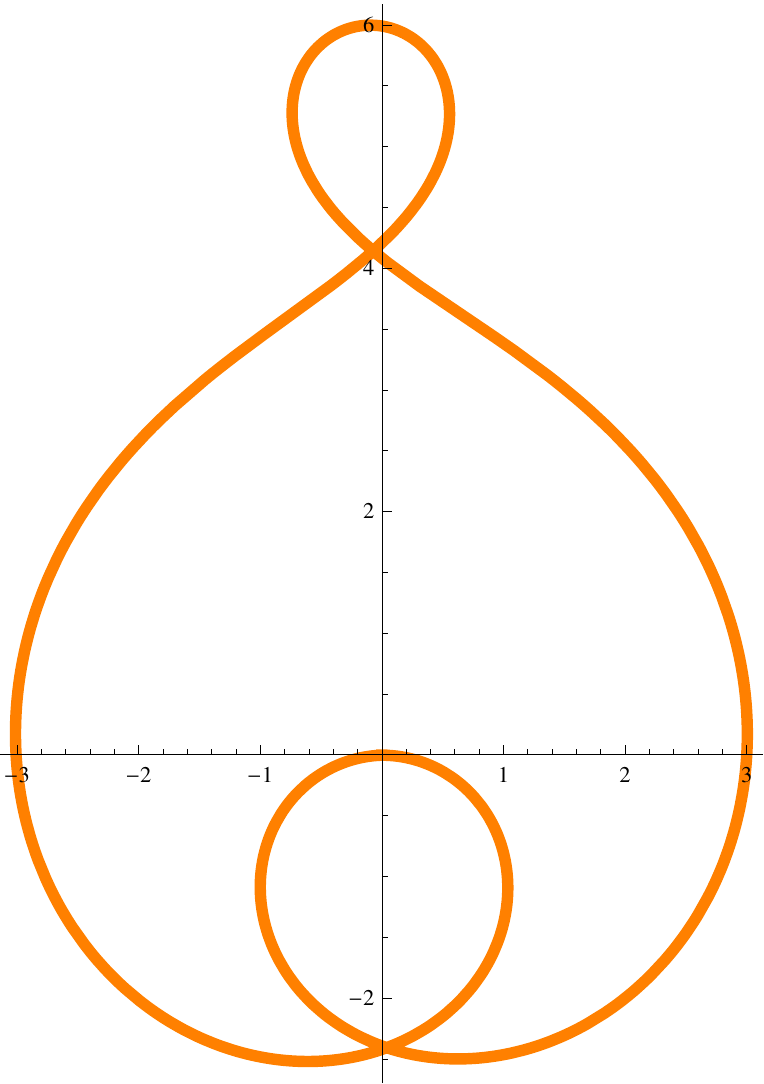}\hskip-1cm \includegraphics[width=3.5cm,height=1.7cm]{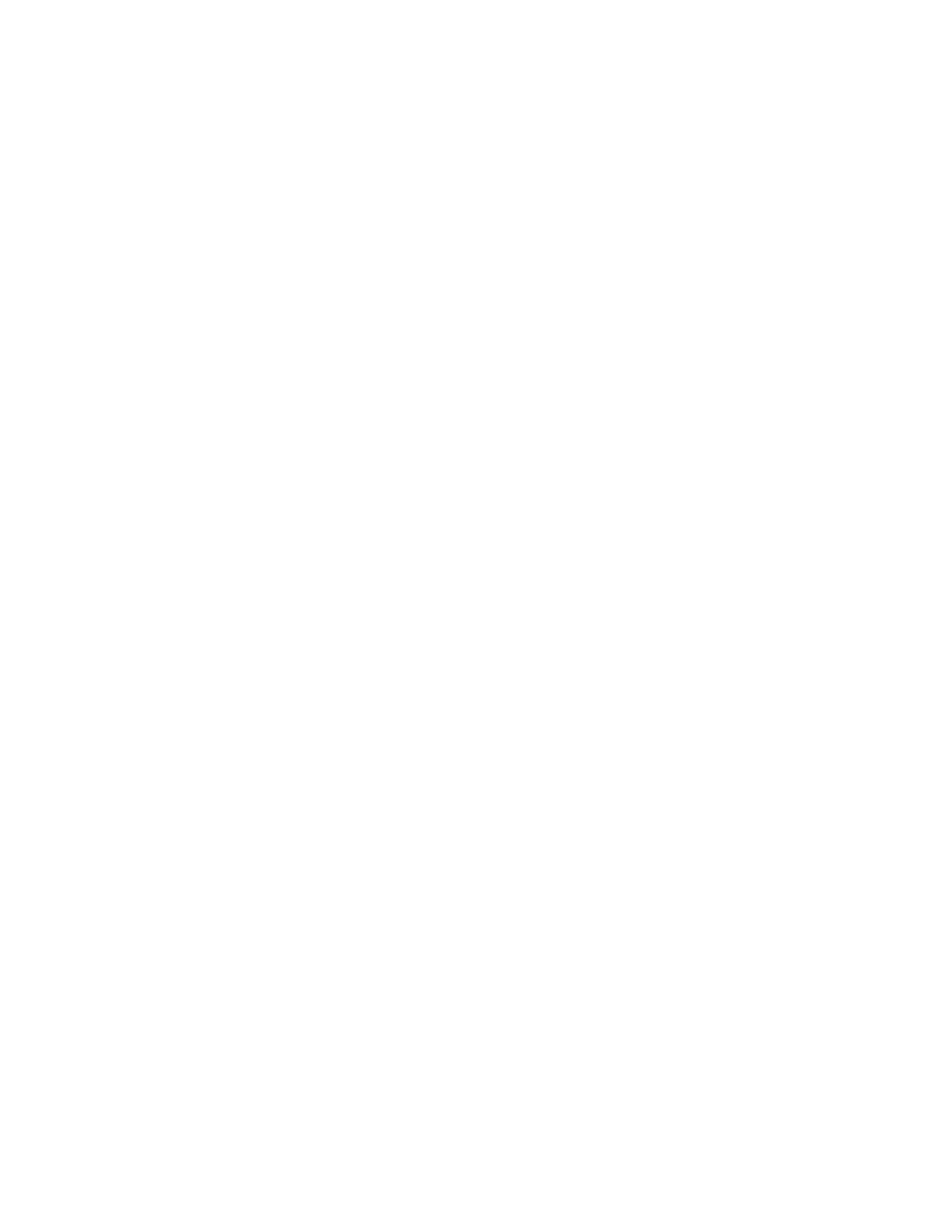}\hskip.2cm\includegraphics[width=3.5cm,height=3.5cm]{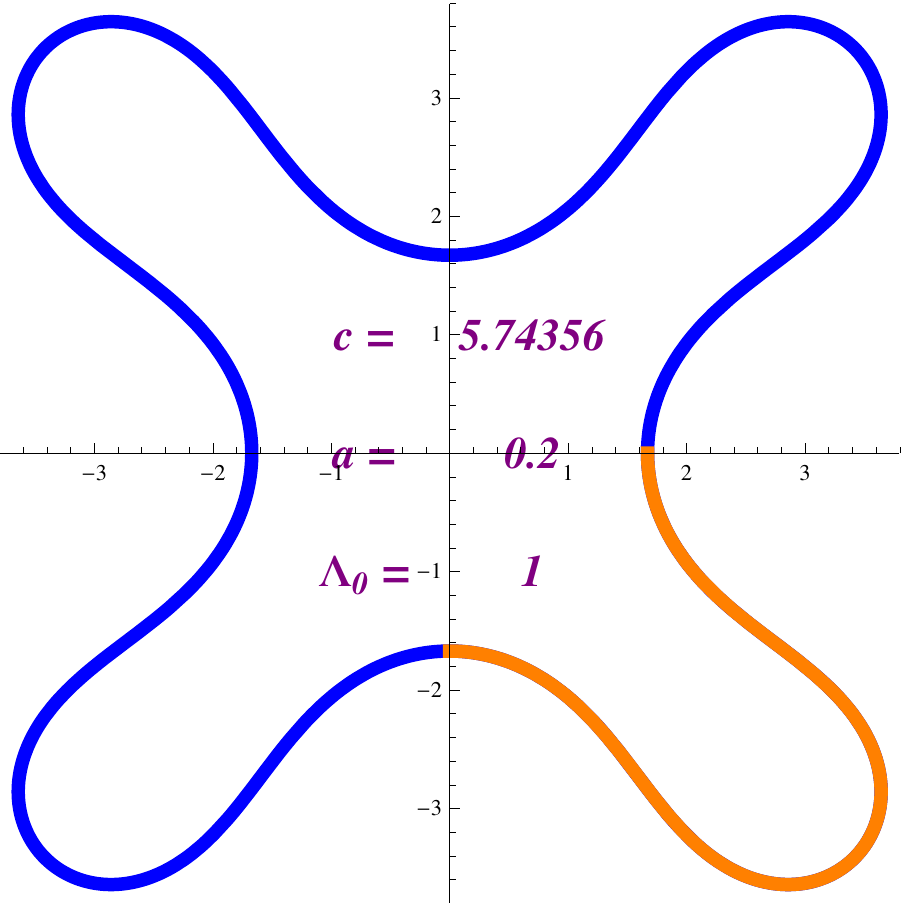}\hskip2cm\includegraphics[width=3.5cm,height=3.5cm]{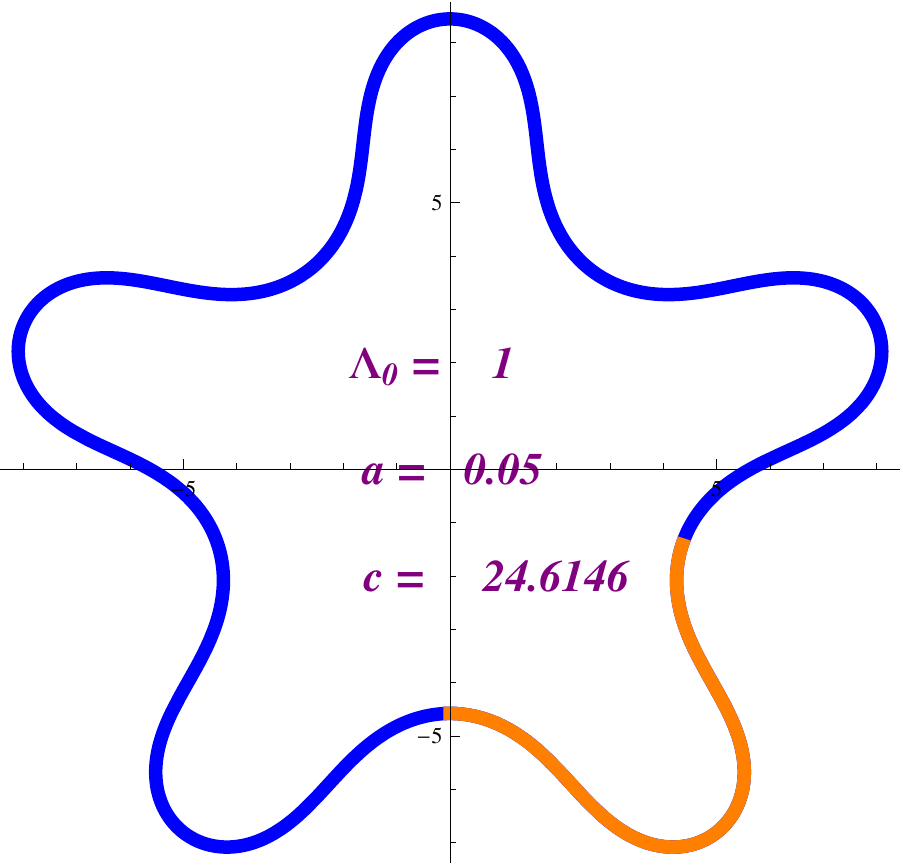}}
\caption{The first example is clearly not embedded but it satisfies  $\Delta{\tilde \theta}=2 \pi$. }
\label{embg1}
\end{figure}

 \begin{figure}[h!]
 \label{G1}
\centerline{\includegraphics[width=3.5cm,height=3.5cm]{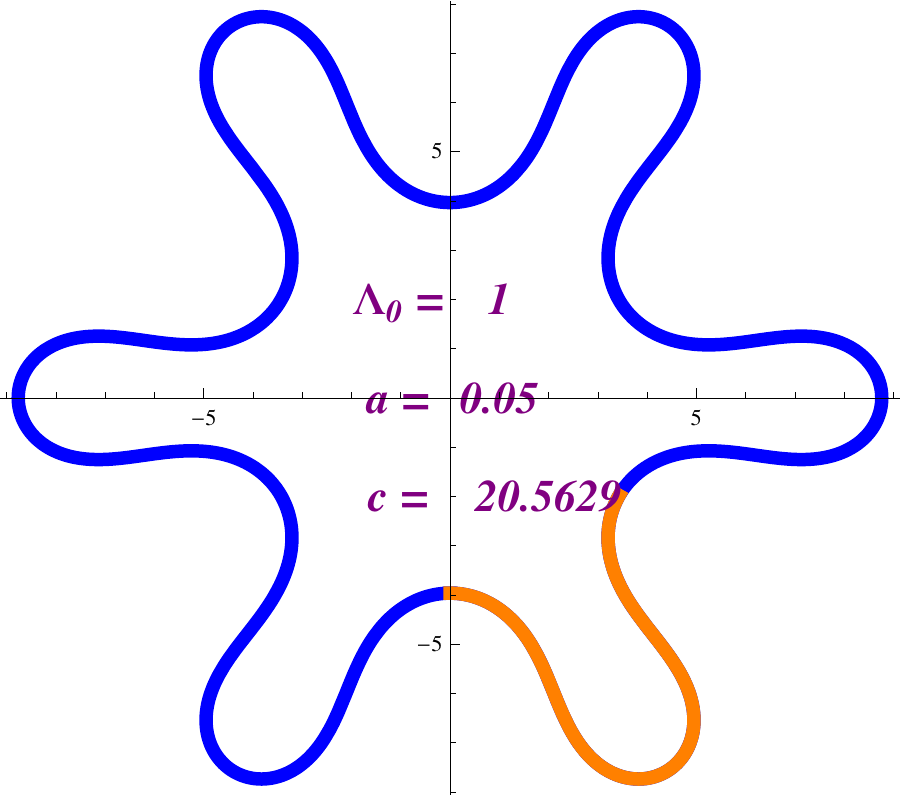}\hskip.2cm \includegraphics[width=3.5cm,height=3.5cm]{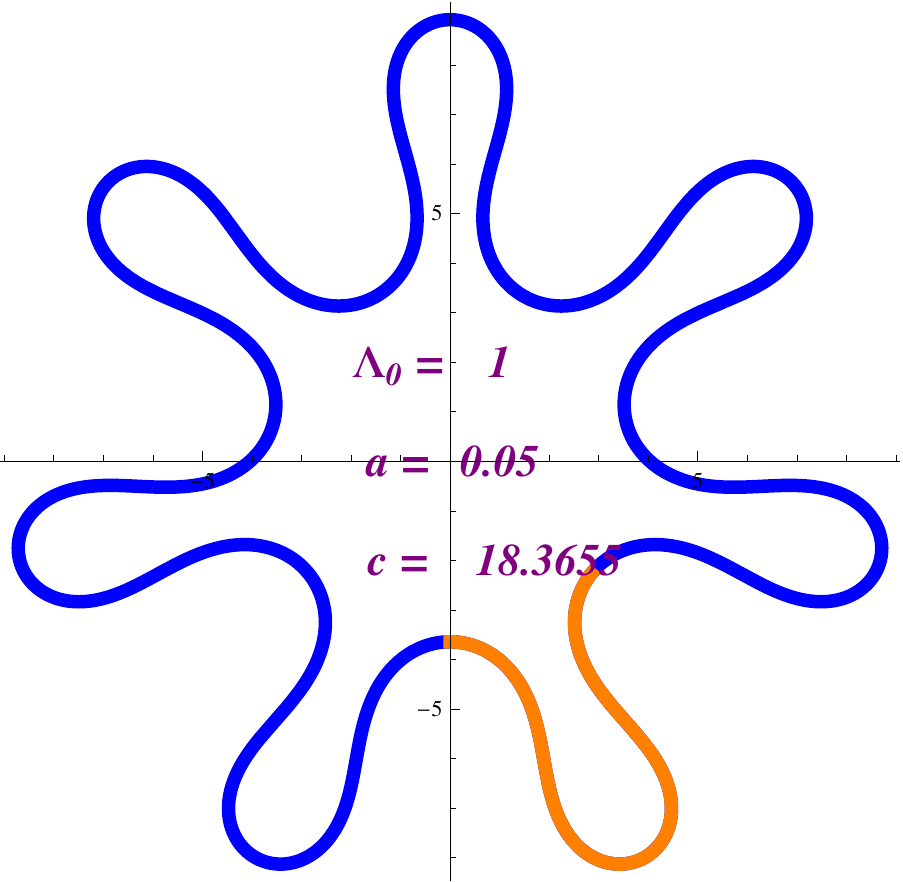}\hskip.2cm\includegraphics[width=3.5cm,height=3.5cm]{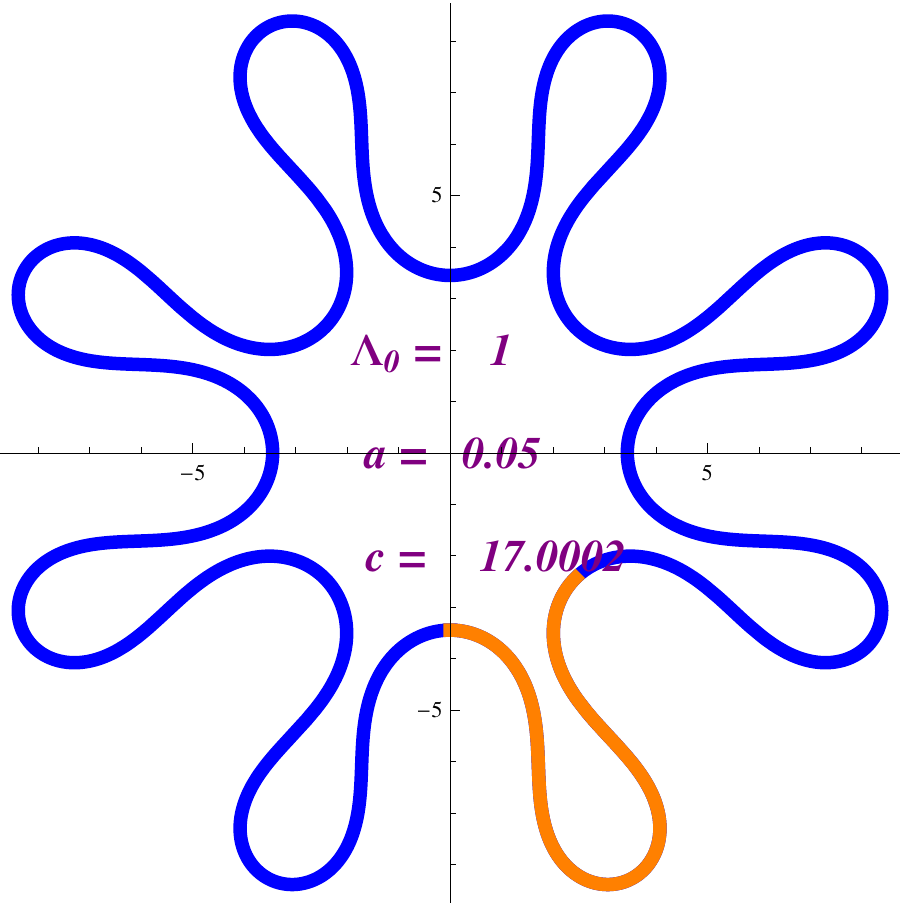}\hskip.2cm\includegraphics[width=3.5cm,height=3.5cm]{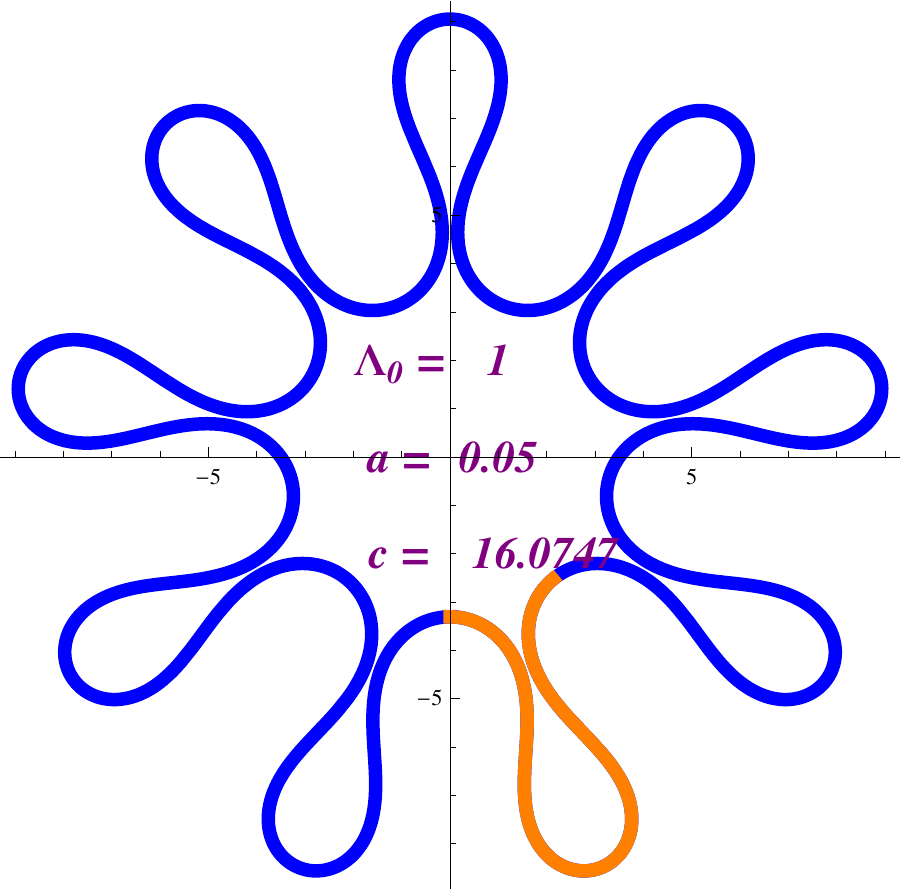}}
\caption{ }
\label{embg2}
\end{figure}
\FloatBarrier

 \begin{figure}[ht]
\centerline{\includegraphics[width=3.5cm,height=3.5cm]{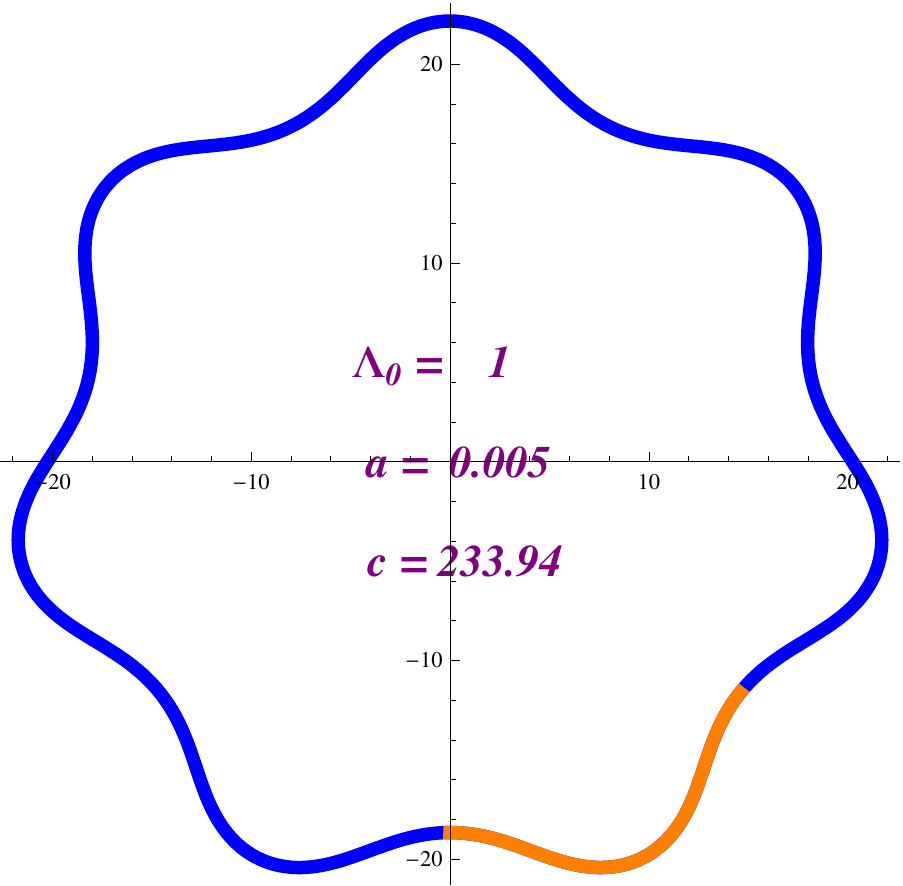}\hskip.2cm \includegraphics[width=3.5cm,height=3.5cm]{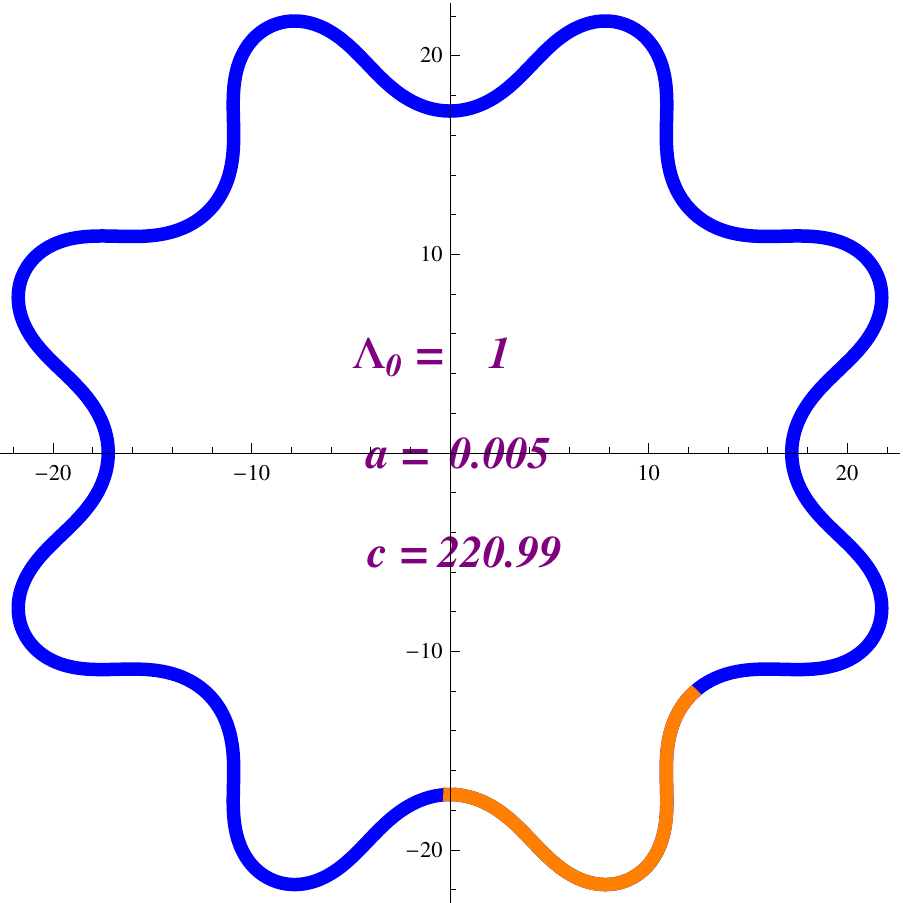}\hskip.2cm\includegraphics[width=3.5cm,height=3.5cm]{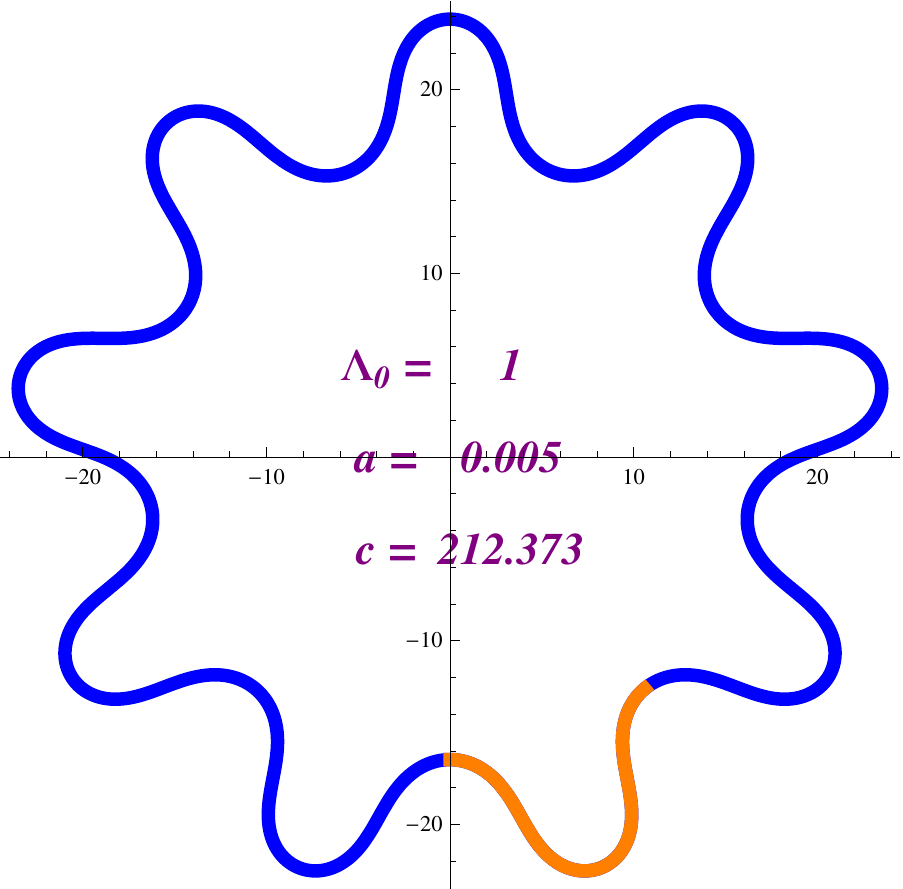}\hskip.2cm\includegraphics[width=3.5cm,height=3.5cm]{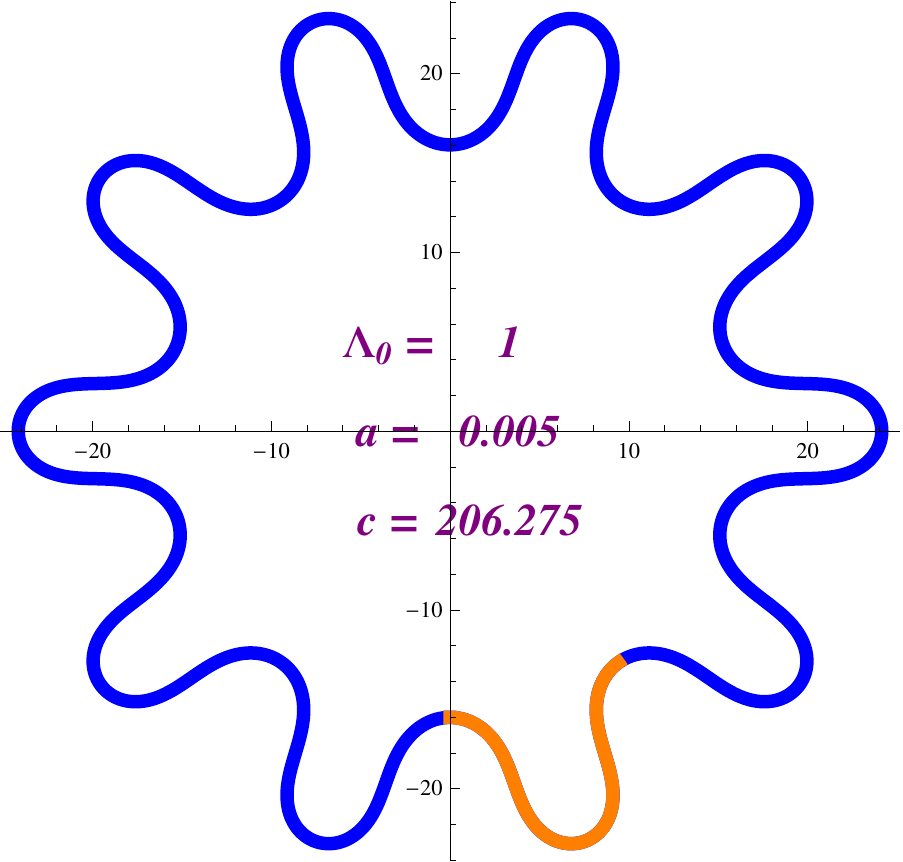}}
\caption{ }
\label{embg3}
\end{figure}

 \begin{figure}[ht]
\centerline{\includegraphics[width=3.5cm,height=3.5cm]{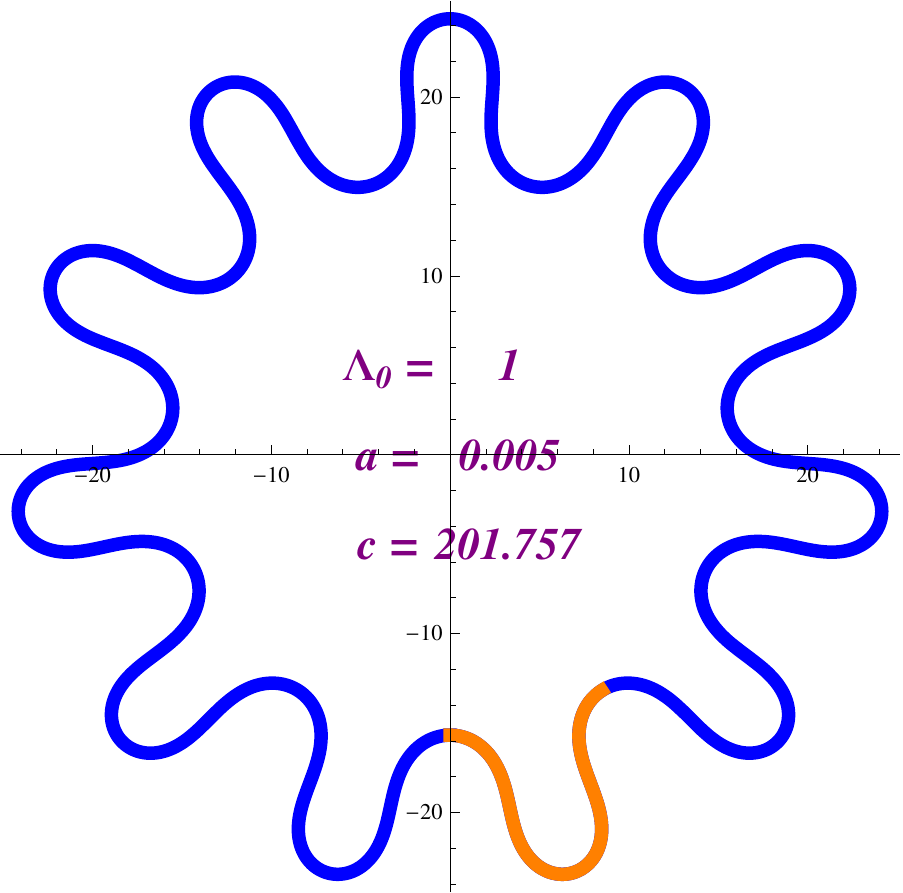}\hskip.2cm \includegraphics[width=3.5cm,height=3.5cm]{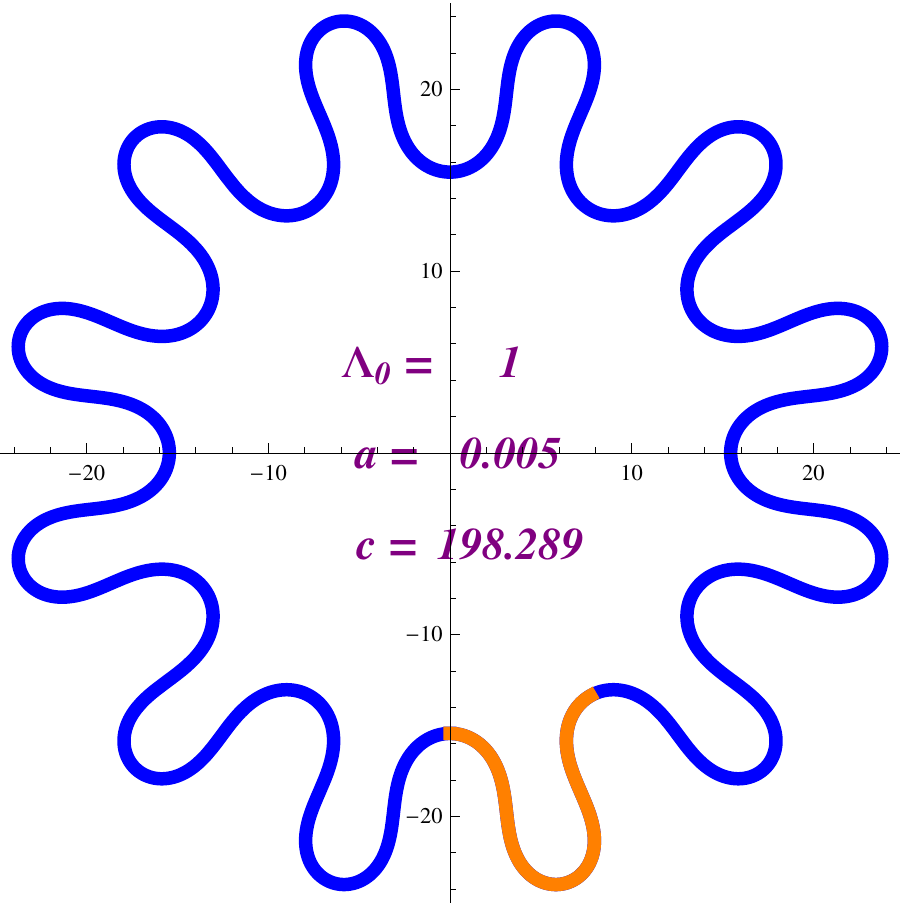}\hskip.2cm\includegraphics[width=3.5cm,height=3.5cm]{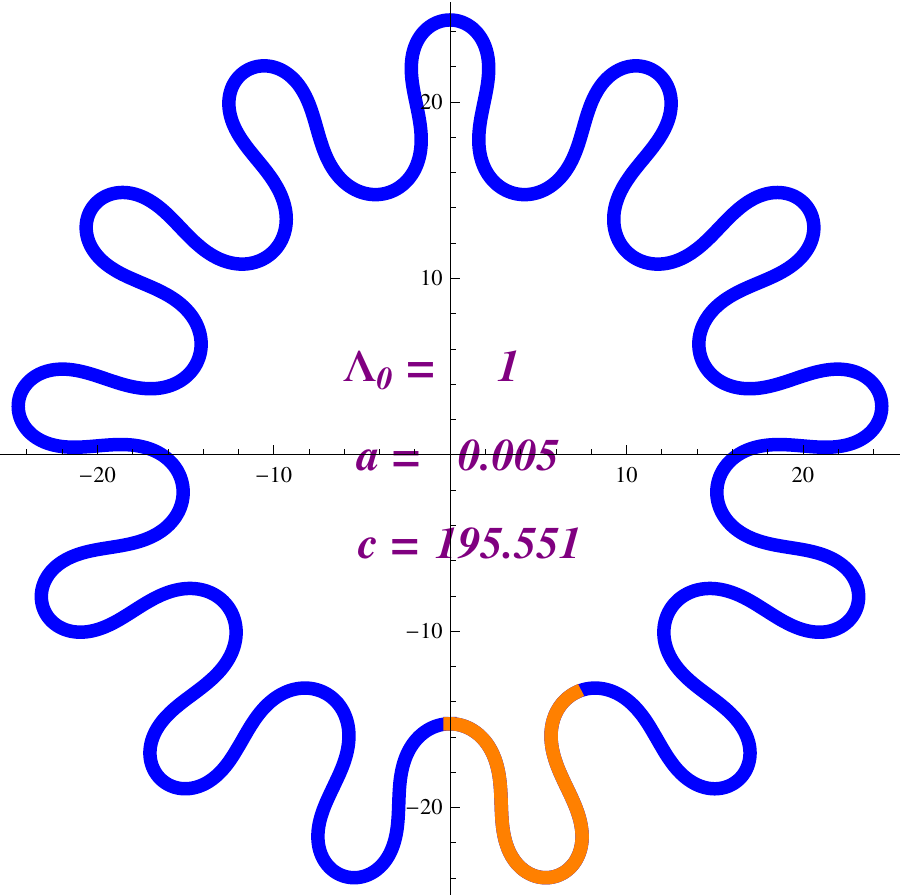}\hskip.2cm\includegraphics[width=3.5cm,height=3.5cm]{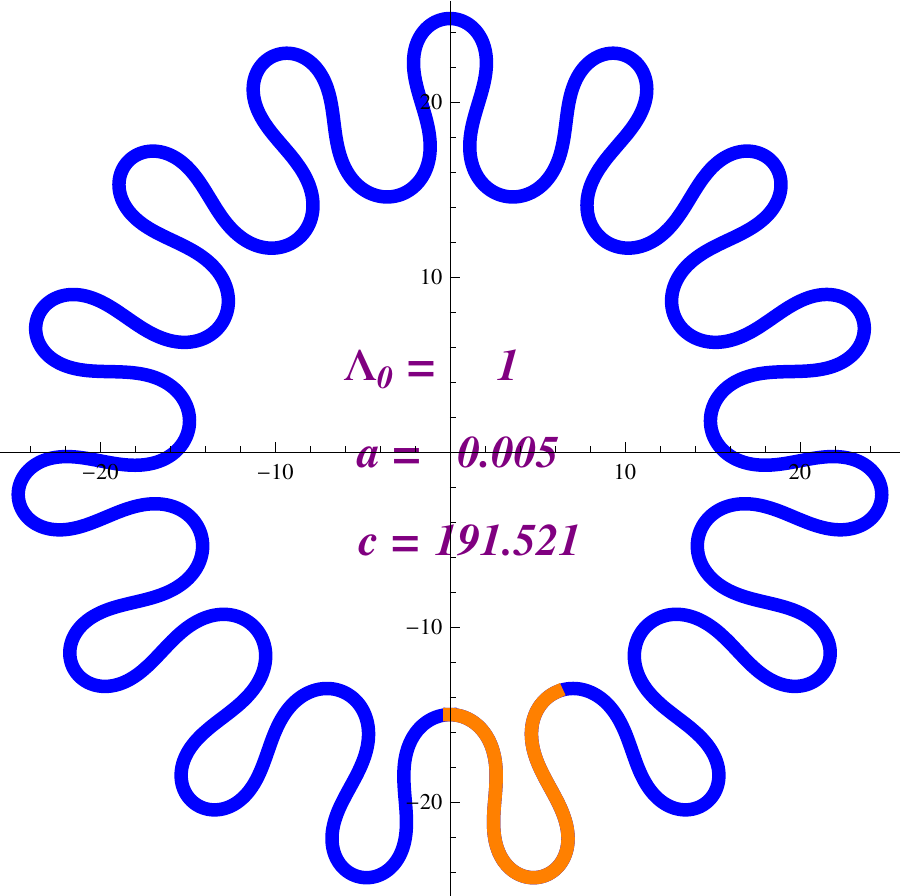}}
\caption{ }
\label{embg4}
\end{figure}

 \begin{figure}[ht]
\centerline{\includegraphics[width=3.5cm,height=3.5cm]{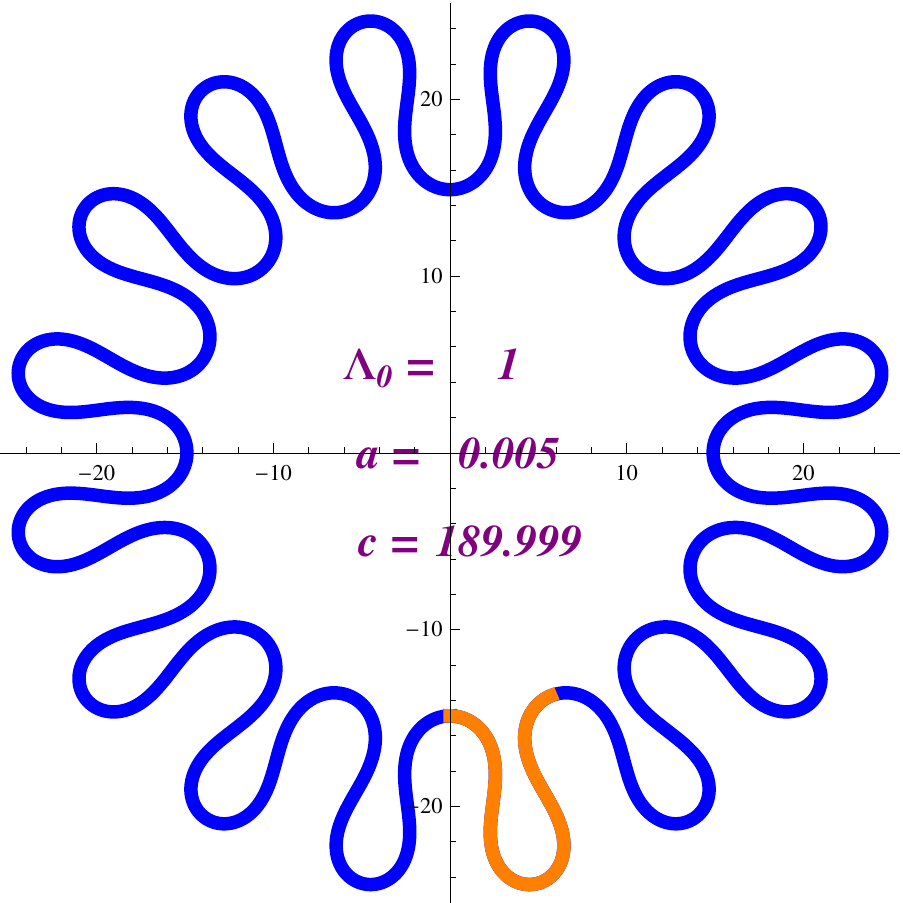}\hskip.2cm \includegraphics[width=3.5cm,height=3.5cm]{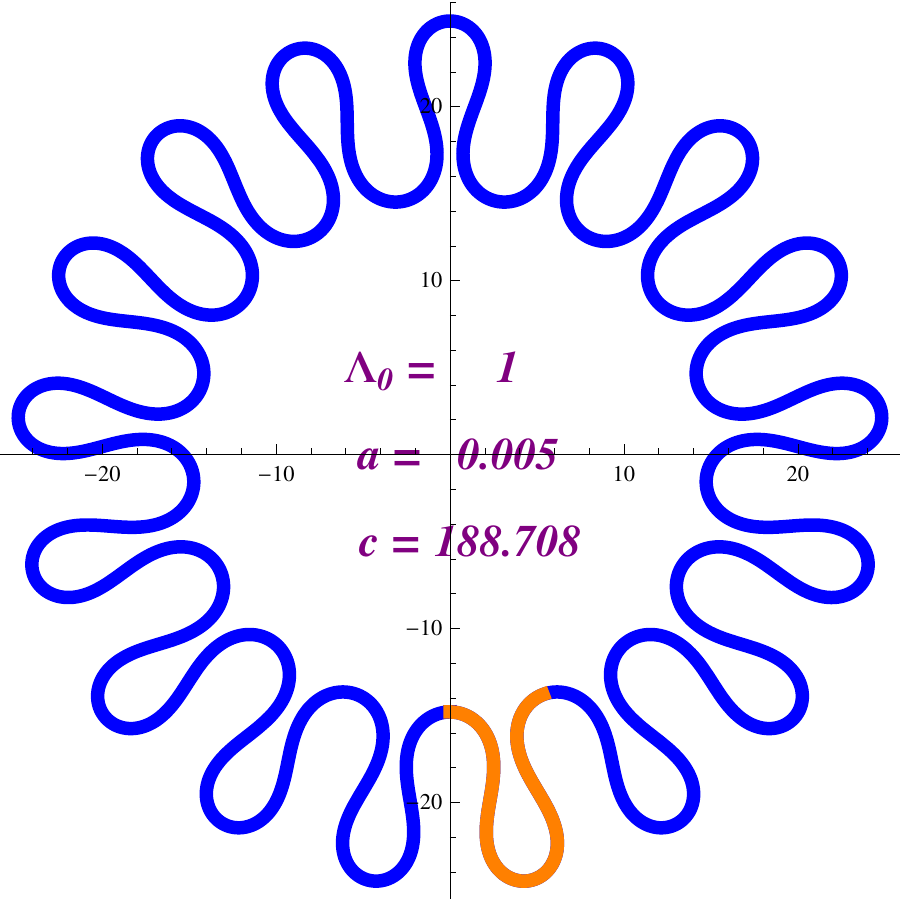}\hskip.2cm\includegraphics[width=3.5cm,height=3.5cm]{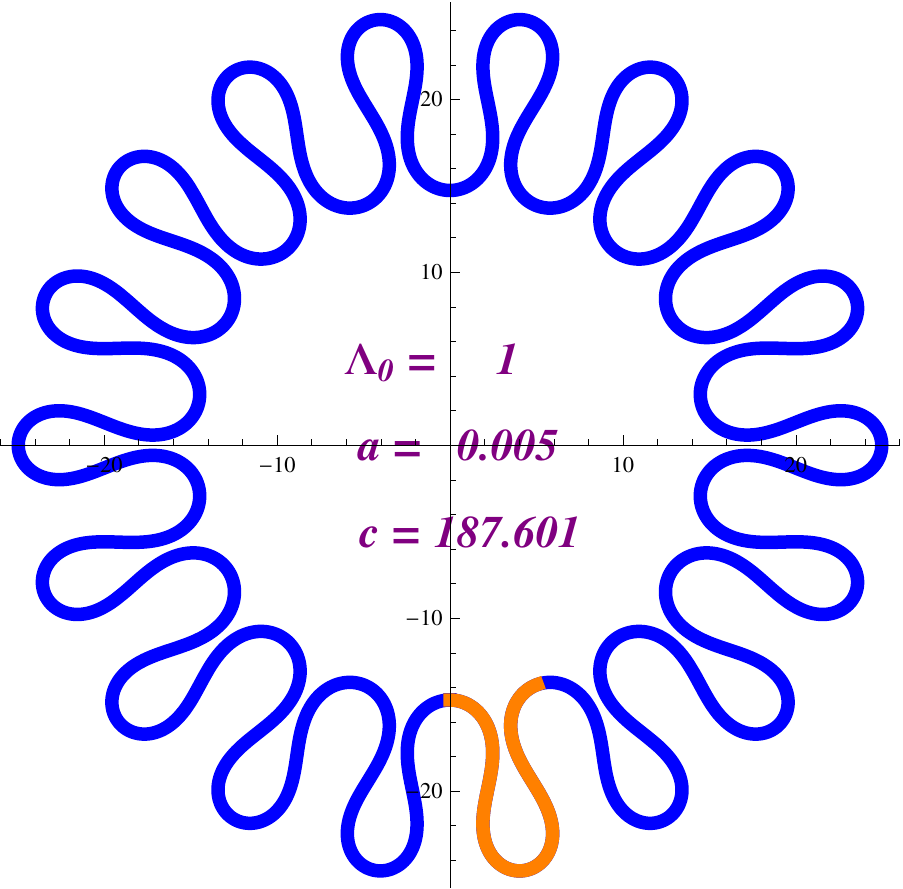}\hskip.2cm\includegraphics[width=3.5cm,height=3.5cm]{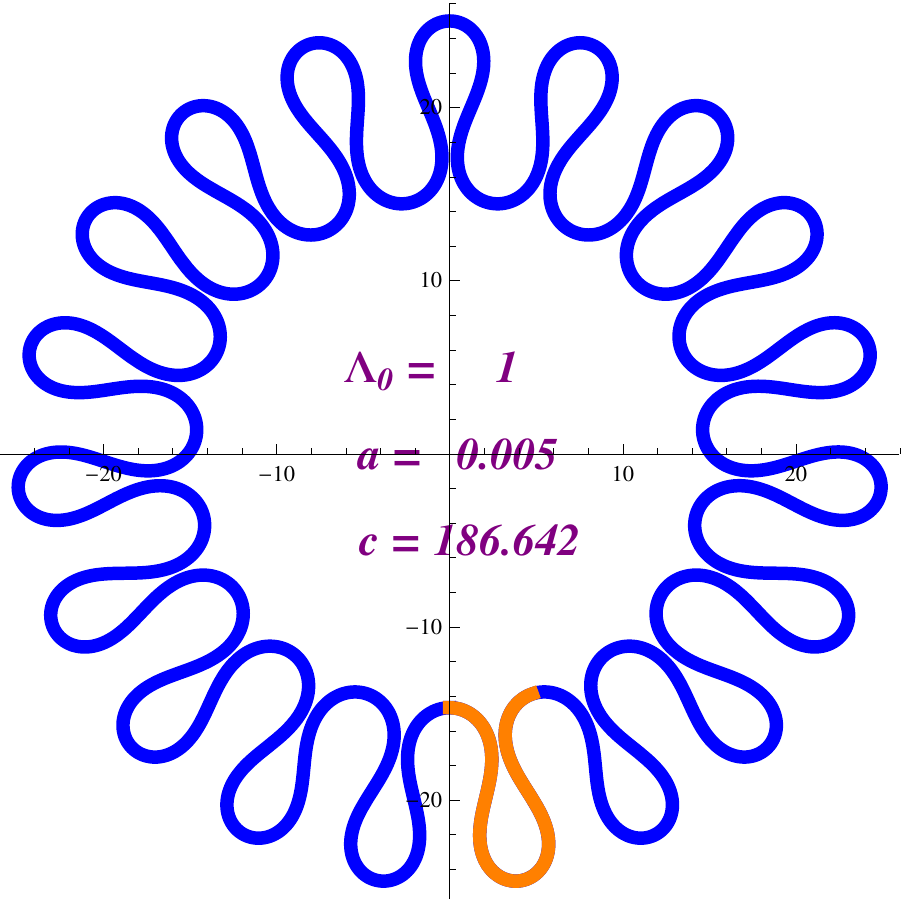}}
\caption{ }
\label{embg5}
\end{figure}

 \begin{figure}[h!]
\centerline{\includegraphics[width=3.5cm,height=3.5cm]{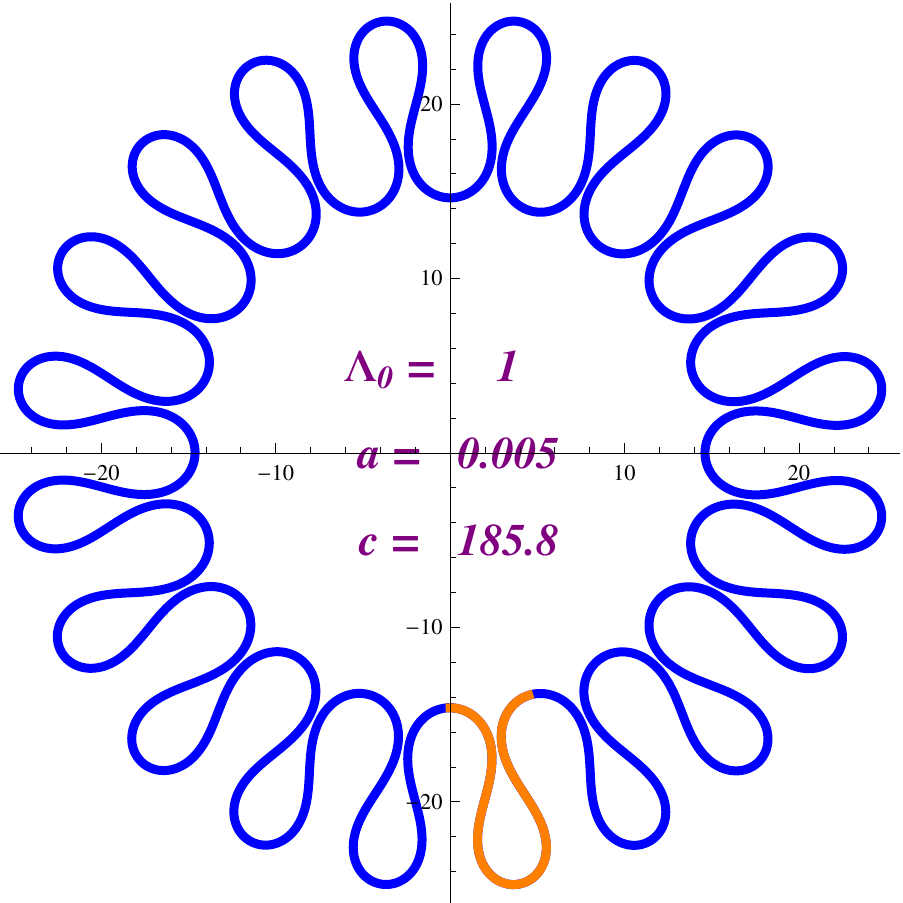}\hskip.2cm \includegraphics[width=3.5cm,height=3.5cm]{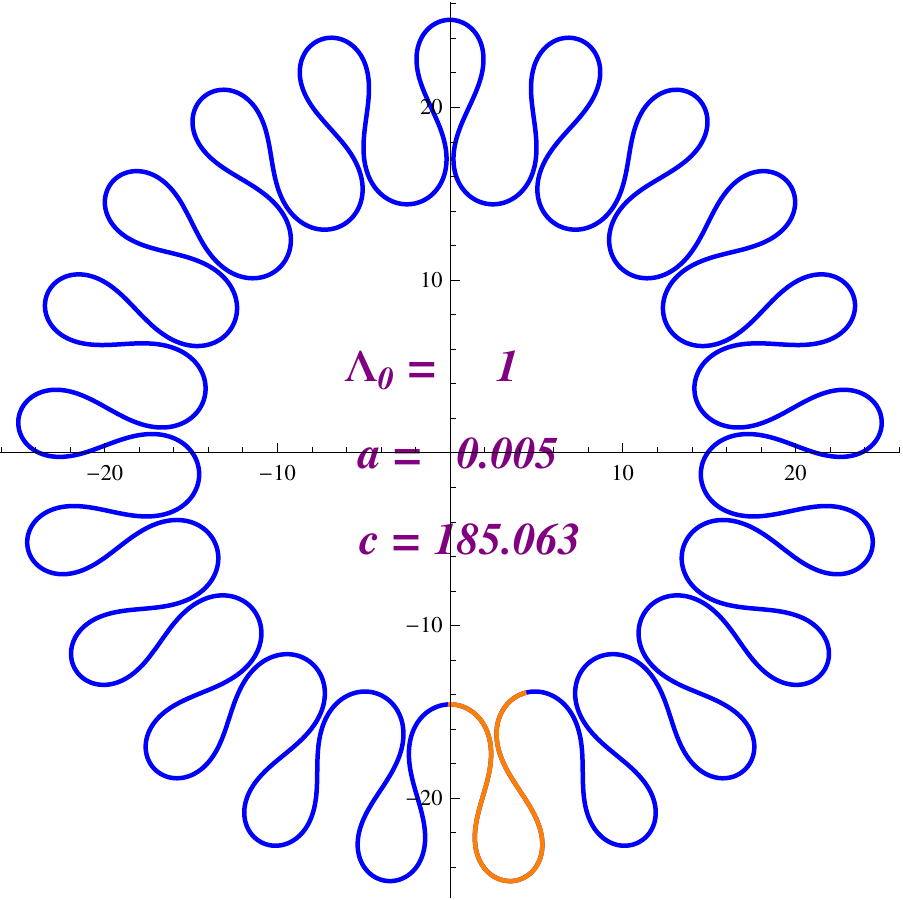}\hskip.2cm\includegraphics[width=3.5cm,height=3.5cm]{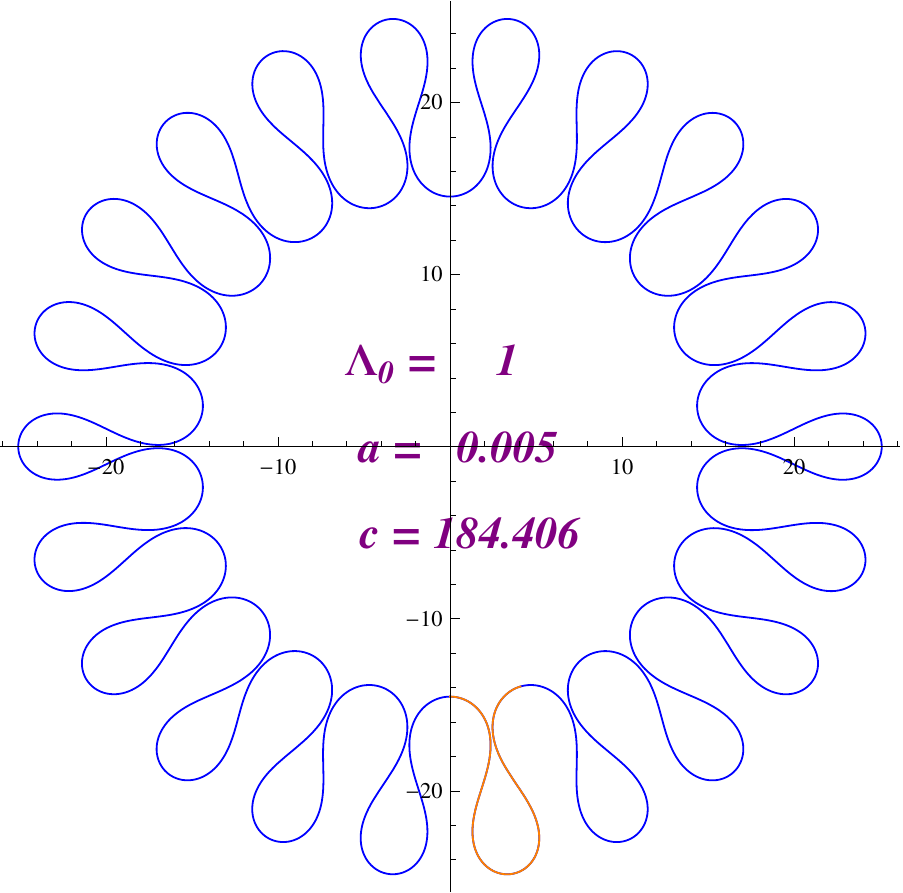}}
\caption{}
\label{embg6}
\end{figure}
\FloatBarrier

\begin{prop}
Let $r_1$, $r_2$ and $r_3$ be the functions given in  Definition \ref{def of ri} and let $C_1$, $C_2$ and $C_3$ be the functions defined in Lemma \ref{roots}. For $i=1,2 $, define the  functions

$$b_i(a)= \frac{(\pi (4 C_i + r_i (a r_i-4)))}{r_i \sqrt{16 + 8 a (C_i - 3 r_i) + 6 a^2 r_i^2}}\:,$$
and let $x_1$ and $x_2$ be the first two roots of the polynomial $q(r,a,C)$.

{\bf a.)} If $a<0$,  then $\lim_{C\to C_1(a)^+}\Delta{\tilde \theta}(C,a,x_1,x_2)=b_1(a)$. 

{\bf b.)} If $a>0$ then $\lim_{C\to C_1(a)^-}\Delta{\tilde \theta}((C,a,x_1,x_2)=b_1(a)$. 

{\bf c.)} If $a>0$ then $\lim_{C\to C_2(a)^+}\Delta{\tilde \theta}((C,a,x_1,x_2)=b_2(a)$. 

\end{prop}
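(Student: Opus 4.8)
The plan is to read off each of the three limits from the integral representation (\ref{change}) of $\Delta\tilde\theta$ by invoking Lemma~\ref{lemma 1}. Specializing (\ref{change}) to $\Lambda_0=1$ and regarding $a$ as a fixed parameter, set
$$f(C,r):=\frac{4C+ar^2-4r}{r}\:,\qquad g(C,r):=q(r,a,C)\:,$$
so that $\Delta\tilde\theta(C,a,x_1,x_2)=\int_{x_1}^{x_2} f(C,r)/\sqrt{g(C,r)}\,dr$. In all three cases Lemma~\ref{roots} controls the behaviour of the integration limits: the endpoints $x_1(C)<x_2(C)$ form a pair of simple roots of $q(\cdot,a,C)$ between which $q$ is positive, and they coalesce at a double root of $q$ as $C$ tends to the indicated value. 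Concretely, for $a<0$ and $C\to C_1(a)^+$, and for $a>0$ and $C\to C_1(a)^-$, they collapse onto $r_1(a)$ (see Definition~\ref{def of ri}); for $0<a<\frac{8}{27}$ and $C\to C_2(a)^+$, they collapse onto $r_2(a)$ — the smallest root at $C=C_2(a)$, which splits into the first two roots of $q$ for $C$ slightly larger. So I would put $c_0:=C_i(a)$, $r_0:=r_i(a)$, choose an arbitrary sequence $C_n$ approaching $C_i(a)$ from the relevant side, and set $u_n:=x_1(C_n)$, $v_n:=x_2(C_n)$.

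Next I would verify the hypotheses of Lemma~\ref{lemma 1} at $(c_0,r_0)$. That $g(c_0,r_0)=0$ and $\partial_r g(c_0,r_0)=q'(r_i,a,C_i)=0$ is immediate from the construction of $C_i$ in Lemma~\ref{roots}: the formula for $C_i$ was obtained precisely by solving the system $q=q'=0$, whose only solutions are $(r_1,C_1)$, $(r_2,C_2)$, $(r_3,C_3)$. For the second derivative, a direct computation from the formula for $q$ gives $\partial_r^2 g(C,r)=q''(r,a,C)=-32-16aC+48ar-12a^2r^2$, hence
$$A:=-\tfrac12\,\partial_r^2 g(c_0,r_0)=16+8aC_i-24ar_i+6a^2r_i^2=16+8a\bigl(C_i-3r_i\bigr)+6a^2r_i^2\:,$$
which is exactly the expression under the radical in $b_i(a)$. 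To apply the lemma I also need $A>0$; I would obtain this by noting that in the parameter ranges at hand $r_i$ is a root of $q(\cdot,a,C_i)$ of multiplicity exactly two — the only triple root produced in Lemma~\ref{roots} occurs at $a=\frac{8}{27}$, where it is $(r_2,C_2)=(r_3,C_3)$, not $r_1$ — and that at $C=C_i$ the quartic $q(\cdot,a,C_i)$ (negative leading coefficient, $q(0,a,C_i)<0$) is $\le 0$ in a neighbourhood of $r_i$, so $r_i$ is a strict local maximum and $q''(r_i,a,C_i)<0$; equivalently one can check $A>0$ directly from $q(r_i,a,C_i)=q'(r_i,a,C_i)=0$.

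Finally, after restricting $f$ and $g$ to a neighbourhood of $r_0>0$ (on which $f$ is smooth), Lemma~\ref{lemma 1} yields
$$\Delta\tilde\theta(C_n,a,x_1,x_2)=\int_{u_n}^{v_n}\frac{f(C_n,r)}{\sqrt{g(C_n,r)}}\,dr\;\longrightarrow\;f(c_0,r_0)\,\frac{\pi}{\sqrt{A}}=\frac{\pi\,\bigl(4C_i+r_i(ar_i-4)\bigr)}{r_i\,\sqrt{16+8a(C_i-3r_i)+6a^2r_i^2}}=b_i(a)\:,$$
and since $C_n$ was an arbitrary sequence approaching $C_i(a)$ from the correct side, this is the asserted one-sided limit. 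The genuinely routine parts are the computation of $q''$ and the algebraic identification of $f(c_0,r_0)\,\pi/\sqrt{A}$ with $b_i(a)$. The only points requiring care are the bookkeeping in the first paragraph — confirming, via Lemma~\ref{roots}, that it is precisely the two endpoints $x_1,x_2$ of the integral that merge, and onto which $r_i$ — together with the non-degeneracy $A>0$, i.e. that the coalescing root has multiplicity exactly two.
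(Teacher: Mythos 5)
Your proposal is correct and follows essentially the same route as the paper: both apply Lemma \ref{lemma 1} with $f=(4C+ar^2-4r)/r$ and $g=q$, use the fact that the two integration endpoints coalesce at the double root $r_i$ as $C\to C_i(a)$, and identify $A=-\tfrac12 q''(r_i)=16+8a(C_i-3r_i)+6a^2r_i^2$ as the quantity under the radical in $b_i(a)$. The paper's proof is just a terser version of yours; your additional checks (that $A>0$, that the merging roots are indeed $x_1,x_2$, and that the double root is not a triple root except at $a=8/27$) are sound and fill in details the paper leaves implicit.
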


\begin{proof}
 
Since  $\Delta{\tilde\theta}=\int_{x_1}^{x_2}\frac{4C+r(a r-4)}{r\sqrt{q}}$, then in every case, when $C$ approaches the limit value the two roots approaches $r_i$ (i=1 or 2) which is a root of $q$ with multiplicity $2$. Therefore Lemma \ref{lemma 1} applies and the proposition follows. Notice that the value $A$ in Lemma  \ref{lemma 1} is given by 

$$A=-\frac{1}{2}q^{\prime\prime}(r_i)=16 + 8 a (C_i - 3 r_i) + 6 a^2 r_i^2\:.$$

\end{proof}

 \begin{figure}[h!]
\centerline{\includegraphics[width=4.5cm,height=3.5cm]{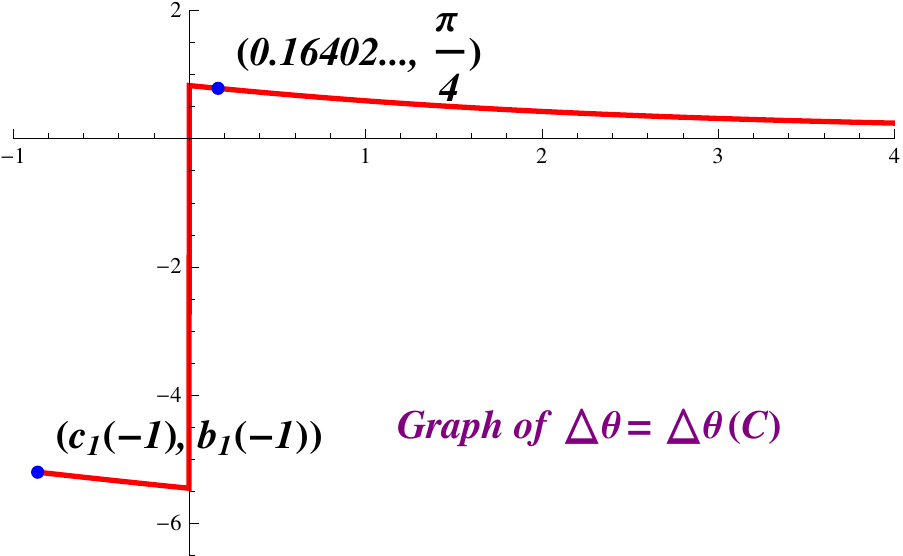}\hskip.2cm \includegraphics[width=4cm,height=3.5cm]{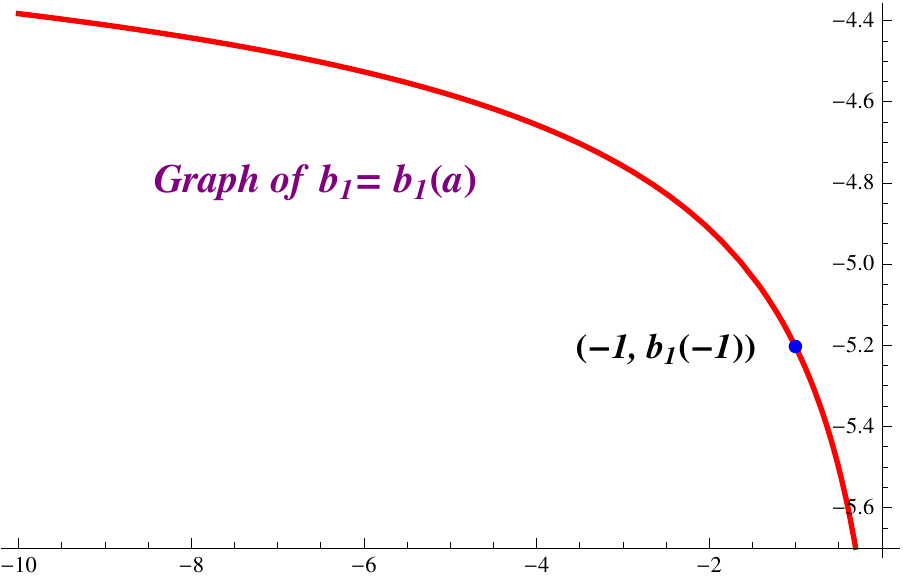}\hskip.2cm\includegraphics[width=3.5cm,height=3.5cm]{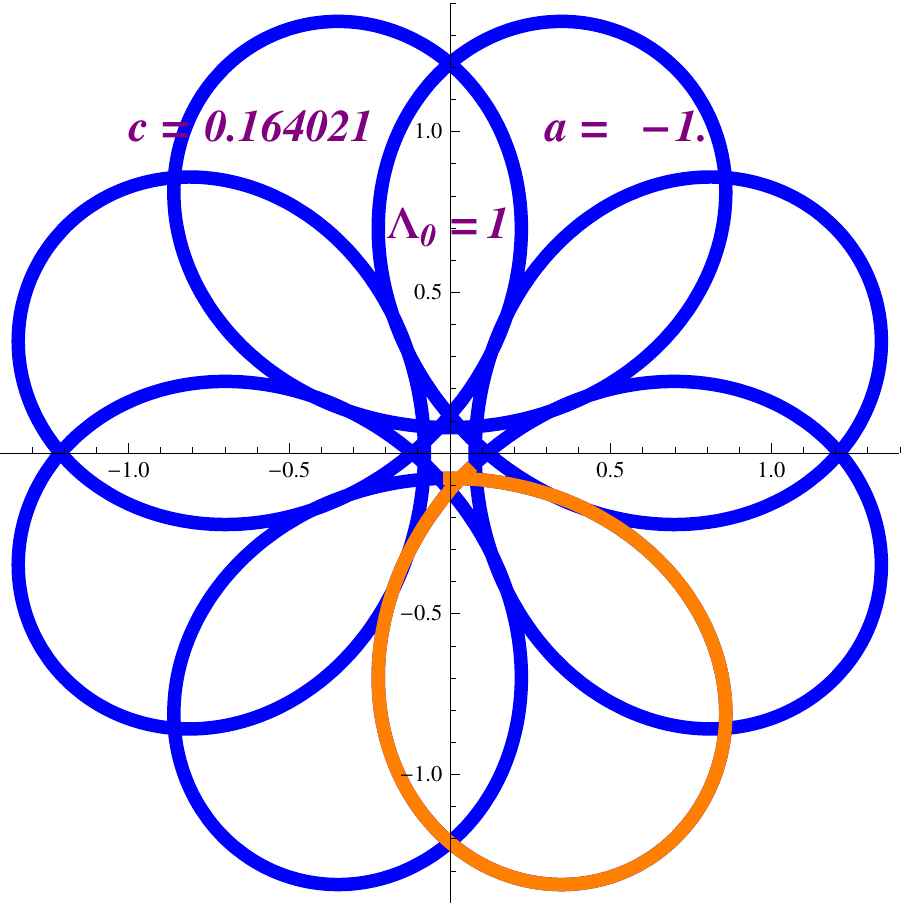}}
\caption{ The first picture shows the graph of the funcition $\Delta{\tilde \theta}(C)$ for $a=-1$.  The first point in the graph $(c_1(-1),b_1(-1))$ has been highlighted as well as the point the graph with second entry equal to $\frac{\pi}{4}$. The profile curve of immersed surface represented by the point $(0.164021,\frac{\pi}{4})$ is shown in the last picture. The picture in the middle shows the graph of the function $b_1(a)$ which represents the limit of the function $\Delta {\tilde \theta}(C)$ when $C$ goes to the first value in its domain. The point in this graph when $a=-1$ has been highlighted. }
\label{lp1}
\end{figure}

 \begin{figure}[h!]
\centerline{\includegraphics[width=4.5cm,height=3.5cm]{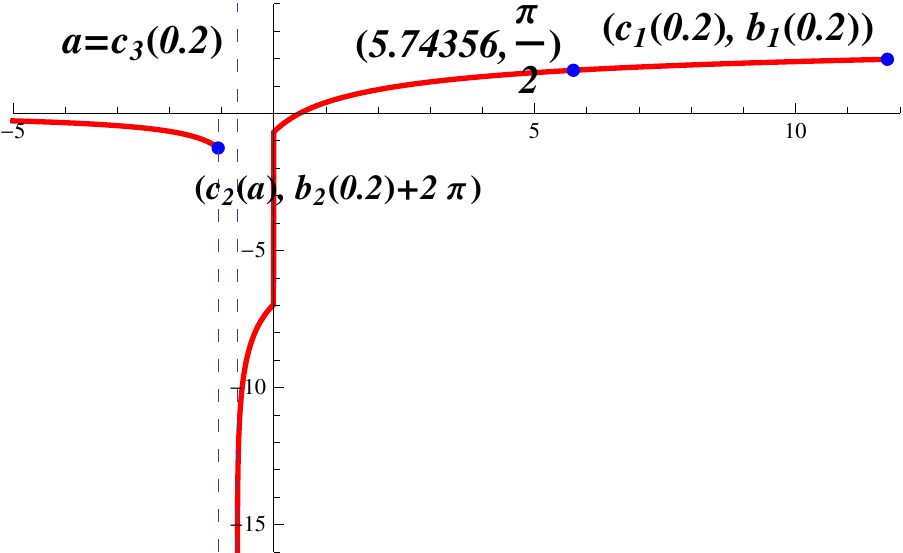}\hskip.2cm \includegraphics[width=4cm,height=3.5cm]{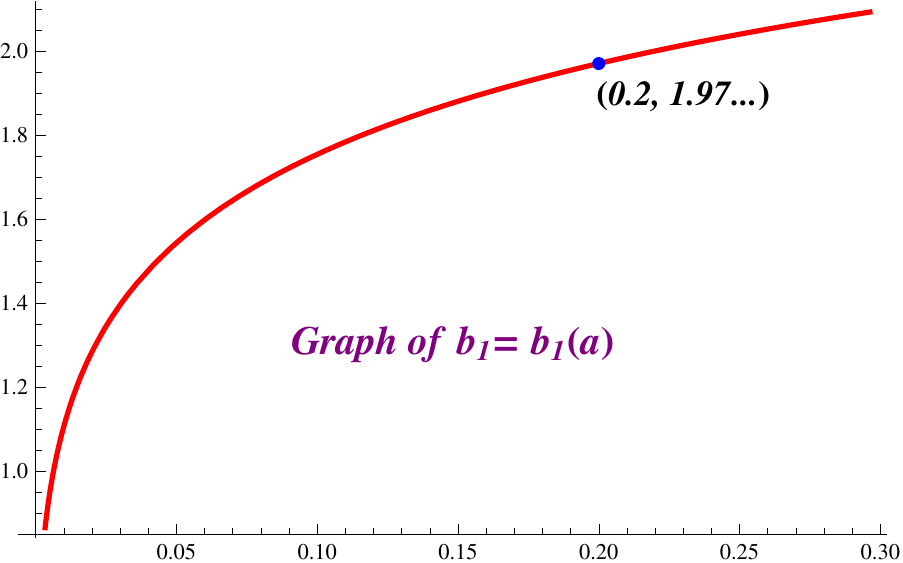}\hskip.2cm\includegraphics[width=3.5cm,height=3.5cm]{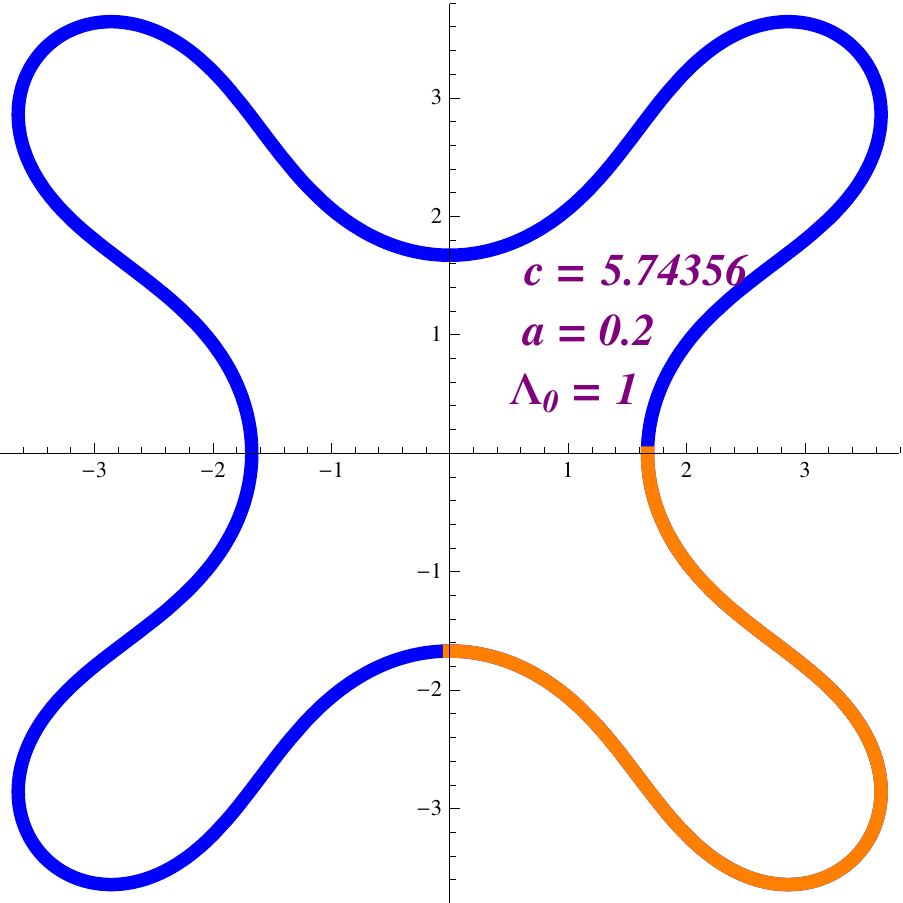}}
\caption{ The first picture shows the graph of the funcition $\Delta{\tilde \theta}(C)$ for $a=0.2$.  The last point in the graph $(c_1(0.2),b_1(0.2))$ has been highlighted as well as the point the graph with second entry equal to $\frac{\pi}{2}$. The profile curve of immersed surface represented by the point $(5.74356,\frac{\pi}{2})$ is shown in the last picture. The picture in the middle shows the graph of the function $b_1(a)$ which represents the limit of the function $\Delta {\tilde \theta}(C)$ when $C$ goes to the last value in its domain. The point in this graph when $a=0.2$ has been highlighted.}
\label{lp2}
\end{figure}
\FloatBarrier

Consider the graph of $C\rightarrow \Delta{\tilde\theta}$ for some particular value of $a$ in the cases considered in Proposition 
\ref{conf space b=1}.

When $a$ is between $0$ and $\frac{8}{27}$ and $C$ is between $C_2(a)$ and $C_3(a)$, there are two surfaces associated with these values of $a$ and $C$ due to the fact that the curve $G=C$ has two connected components. These surfaces are those described in case (ii) of Proposition  \ref{conf space b=1}. A natural question to ask is if it is possible to find values of $a$ and $C$ such that both surfaces are properly immersed, that is, can we find $a$ and $C$ such that $\Delta{\tilde \theta}(C,a,x_1,x_2)/2\pi$ and $\Delta {\tilde \theta}(C,a,x_3,x_4)/2\pi$  are rational numbers. Here $x_1<x_2<x_3<x_4$ are the four roots of $q(r,C,a)$. As a consequence of the following proposition we have that either both surfaces are properly immersed or both surfaces are dense.

\begin{prop}\label{complex variable prop}
Let $\Lambda_0=1$, let $a\in (0, 8/27)$, let $C\in (C_2(a), C_3(a))$ and let $x_1<x_2<x_3<x_4$ be the four roots of $q(r,a,C)=-16 C^2 + 64 r+32 C\,  r-16 \, r^2 - 8 a C\,  r^2+8 a \, r^3 -a^2\,  r^4 $. Then
 if  $C<0$, there holds
$$ \Delta {\tilde \theta}(C,a,x_3,x_4)  =\, \int_{x_3}^{x_4} \frac{(4C+ a r^2-4 \Lambda_0 r) }{r \sqrt{q(r,a,C)}}\, dr=
\, \int_{x_1}^{x_2} \frac{(4C+ a r^2-4 \Lambda_0 r) }{r \sqrt{q(r,a,C)}}\, dr+2\pi= \Delta {\tilde \theta}(C,a,x_1,x_2)+2 \pi \:.$$
 If $C>0$ then there holds

$$ \Delta {\tilde \theta}(C,a,x_3,x_4)  =\, \int_{x_3}^{x_4} \frac{(4C+ a r^2-4 \Lambda_0 r) }{r \sqrt{q(r,a,C)}}\, dr=
\, \int_{x_1}^{x_2} \frac{(4C+ a r^2-4 \Lambda_0 r) }{r \sqrt{q(r,a,C)}}\, dr= \Delta {\tilde \theta}(C,a,x_1,x_2)\:.$$

\end{prop}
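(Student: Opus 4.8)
The plan is to treat both integrals as contour integrals of a single meromorphic $1$-form on a double cover of the $r$-sphere and to extract the relation from the residue theorem; the dependence on $\mathrm{sgn}(C)$ will come entirely from the fact that $\sqrt{q(0,a,C)}=\sqrt{-16C^{2}}$ involves $|C|$ rather than $C$.

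\textbf{Step 1 (setup).} In Case (ii) of Proposition \ref{conf space b=1} the polynomial $q=q(r,a,C)$ has four simple positive roots $x_1<x_2<x_3<x_4$, and by the sign analysis already used in Lemma \ref{roots} one has $q>0$ exactly on $(x_1,x_2)\cup(x_3,x_4)$ and $q<0$ on $(-\infty,x_1)$, on $(x_2,x_3)$ and on $(x_4,\infty)$. Set $\omega:=\dfrac{(ar^{2}-4r+4C)\,dr}{r\,\sqrt{q(r,a,C)}}$, viewed as a single-valued meromorphic $1$-form on the Riemann sphere cut along the two segments $[x_1,x_2]$ and $[x_3,x_4]$, with the branch of $\sqrt q$ fixed by $\sqrt q>0$ on $(x_1,x_2)$ approached from above. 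With this convention $\Delta\tilde\theta(C,a,x_1,x_2)$ and $\Delta\tilde\theta(C,a,x_3,x_4)$ are (up to a factor $\tfrac12$ and a sign to be pinned down) the loop integrals of $\omega$ around the two cuts. The only poles of $\omega$ are a simple pole at $r=0$ (from the factor $1/r$; note $4C\neq 0$ since $C\neq 0$ in each of the two cases treated) and a simple pole at $r=\infty$ (since $q\sim -a^{2}r^{4}$ forces $\omega\sim \pm\,dr/(ir)$); the branch points $x_i$ are \emph{not} poles of $\omega$, because in the local parameter $t$ with $t^{2}=r-x_i$ the form $\omega$ is regular.

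\textbf{Step 2 (residues; the crux).} Carrying the chosen branch of $\sqrt q$ once around the real axis — using the half-monodromy $\sqrt{r-x_i}\mapsto i\sqrt{r-x_i}$ at each branch point together with the known sign of $q$ on each interval — one finds that $\sqrt q$ reads $+\sqrt{|q|}$ on the upper side of $[x_1,x_2]$ but $-\sqrt{|q|}$ on the upper side of $[x_3,x_4]$, and that the \emph{same} branch, followed along a path through $(x_4,\infty)$, over both cuts, and down to $(0,x_1)$, equals $i\sqrt{|q|}$ on $(0,x_1)$ and behaves like $iar^{2}$ near $\infty$. Hence $\mathrm{Res}_{r=0}\,\omega=\dfrac{4C}{\sqrt{q(0,a,C)}}=\dfrac{4C}{4i|C|}=-i\,\mathrm{sgn}(C)$ while $\mathrm{Res}_{r=\infty}\,\omega=i$, so that $\mathrm{Res}_{0}\,\omega+\mathrm{Res}_{\infty}\,\omega=i\,(1-\mathrm{sgn}(C))$, which vanishes when $C>0$ and equals $2i$ when $C<0$. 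Applying Cauchy's theorem to the region $D$ obtained from the cut sphere by deleting small disks about $0$ and $\infty$ (on which $\omega$ is holomorphic), the two small circles contribute $-2\pi i(\mathrm{Res}_{0}\,\omega+\mathrm{Res}_{\infty}\,\omega)$, the two sides of $[x_1,x_2]$ contribute $+2\,\Delta\tilde\theta(C,a,x_1,x_2)$, and the two sides of $[x_3,x_4]$ contribute $-2\,\Delta\tilde\theta(C,a,x_3,x_4)$ — the opposite sign being precisely the $+\sqrt{|q|}$ versus $-\sqrt{|q|}$ discrepancy above. Setting $\int_{\partial D}\omega=0$ gives $\Delta\tilde\theta(C,a,x_3,x_4)-\Delta\tilde\theta(C,a,x_1,x_2)=-\pi i(\mathrm{Res}_{0}\,\omega+\mathrm{Res}_{\infty}\,\omega)=\pi(1-\mathrm{sgn}(C))$, i.e. $0$ if $C>0$ and $2\pi$ if $C<0$, as claimed.

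\textbf{Main obstacle.} The only genuine work is the branch bookkeeping in Step 2: one must carry a single consistent determination of $\sqrt q$ across the whole real axis so as to obtain simultaneously the correct \emph{relative} sign between the two cut integrals (so that the identity involves $\Delta\tilde\theta(x_3,x_4)-\Delta\tilde\theta(x_1,x_2)$ and not their sum) and the correct \emph{absolute} signs of the two residues (so that $C>0$ produces $0$ and not $\pm 2\pi$). An equivalent and arguably cleaner formulation avoids choosing cuts altogether: work on the genus-one curve $w^{2}=q(r)$, where $\omega$ is a meromorphic differential with simple poles over $r=0$ and over $r=\infty$; express $\Delta\tilde\theta(C,a,x_1,x_2)$ and $\Delta\tilde\theta(C,a,x_3,x_4)$ as half-periods along the two cycles sitting over the cuts, and use that, in the homology of the curve minus the poles, the difference of these cycles is a small loop about the point over $r=0$ together with one about the point over $r=\infty$, so that the period difference equals $2\pi i$ times the corresponding residue sum. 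Either way the final relation is driven by the single algebraic fact $\sqrt{q(0,a,C)}=4i|C|$.
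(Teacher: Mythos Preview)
Your argument is correct and is essentially the same contour-integration/residue approach the paper uses: integrate the single-valued form over the boundary of the cut plane with small circles about $r=0$ and $r=\infty$, identify the loop integrals around the two cuts with $\pm 2\,\Delta{\tilde\theta}$, and read off the relation from the two residues. The only cosmetic difference is that the paper makes the branch structure explicit by writing the integrand with the factors $(z-x_2)\sqrt{T_1(z)}\,(z-x_4)\sqrt{T_2(z)}$, $T_i$ a M\"obius quotient, using the principal branch of the square root, whereas you keep $\sqrt{q}$ and do the monodromy bookkeeping by hand; your alternative formulation on the curve $w^{2}=q(r)$ is a nice conceptual repackaging that the paper does not mention.
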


\begin{proof}

Let $U$ be the region shown in Figure \ref{region} bounded by the curves $\gamma_1$, $\gamma_2$, $\gamma_3$ and $\gamma_4$. The curves $\gamma_i$ will carry the orientations induced by the fact that they are components of $\partial U$.  Define

$$f(z):= \frac{i (4C+ a z^2-4 z) }{z a (z-x_2) \sqrt{T_1(z)} (z-x_4) \sqrt{T_2(z)}} \:,$$

where  $\sqrt{\quad}$ is the principal branch of the square root, $T_1(z)=\frac{z-x_1}{z-x_2}$ and $T_2(z)=\frac{z-x_3}{z-x_4}$.

When the radius of $\gamma_1$, and the radii of the circular arcs of  $\gamma_2$ and $\gamma_3$ tend to zero, we have that $\int_{\gamma_1}f(z)\, dz$ goes to $-2 \pi \frac{C}{|C|}$, $\int_{\gamma_2}f(z)\, dz$ goes to $2 \Delta{\tilde\theta}(a,C,x_1,x_2)$ and $\int_{\gamma_3}f(z)\, dz$ goes to $-2 \Delta{\tilde\theta}(a,C,x_3,x_4)$.  When the radius of $\gamma_4$ goes to $\infty$, we have that $\int_{\gamma_4}f(z)\, dz$ goes to $-2 \pi $.

The proof follows  because the integral $\int_{\gamma_1+\gamma_2+\gamma_3+\gamma_4}f(z)dz$ vanishes since $f$ is analytic in 
$U$.

 \begin{figure}[ht]
\centerline{\includegraphics[width=5cm,height=5cm]{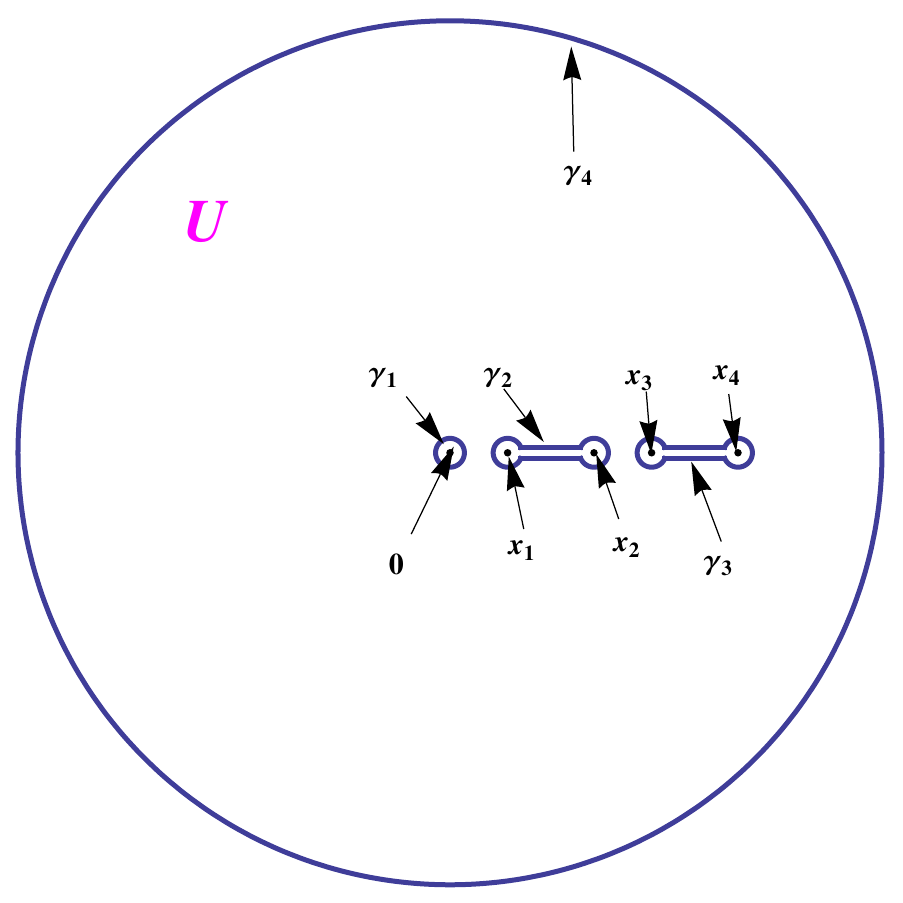}}
\caption{Region of integration used in the proof of Proposition \ref{complex variable prop}.}
\label{region}
\end{figure}

\end{proof}

 \begin{figure}[ht]
\centerline{\includegraphics[width=3.5cm,height=3.0cm]{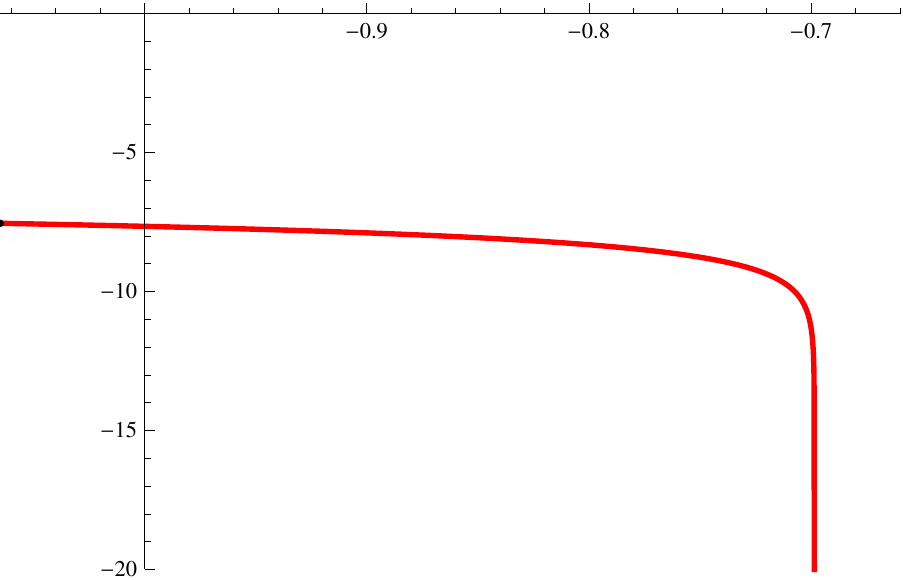}\hskip.2cm \includegraphics[width=3.5cm,height=3.0cm]{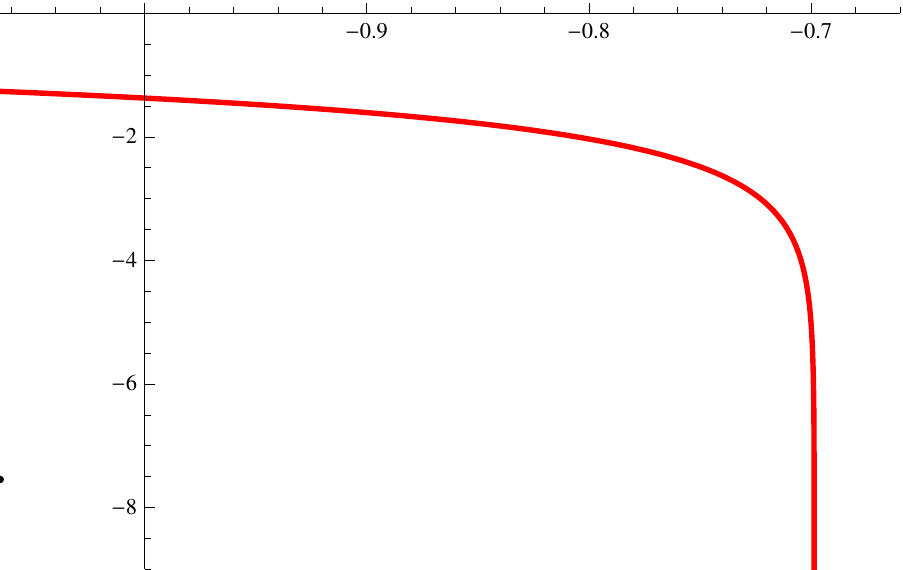}\hskip.2cm\includegraphics[width=3.5cm,height=3.0cm]{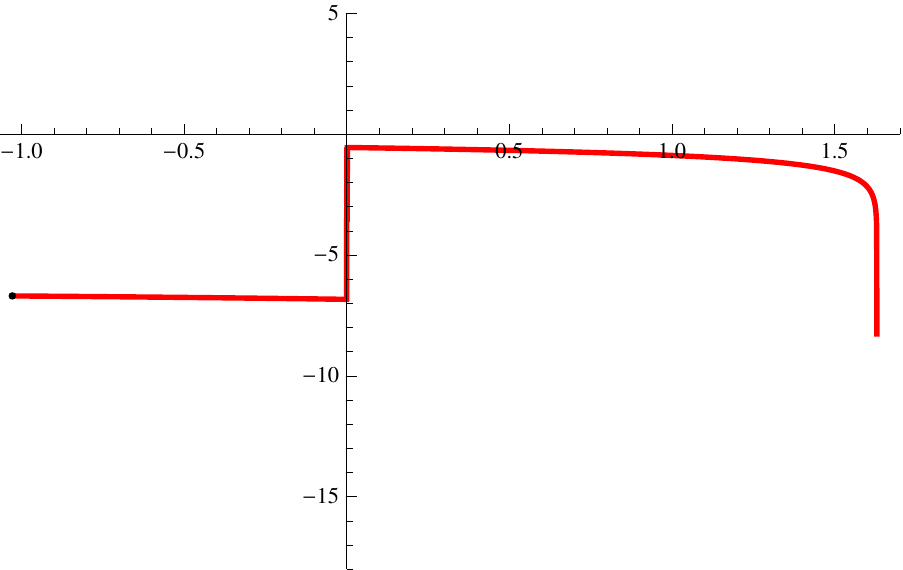}\hskip.2cm\includegraphics[width=3.5cm,height=3.5cm]{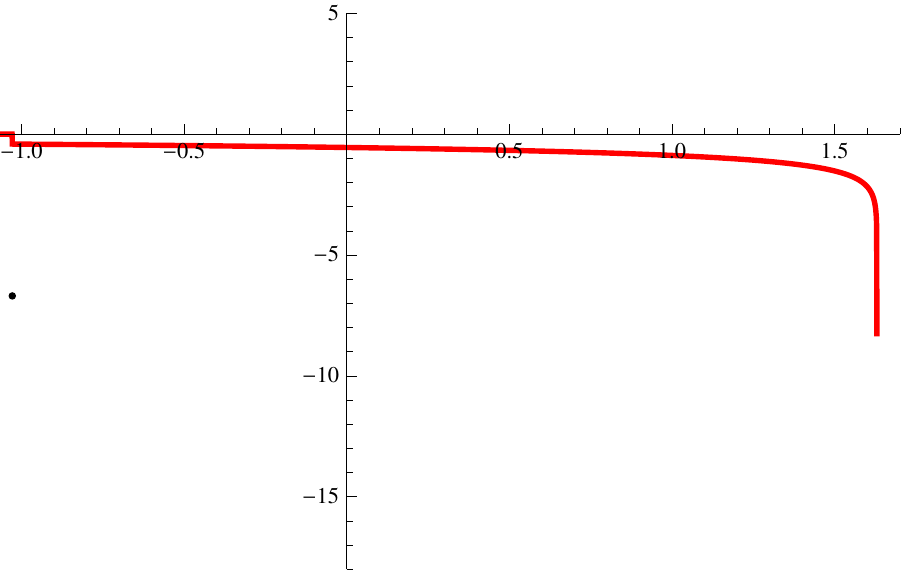}}
\caption{ The first picture shows the graph of the function $\Delta{\tilde\theta}(C,0.2,x_1,x_2)$ and the second picture shows the graph of the function $\Delta{\tilde\theta}(C,0.2,x_3,x_4)$. We have $C_2(0.2)<C_3(0.2)<0$. The third picture shows the graph of the function $\Delta{\tilde\theta}(C,0.1,x_1,x_2)$ and the fourth picture shows the graph of the function $\Delta{\tilde\theta}(C,0.1,x_3,x_4)$. We have $C_2(0.1)<0<C_3(0.1)$.}
\label{compv}
\end{figure}

 \begin{figure}[ht]
\centerline{\includegraphics[width=3.5cm,height=3.0cm]{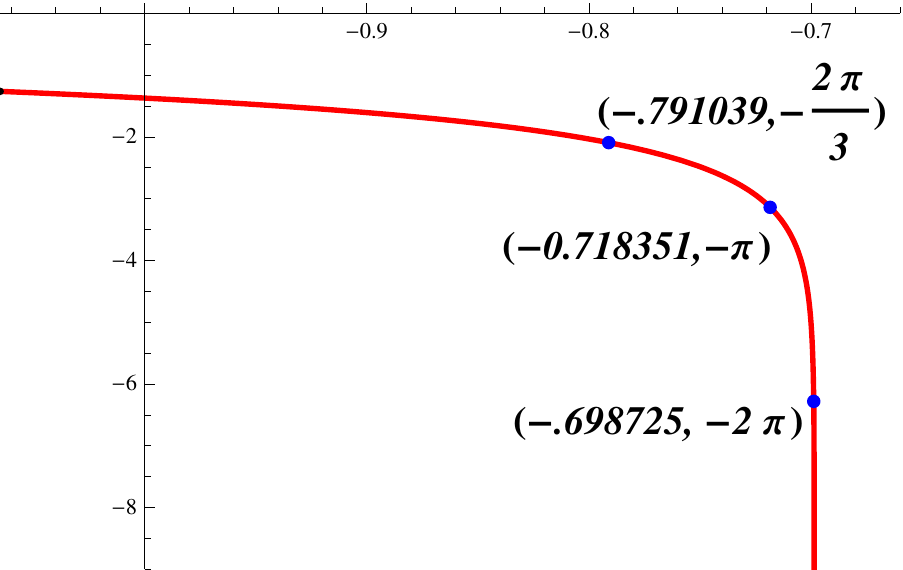}\hskip.2cm \includegraphics[width=3.cm,height=3.5cm]{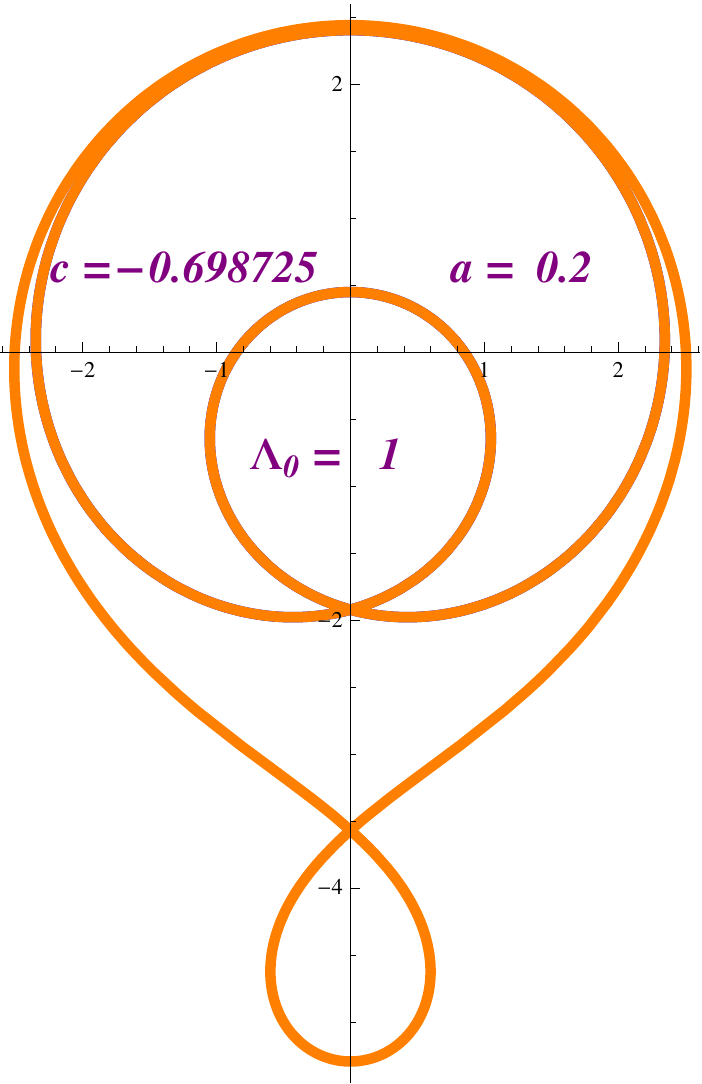}\hskip.2cm\includegraphics[width=3.5cm,height=2.5cm]{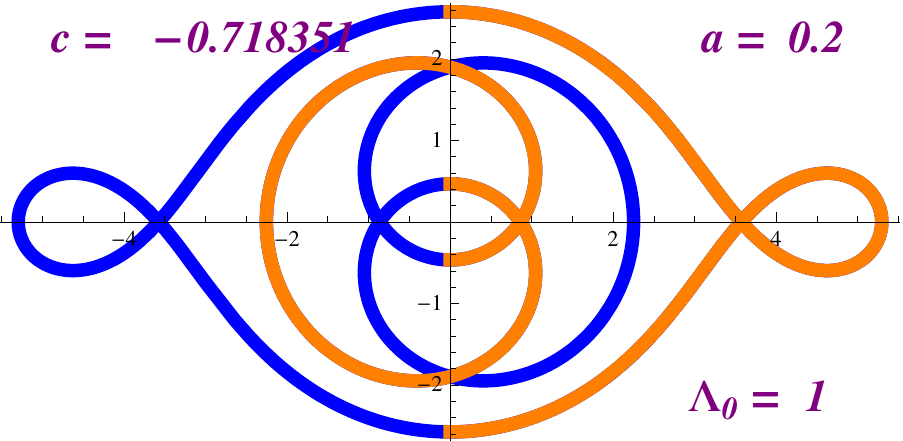}\hskip.2cm\includegraphics[width=3.5cm,height=3.2cm]{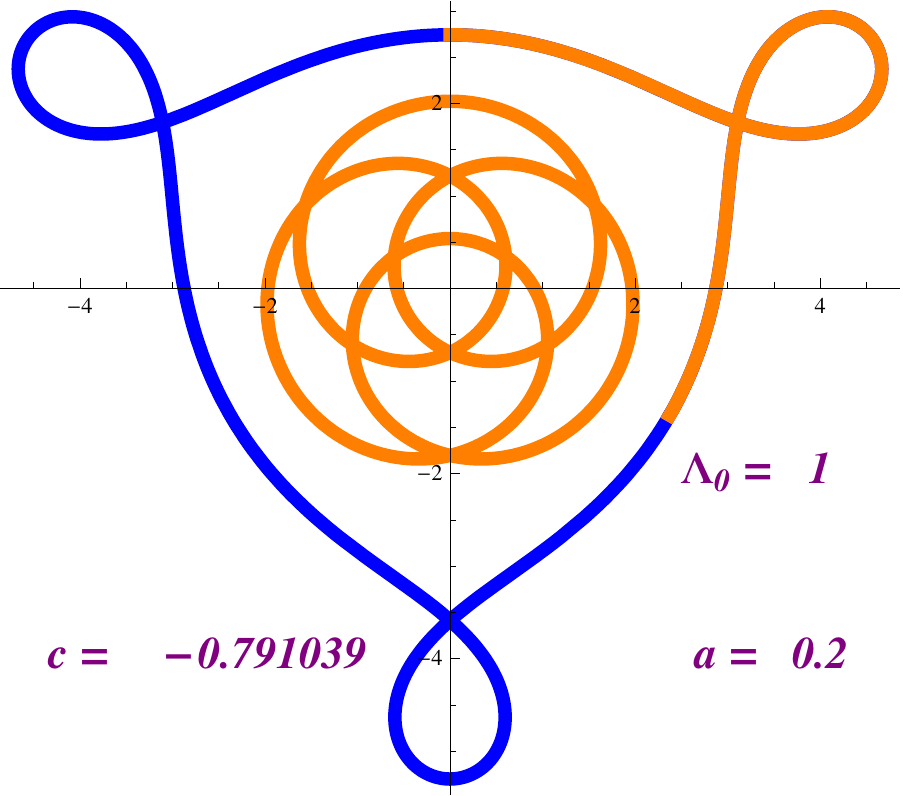}}
\caption{ The first picture shows the graph of the function $\Delta{\tilde\theta}(C,0.2,x_3,x_4)$. Three points in this graph have been highlighted, these point correspond to the surfaces on the second, third and fourth picture. These graphs are disconnected due to the fact that the level set $G=C$ is disconnected. The pair of surfaces correspond to those explained in case (ii) from Proposition \ref{conf space b=1}. We can see how the variation of the angle of the profile curve is the same for both surfaces as predicted by Proposition \ref{complex variable prop}.}
\label{tg}
\end{figure}
\FloatBarrier


\eject

\section{Stability of cylinders}
In this section, we will consider the stability of the equilibrium surfaces described above for both free and fixed boundary value problem. Let $\Sigma_h$ be a part of an equilibrium surface of the form $\alpha\times [-h/2, h/2]$ where $\alpha$ denotes the generating curve. The curve $\alpha$ will always be assumed to be closed. The surface  $\Sigma_h$ arises as a critical point for the functional (\ref{E1}).

First, we regards the surface $\Sigma_h$ for a free boundary problem where the supporting surface consists of two horizontal planes.
 The first variation of this functional  for a variation $\delta X=\psi \nu +T$ is (\cite{PP}),  
$$ \int_{\Sigma_h} (-2H-\frac{a}{2}R^2+\Lambda_0)\psi\:d\Sigma 
 +\oint_{\partial \Sigma_h}  dX\times (\nu-\frac{a}{2} \nabla'\frac{R^4}{16}+\Lambda_0 \nabla '\frac{R^2}{4})\cdot \delta X\:.$$
 Here, $\nabla'$ denotes the gradient operator in ${\bf R}^3$.  By taking first compactly supported variations, one deduces immediately that in the interior of the surface 
$-2H-\frac{a}{2}R^2+\Lambda_0\equiv 0$ must hold. If the surface is constrained to have free boundary components in the planes $x_3\equiv \pm h/2$, then for the surface to be in equilibrium, the boundary integral must vanish for all admissible variation, that is all $\delta X$ with $\delta X\cdot E_3\equiv 0$ on the boundary. This amounts to the condition
$$ E_3\:||\:\alpha'\times (\nu-\frac{a}{2} \nabla'\frac{R^4}{16}+\Lambda_0 \nabla '\frac{R^2}{4})\:.$$
However, $\alpha'$, $\nabla'R^2$ and $\nabla'R^4$ are all co-planar vectors so their cross products are already vertical. We conclude that $\alpha'\times \nu$ must be vertical along the boundary and so the surface meets both the horizontal planes in a right angle. This condition holds automatically for a cylindrical surface and so these surfaces are in equilibrium for the free boundary problem.

The second variation of energy for the free boundary problem is
\begin{equation}
\label{svfb}
\delta^2{\mathcal E}=-\int_\Sigma \psi L\psi\:d\Sigma +\oint_{\partial \Sigma} \psi \partial_n \psi\:ds=\int_\Sigma |\nabla\psi|^2-(|d\nu|^2+aq)\psi^2\:d\Sigma \end{equation}
Here $L=\Delta+(|d\nu|^2+a(q-x_2\nu_3))$, $q=x_1\nu_1+x_2\nu_2+x_3\nu_3$, and the admissible functions $\psi$ need only be smooth and satisfy
\begin{equation}
\label{mv0}\int_\Sigma \psi\:d\Sigma =0\:.\end{equation} The second variation formula (\ref{svfb}) can be derived in a manner similar to that used in \cite{KP}.
Therefore  stability for the free boundary value problem can be characterized by
\begin{equation}
\label{stab} \inf \frac{ -\int_\Sigma \psi L[\psi]\:d\Sigma +\oint_{\partial \Sigma} \psi \partial_n \psi\:ds }{\int_\Sigma \psi^2\:d\Sigma}\ge 0\:,\end{equation}
where the infimum is taken over smooth functions on $\Sigma$ satisfying (\ref{mv0}).
This gives rise to a spectral problem on the space of smooth functions satisfying (\ref{mv0}):
\begin{equation}
\label{evp}  (L+\lambda_j)[\psi] -\frac{1}{|\Sigma |}\int_\Sigma \psi =0\:{\rm in}\:\Sigma\:, \quad \partial_n\psi=0\:, {\rm on}\:\partial \Sigma \:.\end{equation}

Our first result concerns the general solution of the free boundary problem.
\begin{prop}
\label{A}
When $a>0$ holds, the free boundary problem has no stable, embedded critical points.
\end{prop}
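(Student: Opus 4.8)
The plan is to prove instability by exhibiting, for every embedded equilibrium cylinder $\Sigma_h=\alpha\times[-h/2,h/2]$ with $a>0$, a smooth admissible function $\psi$ — one with $\int_{\Sigma_h}\psi\,d\Sigma=0$ — for which the second variation in (\ref{svfb}) is strictly negative; by the characterization (\ref{stab}) this forces $\Sigma_h$ to be unstable. The function I would use is the normal part of a horizontal translation, $\psi=\nu_1:=\langle E_1,\nu\rangle$ (equally $\nu_2$ works). On a cylinder the unit normal is $\nu=(\sin\theta,-\cos\theta,0)$, so $\nu_1=\sin\theta$ is a function of the arclength $s$ alone. Since $\sin\theta=x_2'$ and $\alpha$ is closed, $\int_{\Sigma_h}\nu_1\,d\Sigma=h\int_\alpha x_2'\,ds=0$, so $\nu_1$ satisfies the constraint (\ref{mv0}); and because $\nu_1$ does not depend on the height variable, $\partial_n\nu_1\equiv0$ on the two boundary circles, so the boundary integral in (\ref{svfb}) drops out and $\delta^2\mathcal E(\nu_1)=-\int_{\Sigma_h}\nu_1\,L\nu_1\,d\Sigma$.

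The heart of the matter is the clean identity $L\nu_1=a\,x_1$, which records precisely the failure of horizontal translations to be symmetries (it is the $R^2$ term in (\ref{E1}) that breaks this invariance). To get it, differentiate $\nu_1=\sin\theta$ using (\ref{dtheta}): from $\theta'=-\Lambda_0+\frac{a}{2}R^2$ and $(R^2)'=2(x_1x_1'+x_2x_2')=2\xi_1$ one obtains $\theta''=a\xi_1$, hence $\Delta\nu_1=\nu_1''=a\xi_1\cos\theta-(\theta')^2\sin\theta$. On a cylinder $|d\nu|^2=(\theta')^2$ and $q=x_1\nu_1+x_2\nu_2=x_1\sin\theta-x_2\cos\theta=\xi_2$, so
\[
L\nu_1=\Delta\nu_1+\bigl(|d\nu|^2+aq\bigr)\nu_1=a\bigl(\xi_1\cos\theta+\xi_2\sin\theta\bigr)=a\,x_1,
\]
the last step being the inversion of (\ref{TS coordinates}). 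Hence
\[
\delta^2\mathcal E(\nu_1)=-a\int_{\Sigma_h}x_1\nu_1\,d\Sigma=-ah\int_\alpha x_1\,x_2'\,ds=-ah\oint_\alpha x_1\,dx_2=-a\,h\,A,
\]
where $A:=\oint_\alpha x_1\,dx_2$ is the algebraic area enclosed by $\alpha$.

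Embeddedness now finishes the proof. If $\Sigma_h$ is embedded, then $\alpha$ is a simple closed curve, and, as the normal $\nu$ in (\ref{svfb}) is the exterior normal of the solid region bounded by $\Sigma_h$, the curve $\alpha$ is positively oriented, so $A>0$. With $a>0$ this gives $\delta^2\mathcal E(\nu_1)=-ahA<0$; and $\nu_1\not\equiv0$ (were $\nu_1\equiv0$, $\theta$ would be constant and $\alpha$ a straight line rather than a closed curve), so the quotient in (\ref{stab}) is negative and $\Sigma_h$ is not stable. The argument is uniform over all embedded equilibria, including the round cylinders of Proposition~\ref{round} — for those $\nu_1$ is just a constant multiple of $x_1|_{\Sigma_h}$.

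The step I expect to require the most care is the orientation bookkeeping in the preceding paragraph: one must check that the unit normal with respect to which $\Sigma_h$ satisfies (\ref{EL}) for the prescribed positive $a$ is the outward normal of the drop, so that the enclosed area $A$ is genuinely positive and not merely nonzero — equivalently, that $H$ in (\ref{EL}) is to be read as the mean curvature computed with the exterior normal. Once that is settled, everything else is the short direct computation above, using only (\ref{dtheta}) and (\ref{TS coordinates}).
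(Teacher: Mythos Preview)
Your proof is correct and follows essentially the same approach as the paper: both use the test function $\nu_1$, the identity $L[\nu_1]=ax_1$, and the positivity of the enclosed volume coming from embeddedness. The only cosmetic differences are that the paper phrases the final step via the three-dimensional Divergence Theorem (obtaining $\int_\Sigma x_1\nu_1\,d\Sigma=V(\Sigma)$ directly) rather than reducing to Green's theorem on $\alpha$, and the paper's statement is not explicitly restricted to cylinders---but the argument is the same.
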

\begin{proof} 

The proof uses an idea of R. Lopez  \cite{RL} which was applied in the case of closed drops.

The functions $\nu_i$, $i=1,2$ satisfy the equation $L[\nu_i]=ax_i$. Using, say $\nu_1$, in the second variation formula, yields
$$\delta^2{\mathcal E}=-a\int_\Sigma x_1\nu_1\:d\Sigma\:.$$
By applying the Divergence Theorem to the field $x_1E_1$, this becomes
$$\delta^2{\mathcal E}=-aV(\Sigma)<0\:.$$
Also, note that the variation fields $E_i$, $i=1,2$ are admissible since they are tangent to the horizontal plane and thus satisfy
$$\int_\Sigma \nu_i\:d\Sigma =\int_\Omega \nabla \cdot E_i\:d^3x=0\:,$$
where $\Omega$ is the three dimensional region enclosed by $\Sigma$ and any two
\end{proof} 
\begin{rem} The previous result also holds in the immersed case if it assumed that the signed volume enclosed by the surface is positive.\end{rem}

Let $\Sigma$ be an equilibrium cylinder over a curve $\alpha$, i.e. $\Sigma=\alpha \times [-h/2, h/2]$. We assume $\alpha$ is parameterized by arc length $s$. For $\Sigma$, $2H=\kappa:=$
the curvature of $\alpha$. The second variation formula (\ref{svfb})  becomes
\begin{equation}
\label{sv}
\delta^2{\mathcal E}=\int_\alpha\int_{-h/2}^{h/2} \psi_s^2+\psi_z^2-(\kappa^2+a\xi_2)\psi^2 \:dz\:ds\:.\end{equation}

We analyze (\ref{evp}) using separation of variables. Writing $\psi=u(s)f(z)$, we see that (\ref{evp}) becomes
\begin{equation}
\label{one}u(s)f_{zz}+f(z)u_{ss}+(\kappa^2(s)+a\xi_2(s)+\lambda)u(s)f(z)=A\:, f_z(\pm h/2)=0\:.\end{equation}
for a suitable constant $A$.
In addition, we must have
$$\int_{-h/2}^{h/2} f(z)\:dz=0\:,{\rm or}\: \int_\alpha u(s)\:ds=0\:.$$
Dividing (\ref{one}) by $uf$, we arrive at
$$ \frac{f_{zz}(z)}{f(z)}=-\frac{u_{ss}+(\kappa^2(s)+a\xi_2(s)+\lambda)u(s)}{u(s)}+\frac{A}{f(z)u(s)}\:.$$
The right hand side must be independent of $s$, so by varying $z$ in $f$ in the denominator, we get a contradiction unless $A=0$ or $f\equiv$ constant. In either case,
both sides are equal to a constant $:=-\mu$. If $f$ is non constant, and hence $A =0$, the smallest value of $\mu$ consistent with the boundary condition $f_z=0$ is $\mu=(\pi/h)^2$, when
$f=\sin(\pi z/h)$. Renaming $\lambda$ as $\lambda+\mu$, we have that $u$ satisfies the equation
\begin{equation}
\label{two}
u_{ss}+(\kappa^2(s)+a\xi_2(s)+\lambda)u(s)=0\:.
\end{equation}
\begin{lem}
\label{sl0}
 The function $\xi_1$ satisfies the equation
\begin{equation}
 \label{sl}
 (\xi_1)_{ss}+(\kappa^2+a\xi_2)\xi_1=0\:.\:\end{equation}\end{lem}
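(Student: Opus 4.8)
The plan is to verify (\ref{sl}) by a direct computation from the first-order system we have already derived for $\xi_1,\xi_2$. Recall from (\ref{dz}) that $\xi_1'=1+\kappa\xi_2$ and $\xi_2'=-\kappa\xi_1$, and that combining $\theta'=-\kappa$ with (\ref{dtheta}) gives the pointwise identity $\kappa=\Lambda_0-\frac a2(\xi_1^2+\xi_2^2)$.

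First I would differentiate $\xi_1'=1+\kappa\xi_2$ once more, using $\xi_2'=-\kappa\xi_1$, to obtain
\[
(\xi_1)_{ss}=\kappa'\xi_2+\kappa\xi_2'=\kappa'\xi_2-\kappa^2\xi_1,
\]
so that $(\xi_1)_{ss}+\kappa^2\xi_1=\kappa'\xi_2$. It therefore remains only to show $\kappa'=-a\xi_1$. Differentiating $\kappa=\Lambda_0-\frac a2(\xi_1^2+\xi_2^2)$ and substituting $\xi_1'=1+\kappa\xi_2$ and $\xi_2'=-\kappa\xi_1$, the terms containing $\kappa$ cancel and one is left with
\[
\kappa'=-a(\xi_1\xi_1'+\xi_2\xi_2')=-a\xi_1.
\]
Plugging this back in yields $(\xi_1)_{ss}+\kappa^2\xi_1=-a\xi_1\xi_2$, which is exactly (\ref{sl}).

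There is essentially no obstacle here beyond keeping careful track of the signs in (\ref{dz}) and in $\theta'=-\kappa$; the whole verification is a two-line calculation. It is nonetheless worth recording the conceptual reason it works: the equation $2H=\Lambda_0-\frac a2 R^2$ is invariant under rotation about the vertical axis, and for the cylinder $\Sigma=\alpha\times[-h/2,h/2]$ the normal component of the associated rotational Killing field $(-y,x,0)$ equals $-\xi_1$ and is independent of the vertical variable. Since the Jacobi operator of $\Sigma$ acts on vertically constant functions as $u\mapsto u_{ss}+(\kappa^2+a\xi_2)u$ (cf. (\ref{sv}) and (\ref{two})), this symmetry forces $\xi_1$ into its kernel, which is precisely (\ref{sl}). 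This also explains why $\xi_1$, rather than some other expression in the profile curve, is the natural comparison function for the Sturm--Liouville problem (\ref{two}) in the stability analysis that follows.
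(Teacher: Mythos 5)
Your computation is correct and is essentially the paper's own proof: both differentiate $\xi_1'=1+\kappa\xi_2$ once more and use $\kappa_s=-a\xi_1$ (which the paper gets from $\kappa=\Lambda_0-aR^2/2$ together with $\xi_1=(R^2)_s/2$, and which you get equivalently from the system (\ref{the ode})). Your closing remark identifying $\xi_1$ with the normal component of the rotational Killing field is a pleasant conceptual gloss consistent with the operator in (\ref{sv}) and (\ref{two}), but it is not needed for, and does not alter, the two-line verification.
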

\begin{proof} The function $\xi_1=(R^2)_s/2$ satisfies $(\xi_1)_s=\kappa \xi_2+1$.   Recall that $\kappa=2H=\Lambda_0-aR^2/2$, so
$(\xi_1)_{ss}=\kappa_s\xi_2+\kappa(\xi_2)_s=(\Lambda_0-R^2/2)_s\xi_1-\kappa^2\xi_1=-a\xi_2\xi_1-\kappa^2\xi_1$.
Therefore (\ref{sl}) holds.\end{proof}

\begin{lem}
Assume that $\alpha$ is not a circle. The function $\xi_1$ satisfies (\ref{two}) with $\lambda=0$ and, in addition satisfies
\begin{equation}
\label{mv} 
\int_\alpha \xi_1\:ds=0\:.
\end{equation}
Hence, the lowest eigenvalue of (\ref{two}) on $\alpha$,  without the  constraint that the mean value be zero, is negative.
\end{lem}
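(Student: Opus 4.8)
The plan is to read off the first assertion directly from Lemma~\ref{sl0}, establish the mean-value identity by an elementary integration, and then deduce the sign of the bottom eigenvalue from the classical spectral theory of the periodic one-dimensional eigenvalue problem together with the fact that $\xi_1$ changes sign.

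For the first part there is nothing to do: equation (\ref{two}) with $\lambda=0$ is exactly equation (\ref{sl}), so $\xi_1$ solves (\ref{two}) for this value of $\lambda$. For (\ref{mv}) I would use that $\xi_1 = x\cos\theta+y\sin\theta = xx'+yy' = \tfrac12(R^2)_s$, so that $\int_\alpha \xi_1\,ds = \tfrac12\int_\alpha (R^2)_s\,ds = 0$ since $\alpha$ is closed and $R^2$ is therefore periodic in arc length. I would also record that $\xi_1\not\equiv 0$: if it vanished identically, then (\ref{dz}) gives $\kappa\xi_2\equiv -1$ and $(\xi_2)_s\equiv -\kappa\xi_1\equiv 0$, so $\xi_2$ and hence $\kappa$ are constant, forcing $\alpha$ to be a circle, contrary to hypothesis.

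For the last sentence I would invoke the variational characterization of the lowest eigenvalue $\lambda_0$ of (\ref{two}) on the closed curve $\alpha$,
\[ \lambda_0 \;=\; \inf\left\{ \frac{\int_\alpha \big(u_s^2-(\kappa^2+a\xi_2)u^2\big)\,ds}{\int_\alpha u^2\,ds} \;:\; u\not\equiv 0 \right\}\:. \]
Taking $u=\xi_1$ and integrating by parts using (\ref{sl}) (no boundary terms, since $\alpha$ is closed) makes the numerator vanish, so $\lambda_0\le 0$. To get strict inequality I would use the standard fact that on a circle the lowest eigenvalue of a Schr\"odinger-type operator $-\partial_{ss}-V$ is simple and admits an eigenfunction of constant sign; since $\xi_1$ is a nonzero eigenfunction for the eigenvalue $0$ but has zero mean, and hence changes sign, $\lambda_0=0$ is impossible, and therefore $\lambda_0<0$.

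The only point requiring care — and thus the main obstacle — is precisely this last step, ruling out $\lambda_0=0$. The sign-change argument above does it; equivalently, if $0$ were the bottom eigenvalue then $\xi_1$ would minimize the Rayleigh quotient, so $|\xi_1|$ would too, whence $|\xi_1|$ solves (\ref{two}) with $\lambda=0$ and is $\ge 0$, forcing $|\xi_1|>0$ by the strong maximum principle and contradicting (\ref{mv}) together with $\xi_1\not\equiv 0$.
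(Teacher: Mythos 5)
Your proof is correct and follows essentially the same route as the paper: the equation comes from Lemma~\ref{sl0}, the mean-value identity from $\xi_1=\tfrac12(R^2)_s$, and the negativity of the bottom eigenvalue from the fact that $\xi_1$ is a sign-changing eigenfunction for the eigenvalue $0$. The only difference is that you spell out the last step (ruling out $\lambda_0=0$ via the constant sign of the ground state), which the paper leaves implicit; that is a worthwhile addition but not a different argument.
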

\begin{proof} The fact that $\xi_1$ satisfies the equation follows from the previous lemma.
Since $\xi_1=(R^2/2)_s$, (\ref{mv}) holds. Also, $\xi_1\equiv 0$ is and only if $R^2\equiv$ constant so $\xi_1\equiv 0$ if and only if $\alpha$ is a circle.\end{proof}

\begin{thm} 
\label{CP}
Let $\Sigma_h=\alpha\times [-h/2, h/2]$ be an equilibrium surface.
With regard to the free boundary problem, we have
\begin{enumerate}

\item If the function $R^2$ has more than two critical points on $\alpha$, then $\Sigma$ is unstable for all $h$.
\item $\Sigma_h$ is unstable for $h>>0$.
\end{enumerate}
\end{thm}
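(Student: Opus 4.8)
The plan is to use the separated eigenvalue problem \eqref{two} together with the test function $\xi_1$ provided by the preceding lemmas. Recall that stability for the free boundary problem requires \eqref{stab} to be nonnegative over all smooth $\psi$ with $\int_\Sigma\psi\,d\Sigma=0$. After separation of variables we saw that admissible perturbations split into two types: those with $f=\sin(\pi z/h)$ (which shift $\lambda$ by $\mu=(\pi/h)^2$ and impose \emph{no} mean-value constraint on $u$), and those with $f\equiv$ constant (which impose $\int_\alpha u\,ds=0$ but cost nothing in the $z$-direction). So the key objects are the eigenvalues of the one-dimensional operator $u\mapsto -u_{ss}-(\kappa^2+a\xi_2)u$ on the closed curve $\alpha$, both with and without the mean-zero constraint.

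For part (1), the idea is that $R^2$ having more than two critical points forces an extra sign change of $\xi_1=(R^2/2)_s$, hence an extra zero, hence $\xi_1$ cannot be the first eigenfunction on the closed curve. First I would note, via Lemma~\ref{sl0}, that $\xi_1$ solves \eqref{two} with $\lambda=0$; since $\alpha$ is closed and $\xi_1$ is periodic, $0$ is an eigenvalue of the unconstrained problem on $\alpha$ with eigenfunction $\xi_1$. Critical points of $R^2$ on $\alpha$ are exactly the zeros of $\xi_1$. A first eigenfunction on a circle (closed curve) cannot vanish, and the standard Sturm oscillation count on $S^1$ shows that the number of zeros of an eigenfunction grows with the eigenvalue index; having at least four zeros (more than two critical points, counted with the periodicity, gives an even number $\ge 4$) places $\xi_1$ at eigenvalue index $\ge 2$, so the lowest eigenvalue $\lambda_1(\alpha)$ of the unconstrained problem is strictly negative and in fact $\lambda_1<0$ with at least one more negative eigenvalue, or at minimum $\lambda_1<0=\lambda_{\text{something}\ge 2}$. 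Then take $\psi=u_1(s)\cdot 1$ where $u_1$ is the corresponding eigenfunction: to make it admissible I subtract its mean, $\psi=u_1-\bar u_1$; the quadratic form on such $\psi$ is still negative provided $\lambda_1$ is negative enough relative to the mean part. Cleanly: on the constrained (mean-zero in $s$) problem, the existence of \emph{two} eigenvalues $\le 0$ of the unconstrained problem (one of which, $\lambda=0$, has eigenfunction $\xi_1$ already orthogonal to constants by \eqref{mv}, and the bottom one $\lambda_1<0$) guarantees a mean-zero combination with negative quadratic form, and since the $z$-part contributes nothing for $f\equiv$ const, $\delta^2\mathcal E<0$ for every $h$. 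The cleanest route is: $\xi_1$ itself is admissible (mean zero by \eqref{mv}, and we may take $f\equiv$ const), and the quadratic form on $\xi_1$ equals $\int_\alpha\int (\xi_1)_s^2-(\kappa^2+a\xi_2)\xi_1^2\,dz\,ds = h\int_\alpha (\xi_1)_s^2-(\kappa^2+a\xi_2)\xi_1^2\,ds$; integrating by parts with \eqref{sl} this is $-h\int_\alpha \xi_1\big((\xi_1)_{ss}+(\kappa^2+a\xi_2)\xi_1\big)\,ds=0$. So $\xi_1$ is a mean-zero null direction; to get strict negativity I perturb: since $\xi_1$ has $\ge 4$ zeros it is not the ground state, so there is a function $w$ on $\alpha$ with $\int_\alpha w(\kappa^2+a\xi_2)$-weighted... more directly, the unconstrained bottom eigenvalue $\lambda_1$ satisfies $\lambda_1<0$ because $\xi_1$ (with its zeros) sits above the ground state; let $u_1$ be the ground state and form $\psi = (u_1-\bar u_1)$, check $\delta^2\mathcal E = h\big(\lambda_1\int u_1^2 + (\text{correction from }\bar u_1)\big)<0$ since the correction is controlled and $\lambda_1<0$.

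For part (2), the point is that large $h$ makes the $z$-direction cheap and the constant-in-$z$ mode dominant. I would simply use $\psi=\xi_1(s)$ (constant in $z$), which as computed above has $\delta^2\mathcal E=0$, and then improve it to a strictly negative direction for $h$ large by adding a small mean-zero bump, or alternatively use $\psi=u_1(s)$ with $u_1$ the unconstrained ground state of \eqref{two}, rendered admissible by subtracting its $\Sigma$-mean; because the $f\equiv$ const mode carries no $(\pi/h)^2$ penalty while the mean-value constraint \eqref{mv0} can be satisfied in $s$ alone, for $h$ large we are essentially testing the unconstrained one-dimensional problem, whose bottom eigenvalue we must show is negative. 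When $\alpha$ is not a circle, the lemma already tells us the unconstrained lowest eigenvalue on $\alpha$ is negative (eigenfunction $\xi_1$ has a zero, so the true ground state has lower eigenvalue). Thus $\psi=u_1(s)-\overline{u_1}$ gives, for $h$ sufficiently large, $\delta^2\mathcal E = h\int_\alpha(u_1)_s^2-(\kappa^2+a\xi_2)u_1^2\,ds + O(1) = h\,\lambda_1\!\int_\alpha u_1^2\,ds + O(1) < 0$, the $O(1)$ being the finite correction from the constant subtraction which does not scale with $h$. When $\alpha$ \emph{is} a circle, $\kappa$ and $\xi_2$ are constants and one checks the constant function $\psi\equiv 1$ modulo mean value — i.e. use $\psi=\cos(2\pi s/L)$ or similar — to still produce instability for $h\gg0$, or note that the round-cylinder case is handled by the explicit computation that $\kappa^2+a\xi_2$ can be arranged positive, making $\psi$ const (minus mean in $z$) with $f=$ const... here I would instead invoke the long-wave instability directly.

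\textbf{Main obstacle.} The delicate point is the bookkeeping of the two mean-value constraints \eqref{mv0}: a perturbation of the form $u(s)f(z)$ is admissible if \emph{either} $\int_z f=0$ \emph{or} $\int_s u=0$, and I must make sure the negative test direction genuinely satisfies one of them while not paying the $(\pi/h)^2$ cost unless I can afford it. For part (1) the elegant fact that $\xi_1$ is both admissible (by \eqref{mv}) and a null direction means the real work is showing $\xi_1$ is \emph{not} the ground state when $R^2$ has $>2$ critical points — this is a Sturm-type oscillation argument on the circle that needs the periodic boundary conditions handled correctly (an eigenfunction of the periodic problem with $2k$ zeros sits at index $\ge 2k-1$ or so). For part (2) the obstacle is the circle case, where $\xi_1\equiv0$ and the previous lemma gives no information; there one must produce the instability by a genuinely $h$-dependent long-wavelength argument, testing with a function constant in $z$ but non-constant (and mean-zero) in $s$, using that $\int_\alpha(\kappa^2+a\xi_2)\,ds>0$ when $\kappa\ne 0$ so that the lowest constrained eigenvalue of $-u_{ss}-(\kappa^2+a\xi_2)u$ on the circle is negative — which for constant coefficients is immediate since the constraint $\int u=0$ only removes the zero mode and the next mode $u=\cos(2\pi s/L)$ has Rayleigh quotient $(2\pi/L)^2-(\kappa^2+a\xi_2)$, negative once $L$ is large or, for fixed $L$, still handled because we may also use non-minimal modes; combined with the free $z$-direction for $h$ large this yields $\delta^2\mathcal E<0$.
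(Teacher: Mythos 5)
Your ingredients are the same as the paper's: $\xi_1=(R^2)_s/2$ solves the Jacobi-type equation \eqref{sl}, its zeros are the critical points of $R^2$, and Sturm oscillation forces negative eigenvalues of the unconstrained periodic problem below the eigenvalue $0$ carried by $\xi_1$. But the execution of both parts has genuine gaps. For part (1), your handling of the mean-value constraint fails. The periodic ground state $u_1$ of $\hat L$ on $\alpha$ has no zeros, hence $\int_\alpha u_1\,ds\ne 0$, so the only mean-zero functions in ${\rm span}\{u_1,\xi_1\}$ are the multiples of $\xi_1$ --- on which, as you computed yourself, the quadratic form is exactly zero, not negative. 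Subtracting the mean from $u_1$ does not help either: $Q(u_1-\bar u_1)=\mu_1\int u_1^2+2\bar u_1\int(\kappa^2+a\xi_2)u_1-\bar u_1^2\int(\kappa^2+a\xi_2)$, and the correction terms are not ``controlled'' --- they scale exactly like the leading term and can have either sign. What is needed, and what the paper does, is to observe that four zeros of $\xi_1$ place its eigenvalue $0$ high enough in the spectrum that there are at least two (the paper claims four, citing the zero-counting theorem of Coddington--Levinson) \emph{strictly} negative eigenvalues; the span of the corresponding eigenfunctions is a subspace of dimension at least two on which $Q$ is negative \emph{definite}, and the single linear condition $\int_\Sigma\psi=0$ leaves a nontrivial subspace of it. Your hedge ``$\lambda_1<0$ with at least one more negative eigenvalue, or at minimum $\lambda_1<0=\lambda_{\ge 2}$'' is precisely the point you cannot leave open: in the second alternative your construction produces nothing negative.

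For part (2) you have the mechanism backwards. The mode $f\equiv{\rm const}$ does \emph{not} give access to the unconstrained one-dimensional problem; it forces $\int_\alpha u\,ds=0$. It is the mode $f=\sin(\pi z/h)$ that removes the constraint on $u$ (the volume constraint \eqref{mv0} is absorbed by $\int_{-h/2}^{h/2}\sin(\pi z/h)\,dz=0$) at a cost of $(\pi/h)^2$, which vanishes as $h\to\infty$. The paper's test function is $u_1(s)\sin(\pi z/h)$ with $u_1$ the unconstrained ground state of \eqref{two}, giving $\delta^2\mathcal E\propto \mu_1+\pi^2/h^2<0$ for $h$ large. Your $z$-constant test function $u_1-\bar u_1$ admits no $O(h)$ versus $O(1)$ separation --- every term carries the same factor of $h$ --- so the claimed inequality does not follow. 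Finally, your effort to cover the case where $\alpha$ is a circle is misdirected: the claim $\int_\alpha(\kappa^2+a\xi_2)\,ds>0$ is false in general (for a round cylinder the integrand is $1/R^2+aR$, which is $\le 0$ exactly in the cases the paper later shows are stable for \emph{all} $h$), and indeed part (2) cannot hold for circles; the paper's own proof explicitly assumes $\alpha$ is not a circle.
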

\begin{proof} Since $\xi_1:=(R^2)_s/2$, the eigenfunction $\xi_1$ vanishes at more than two points. From the local behavior if eigenfunctions for Sturm-Liouville problems, $\xi_1$ has a sign change at each of its zeros and so $\xi_1^{-1}(\{0\})$ must consist of at least four points. Let $\mu_1<\mu_2<\mu_3<..$ denote the distinct eigenvalues of 
$$({\hat L}+\mu)[u(s)]:=u_{ss}+(\kappa^2+a\xi_2+\mu)u=0\:,$$
on $\alpha$ considered {\it without} the integral constraint that the mean value of $u(s)$ be zero. It is known, ( \cite{CL}, Theorem 3.1), that the number of zeros of an eigenfunction of $\mu_j$ is exactly $j-1$. Since $\xi_1$ has at least four zeros, we must have $\mu_i<0$ for $i=1,\dots,4$. 

The eigenfunction for $\mu_1, ... ,\mu_4$ therefore span a space of functions on $\Sigma_h$ having vanishing normal derivative on $\partial \Sigma_h$ on which the quadratic form
$$f\mapsto -\int_{\Sigma_h}fL[f]\:d\Sigma\:.$$
is negative definite. In this space, there is a subspace having dimension at least three, of functions satisfying the additional linear condition
$$\int_{\Sigma_h} f\:d\Sigma =0\:.$$
This proves (i).

In any case, if $\alpha$ is not a circle, then $R^2$ has at least two distinct critical points, so $\mu_1<0$ holds. If $u_1(s)$ is an eigenfunction belonging to $\mu_1$, then the function
$f:=u_1(s)\sin(\pi z/h)$ has vanishing normal derivative on $\partial \Sigma_h$, has zero mean value, and satisfies
$$ -\int_{\Sigma_h}fL[f]\:d\Sigma=\frac{\pi^2}{h^2}+\mu_1\:,$$
which is negative for $h>>0$. 

We remark that the first curve shown in Figure  \ref{embg1} is an example of a curve for which $R^2$ has exactly two critical points. However, in this case  $\Sigma_h$ is unstable for all $h$ by  Proposition (\ref{A}), since $a>0$ holds. 
\end{proof} 
\begin{cor}
If $\alpha$ is embedded and non circular, then $\alpha\times [-h/2,h/2]$ is unstable for the free boundary problem for all $h$.
\end{cor}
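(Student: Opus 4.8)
The plan is to deduce the Corollary from Theorem~\ref{CP}(1) by showing that for an embedded non-circular equilibrium the function $R^2$ must have at least four critical points on $\alpha$. The bridge is the classical four-vertex theorem applied to $\alpha$, combined with the Euler--Lagrange equation; once this connection is noticed the argument is essentially immediate, so I regard the recognition of that link as the only real step.

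First I would observe that $a\neq 0$. Indeed $\alpha$ is closed, and if $a=0$ the equation (\ref{EL}) reduces to $2H\equiv\Lambda_0$, i.e.\ the curvature $\kappa$ is constant; the only closed planar curve of constant curvature is a circle, contradicting the hypothesis. Next, differentiating $\kappa=2H=\Lambda_0-\tfrac{a}{2}R^2$ along $\alpha$ gives $\kappa'=-\tfrac{a}{2}(R^2)'=-a\,\xi_1$, so since $a\neq 0$ a point of $\alpha$ is a critical point of the curvature $\kappa$ if and only if it is a critical point of $R^2$ (equivalently a zero of $\xi_1$).

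Now, because $\Sigma_h=\alpha\times[-h/2,h/2]$ is embedded, $\alpha$ is a simple closed curve, so the four-vertex theorem guarantees that $\kappa$ has at least four vertices, i.e.\ at least four critical points. By the previous step $R^2$ then has at least four critical points on $\alpha$, in particular more than two, and Theorem~\ref{CP}(1) gives that $\Sigma_h$ is unstable for the free boundary problem for every $h$. (If $\kappa$ were locally constant on some arc, so that $\alpha$ contains a circular arc, then $R^2$ is locally constant there and has infinitely many critical points, which is still more than two, so the conclusion is unaffected; one should also note that the version of the four-vertex theorem being used holds for arbitrary simple closed $C^{2}$ curves, not only convex ones, since for $a>0$ the curve $\alpha$ need not be convex.)

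The tempting alternative would be to invoke Proposition~\ref{A} instead, by trying to prove the signed volume satisfies $aV>0$. Using $\oint_\alpha(\xi_1)'\,ds=0$ together with $\oint_\alpha\xi_2\,ds=2\,\mathrm{Area}(\alpha)$ and the equilibrium equation one gets a clean identity relating $a$, $\Lambda_0$, the area and the length, which settles the matter at once when $\Lambda_0\ge 0$; but the opposite-sign situation (for instance a Case~II profile curve traversed clockwise in the orientation in which $\Lambda_0=1$) seems to need isoperimetric-type estimates and does not close uniformly. For that reason the four-vertex route, which is uniform in all cases, is the one I would commit to, keeping Proposition~\ref{A} only as a remark.
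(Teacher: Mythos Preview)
Your proof is correct and follows essentially the same approach as the paper: invoke the Four Vertex Theorem on the embedded closed curve $\alpha$ to obtain at least four critical points of $\kappa$, use the equilibrium relation $\kappa=\Lambda_0-\tfrac{a}{2}R^2$ to translate these into critical points of $R^2$, and then apply Theorem~\ref{CP}(1). Your version is in fact slightly more careful than the paper's, since you explicitly verify $a\neq 0$ (needed for the implication $\kappa'=0\Rightarrow (R^2)'=0$) and address the non-convex case of the Four Vertex Theorem.
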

\begin{proof} The Four Vertex Theorem implies that $\kappa_s=0$ at, at least, four points on $\alpha$. Since $\kappa=\Lambda_0=aR^2/2$, $R^2_s$ must vanish at these points and the result then follows from the previous theorem. \end{proof}
We define the Morse index of $\Sigma$ to be the number of negative eigenvalues for the problem
$$(L+\lambda)\psi={\rm constant}\: {\rm in }\:\Sigma, \quad \partial_n\psi=0\:, {\rm on}\:\partial \Sigma\:, \int_\Sigma \psi=0\:.$$
By linear algebra,  a lower bound for this index is $J-1$, where $J$ is the number of negative eigenvalues for the same problem {\it without} the condition that the mean value of
the functions vanish. The  proof of Theorem \ref{CP} can be modified to show that if the function $R^2$ has at least $J$ critical points on $\alpha$, then the number of negative eigenvalues 
without the integral constraint is at least $J$ and hence the Morse index is at least $J-1$. 
In the first part of the paper, we explained that for $\alpha$ to be immersed,  $\Delta {\tilde \theta}/2\pi$ is a rational number $=:m/k$ in lowest terms. Then the number of fundamental
pieces which make up $\alpha$ is either $k$ or $2k$ depending, respectively, on whether $m$ is even or odd. Each fundamental piece contributes at least two critical points of $\xi_1$ which correspond to the maxima and minima of $R^2$. In this way, we obtain a simple lower bound for the Morse Index of the free boundary problem.

For a round cylinder of radius $R$ and height $h$, (\ref{svc} ) becomes
\begin{equation}
\label{svc}
\inf_{\int_0^{2\pi}\int_{-h/2}^{h/2}\psi\:R d\theta dz=0} \frac{\int_0^{2\pi}\int_{-h/2}^{h/2} (\psi_z^2+\frac{\psi_\theta^2}{R^2}-(1/R^2 +aR)\psi^2)R\:dz d\theta}
{\int_0^{2\pi}\int_{-h/2}^{h/2}\psi^2 \:R d\theta dz}\:.\end{equation}

Clearly this is non negative if $aR+1/R^2\le 0$ holds and so round cylinders satisfying this inequality are stable for the free boundary problem for every $h>0$.

The only interesting cases are then when $a<0$ and $aR+1/R^2>0$ hold. In this case, the function minimizing (\ref{svc}) is $\sin (\pi z/h)$ which yields that the cylinder is stable for the
free boundary problem if and only if
$$\frac{\pi^2}{h^2}\ge aR+1/R^2\:,$$
holds. To summarize, we have
\begin{thm}
The only stable embedded cylinders for the free boundary problem are round cylinders of radius $R$ and height $h$ which satisfy $a<0$, $aR+1/R^2>0$ and $\pi^2/h^2 \ge aR+1/R^2$.
\end{thm}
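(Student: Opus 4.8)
The plan is to combine the reduction results already proved in this section with a direct spectral decomposition of the round cylinder. First I would dispose of the non-round case: if $\Sigma_h=\alpha\times[-h/2,h/2]$ is embedded then $\alpha$ is a simple closed planar curve, so by the Corollary following Theorem \ref{CP} (an application of the Four Vertex Theorem) $\Sigma_h$ is unstable for every $h$ unless $\alpha$ is a circle. Moreover, by Proposition \ref{A}, the case $a>0$ admits no stable embedded critical points at all. Since we are working with $a\ne 0$, a stable embedded equilibrium cylinder therefore has $a<0$ and $\alpha$ a circle, and this circle must be centered on the rotation axis, for otherwise $R$ would be non-constant on $\Sigma_h$ while $H$ is constant, contradicting (\ref{EL}) when $a\ne 0$. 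So the problem reduces to deciding, for a round cylinder of radius $R$ and height $h$ centered on the axis with $a<0$, exactly when it is stable.

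For that, I would take the second variation in the form (\ref{svc}) and expand an admissible function in a Fourier series in $\theta$, $\psi=\sum_{n\ge 0}(a_n(z)\cos n\theta+b_n(z)\sin n\theta)$. Since the constraint $\int_\Sigma\psi=0$ involves only the $n=0$ term, the numerator quadratic form is block diagonal: the $n=0$ block is $\int_{-h/2}^{h/2}((a_0')^2-(1/R^2+aR)a_0^2)\,dz$ minimized subject to $\int a_0=0$, while for $n\ge 1$ each block is $\int_{-h/2}^{h/2}((a_n')^2+\tfrac{n^2-1}{R^2}a_n^2-aR\,a_n^2)\,dz$ with no integral constraint (and similarly for $b_n$). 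Stability is equivalent to every one of these blocks being $\ge 0$, so I would analyze them separately. For $n\ge 1$, the infimum of $\int(a_n')^2/\int a_n^2$ is $0$, attained by constants, so the condition is $(n^2-1)/R^2\ge aR$, which holds for all $n\ge 1$ since $a<0$; the mode $n=1$ is precisely the one that would force $a\le 0$, matching Proposition \ref{A}. For $n=0$, the infimum of $\int(a_0')^2/\int a_0^2$ over mean-zero functions on $[-h/2,h/2]$ is the first nonzero Neumann eigenvalue $\pi^2/h^2$, attained at $\sin(\pi z/h)$, so the $n=0$ block is $\ge 0$ iff $\pi^2/h^2\ge 1/R^2+aR$.

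Putting these together, a round cylinder of radius $R$ with $a<0$ is stable precisely when $\pi^2/h^2\ge aR+1/R^2$; this is automatic for every $h$ when $aR+1/R^2\le 0$, and otherwise is exactly the height restriction in the statement. Combined with the first paragraph, this yields the asserted classification. The step that requires genuine care is the mode-by-mode minimization: identifying $\pi^2/h^2$ as the sharp constant in the mean-value-zero Neumann Rayleigh quotient, and checking that the unconstrained $n\ge 1$ modes are harmless once $a<0$. A secondary point is verifying that the potential entering (\ref{svc}) is indeed $1/R^2+aR$ with the sign conventions of Section 4, so that everything is consistent with Proposition \ref{A}; the rest is bookkeeping and assembly of results already established.
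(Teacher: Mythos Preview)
Your proposal is correct and follows essentially the same strategy as the paper: eliminate non-circular embedded profiles via the Four Vertex Theorem corollary, eliminate $a>0$ via Proposition \ref{A}, and then analyze the Rayleigh quotient (\ref{svc}) on the round cylinder. The only difference is one of detail: where the paper simply asserts that the minimizer in (\ref{svc}) is $\sin(\pi z/h)$, you supply the justification by decomposing into Fourier modes in $\theta$, observing that the $n\ge 1$ blocks carry no integral constraint and are nonnegative once $a<0$, so the whole infimum is governed by the $n=0$ block and equals $\pi^2/h^2-(aR+1/R^2)$. This is the natural way to verify the paper's assertion, so the two arguments are really the same, yours being the fleshed-out version.
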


The cylindrical surface  $\Sigma_h$ is an equilibrium surface if and only if the generating curve $\alpha$ is a critical point for the functional
\begin{equation}
\label{curv} {\mathcal G}_{(a,\Lambda_0)}[\alpha]={\mathcal L}[\alpha]-\frac{a}{2}\int_{U \cap{ \bf R}^2} R^2\:d{\mathfrak A}+\Lambda_0 {\mathfrak A}\:.\end{equation}
Here ${\mathcal L}[\alpha]$ denotes the length of $\alpha$ and ${\mathfrak A}$ is the area enclosed by $\alpha$.  (This area should be considered as a signed area when $\alpha$ is not embedded.) The admissible variations of the curve are those which preserve the enclosed area. With regard to the stability of $\Sigma_h$, there are three possibilities:
\begin{itemize}
\item $\alpha$ is strongly stable for ${\mathcal G}_{(a,\Lambda_0)}$, i.e. the second variation of ${\mathcal G}_{(a,\Lambda_0)}$ is non negative for all variations of $\alpha$. In this case, $\Sigma_h$ is stable for all $h>0$. This  case only occurs for certain circles with particular choices of $a$.
\item The curve $\alpha$ is stable for ${\mathcal G}_{(a,\Lambda_0)}$ but not strongly stable. In this case $\Sigma_h$ will be stable if and only if $h$ is sufficiently small.
\item  $\alpha$ is unstable for ${\mathcal G}_{(a,\Lambda_0)}$. In this case $\Sigma_h$ is unstable for all values of $h$. This is the case if either $a>0$ holds or if $\alpha$
is embedded and is not a circle.
\end{itemize}

Note that for round cylinders satisfying  $a<0$, $aR+1/R^2>0$ snd $\pi^2/h^2 \ge aR+1/R^2$, stability will be lost if the radius $R$ is decreased sufficiently while keeping the height fixed.
It can be shown using results of Crandall and Rabinowitz, \cite{CR1971}, that a bifurcation occurs at the value of $R=R_0$ such that $\pi^2/h^2 = aR_0+1/R_0^2$ and there 
is a family of equilibrium solutions of the free boundary problem of the form  $X(\sigma)=([R_0+\sigma \sin(\pi z/h)]\cos(\theta), [R_0+\sigma \sin(\pi z/h)]\sin(\theta), z) +{\mathcal O}(\sigma^2)$.  In particular, there exist non cylindrical, embedded solutions of the free boundary problem.

We now discuss the stability of cylinders for the fixed boundary problem. Stability is still characterized by the non negativity of (\ref{stab}) but now the competing functions
must vanish on the boundary.  This means that stability for the fixed boundary problem is monotone; any subdomain of a stable domain is itself stable, a property which does not hold for the free boundary problem. Also,  for sufficiently small $h$, $\alpha\times [-h/2, h/2]$ will always be stable for the fixed boundary problem.

 Proposition \ref{A} does not apply.
By choosing $\psi=\sin(2\pi z/h)$, we obtain that $\alpha\times [-h/2, h/2]$ is stable for the fixed boundary value problem only if
$$\frac{4\pi^2}{h^2}\ge \int_C\kappa^2\:ds+2a{\mathfrak A}\ge \frac{\pi^2}{{\mathcal L}^2}+2a\mathfrak{A}\:$$
holds where ${\mathfrak A}$ is the area enclosed by $\alpha$ and $L$ is the length of $\alpha$. Because the total curvature of an embedded curve is at least $2\pi$, in this case the previous
necessary condition for stability can be improved to
$$\frac{4\pi^2}{h^2}\ge \frac{4\pi^2}{{\mathcal L}^2}+2a\mathfrak{A}\:$$
There is nothing, however, to insure that the right hand side is positive. Indeed, for some round cylinders the quantity is negative. Our next result will give an upper bound on the height of any stable equilibrium cylindrical surface which is not a round cylinder. In particular, this upper bound only depends on the geometry of the generating curve $\alpha$.
\begin{thm} Let $\alpha \times  [-h/2, h/2]$ be any stable cylindrical equilibrium surface for the fixed boundary problem which is not a round cylinder. Then there holds
\begin{equation}
\label{CCC}
\frac{4e^{4\bar \xi_1}}{(\max (G_{\xi_2})^2)}\le\frac{e^{ 2({\bar \xi_1}-{\underline{\xi}_1)}}  {\mathcal L} [\alpha]}{\oint_\alpha (1+\kappa \xi_2)^2\:ds}\le \frac{4\pi^2}{h^2}\:.
\end{equation}
\label{inq}
Here ${\bar \xi}_1$ and ${\underline \xi}_1$ are, respectively the maximum and minimum of $\xi_1$ on the generating curve.
\end{thm}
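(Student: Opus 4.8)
The inequality on the right of (\ref{CCC}) carries all the stability content; the one on the left is then an elementary analytic consequence. I would prove them in that order, the weighted test function of Step 2 being the crux.

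\textbf{Step 1: reduce stability to a one-dimensional Rayleigh quotient on $\alpha$.} Since $\Sigma_h=\alpha\times[-h/2,h/2]$ is stable for the fixed boundary problem, the second variation (\ref{sv}) is nonnegative for every $\psi$ vanishing on the circles $z=\pm h/2$ with $\int_{\Sigma_h}\psi\,d\Sigma=0$. I would test with $\psi=u(s)\sin(2\pi z/h)$: this vanishes at $z=\pm h/2$ and $\int_{-h/2}^{h/2}\sin(2\pi z/h)\,dz=0$, so it is admissible for \emph{every} smooth $u$ on $\alpha$. Carrying out the $z$-integration in (\ref{sv}) and dividing by $h/2$ turns stability into $\oint_\alpha u_s^2-(\kappa^2+a\xi_2)u^2\,ds+\tfrac{4\pi^2}{h^2}\oint_\alpha u^2\,ds\ge0$ for all $u$, i.e.
$$\frac{4\pi^2}{h^2}\ \ge\ \sup_{u}\frac{\oint_\alpha(\kappa^2+a\xi_2)u^2-u_s^2\,ds}{\oint_\alpha u^2\,ds}\ =:\ -\mu_1(\alpha),$$
where $\mu_1(\alpha)$ is the bottom eigenvalue of the Jacobi operator $\widehat L=\partial_s^2+(\kappa^2+a\xi_2)$ on the closed curve $\alpha$. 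By Lemma \ref{sl0} we have $\widehat L\xi_1=0$, and since $\alpha$ is not a circle $\xi_1$ changes sign (it has mean value zero on $\alpha$); hence $\mu_1(\alpha)<0$ and the supremum above is strictly positive. It remains to exhibit one admissible $u$ whose Rayleigh quotient is at least the middle member of (\ref{CCC}).

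\textbf{Step 2: a weighted test function — the crux.} The obvious choice $u=\xi_1$ is worthless because its Rayleigh quotient is $0$; indeed every $u=\xi_1\phi$ gives $\oint_\alpha(\kappa^2+a\xi_2)u^2-u_s^2\,ds=-\oint_\alpha\xi_1^2\phi_s^2\,ds\le0$. One therefore needs a weight that never vanishes, and I would take $u=e^{c\xi_1}$, keeping $c$ free. Using the structure relations $(\xi_1)_s=1+\kappa\xi_2$, $\kappa_s=-a\xi_1$, $(\xi_1)_{ss}=-(\kappa^2+a\xi_2)\xi_1$ and integrating $\tfrac{d}{ds}\big((\xi_1)_s\,e^{2c\xi_1}\big)$ around the closed curve $\alpha$, one obtains the identity
$$\oint_\alpha(\kappa^2+a\xi_2)\,\xi_1\,e^{2c\xi_1}\,ds\ =\ 2c\oint_\alpha(1+\kappa\xi_2)^2\,e^{2c\xi_1}\,ds,$$
which I would use to recast the numerator $\oint_\alpha\big[(\kappa^2+a\xi_2)-c^2(1+\kappa\xi_2)^2\big]e^{2c\xi_1}\,ds$ of the Rayleigh quotient into a more tractable form. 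Then the crude two-sided bounds $e^{2\underline\xi_1}\le e^{2\xi_1}\le e^{2\bar\xi_1}$ and $\oint_\alpha e^{2\xi_1}\,ds\le e^{2\bar\xi_1}\mathcal{L}[\alpha]$, applied separately to numerator and denominator, introduce exactly the ratio $e^{2(\bar\xi_1-\underline\xi_1)}$, and with the optimal choice of $c$ the estimate should collapse to
$$\frac{4\pi^2}{h^2}\ \ge\ -\mu_1(\alpha)\ \ge\ \frac{e^{2(\bar\xi_1-\underline\xi_1)}\,\mathcal{L}[\alpha]}{\oint_\alpha(1+\kappa\xi_2)^2\,ds}.$$
The main obstacle is organizing this chain of elementary estimates so that the leftover integrals produce exactly this expression rather than merely a bound of the same type; in particular, choosing $c$ and handling the part of the numerator not captured by the kernel identity above is where the real work lies.

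\textbf{Step 3: the left-hand inequality.} From the Hamiltonian form (\ref{the ode}), $G_{\xi_2}=2(1+\kappa\xi_2)$, so $(\max G_{\xi_2})^2=4\big(\max_\alpha(1+\kappa\xi_2)\big)^2$ and the leftmost member of (\ref{CCC}) equals $e^{4\bar\xi_1}\big/\big(\max_\alpha(1+\kappa\xi_2)\big)^2$. The remaining inequality reduces to the elementary bound
$$e^{4\bar\xi_1}\oint_\alpha(1+\kappa\xi_2)^2\,ds\ \le\ e^{2(\bar\xi_1-\underline\xi_1)}\,\mathcal{L}[\alpha]\,\big(\max_\alpha(1+\kappa\xi_2)\big)^2,$$
which I would obtain from $\oint_\alpha(1+\kappa\xi_2)^2\,ds\le\mathcal{L}[\alpha]\max_\alpha(1+\kappa\xi_2)^2$ together with the two-sided control $e^{2\underline\xi_1}\le e^{2\xi_1}\le e^{2\bar\xi_1}$ of the weight along $\alpha$. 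This step is straightforward once Step 2 is in hand, and chaining the two inequalities gives (\ref{CCC}).
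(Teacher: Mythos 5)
Your overall strategy is the same as the paper's: separate variables with $\sin(2\pi z/h)$, use an exponential of the Jacobi field $\xi_1$ as the test function on $\alpha$, estimate crudely with $\bar\xi_1$, $\underline\xi_1$, and convert to $G_{\xi_2}$ for the left inequality. But your Step 2 --- which you yourself identify as the crux --- is not actually carried out, and the device you propose does not close it. The identity you obtain by integrating $\tfrac{d}{ds}\bigl((\xi_1)_s e^{2c\xi_1}\bigr)$ around $\alpha$ is correct, but it controls $\oint(\kappa^2+a\xi_2)\,\xi_1\,e^{2c\xi_1}\,ds$, whereas the Rayleigh numerator contains $\oint(\kappa^2+a\xi_2)\,e^{2c\xi_1}\,ds$ with no factor of $\xi_1$; since $\xi_1$ changes sign on $\alpha$, the leftover term $\oint(\kappa^2+a\xi_2)(1-\xi_1)e^{2c\xi_1}\,ds$ has no sign and is not absorbed by any choice of $c$. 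The paper's route is different in exactly this respect: it works pointwise rather than with an integrated identity, computing ${\hat L}[e^{\xi_1}]$ directly from ${\hat L}[\xi_1]=0$ so that for $u=e^{\xi_1}$ the quadratic form collapses to $\oint u_s^2-(\kappa^2+a\xi_2)u^2\,ds=-\oint e^{\xi_1}{\hat L}[e^{\xi_1}]\,ds = -\oint e^{2\xi_1}(1+\kappa\xi_2)^2\,ds$, after which the sup/inf bounds give the middle member in one line. Without that pointwise computation (or an equivalent substitute) your asserted conclusion ``the estimate should collapse to'' the stated bound is an unproved claim, not a proof.

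A second, smaller gap is in Step 3. The two-sided control $e^{2\underline\xi_1}\le e^{2\xi_1}\le e^{2\bar\xi_1}$ together with $\oint(1+\kappa\xi_2)^2\,ds\le {\mathcal L}[\alpha]\max(1+\kappa\xi_2)^2$ only yields the leftmost inequality of (\ref{CCC}) if $e^{4\bar\xi_1}\le e^{2(\bar\xi_1-\underline\xi_1)}$, i.e.\ if $\underline\xi_1\le -\bar\xi_1$. The paper supplies this by invoking $\underline\xi_1=-\bar\xi_1$, which comes from the symmetry of the level set $\{G=C\}$ about the $\xi_2$-axis (the TreadmillSled of the closed generating curve covers a full symmetric component, so $\xi_1$ attains values symmetric about $0$). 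You need to state and use this fact; the weight bounds alone do not produce it.
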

\begin{proof} From Lemma \ref{sl0}, we have that $\xi_1(s)$ satisfies ${\mathcal L}[\xi_1]=0$. From the we easily obtain
\begin{equation}
\label{DD}{\hat  L}[e^{\xi_1}]=e^{\xi_1}({\hat L}[\xi_1]+((\xi_1)_s)^2)=e^{\xi_1} ((\xi_1)_s)^2=e^{\xi_1} (1+\kappa \xi_2)^2\:.\end{equation}
We take $\psi:=\sin(2\pi z/h)e^{\xi_1}$ in (\ref{sv}). Note that this function vanishes on the boundary and integrates to zero. Integrating first with respect to $h$, we obtain
\begin{eqnarray*}
\delta^2{\mathcal E}&=&\frac{h}{2}\oint_\alpha \frac{4\pi^2}{h^2}u^2+u_s^2-(\kappa^2(s)+a\xi_2(s))u^2\:ds\\
&=&\frac{h}{2}\oint_\alpha \frac{4\pi^2}{h^2}e^{2\xi_1}-e^{\xi_1}{\hat L}[e^{\xi_1}]\:ds\\
&=&\frac{h}{2}\oint_\alpha \frac{4\pi^2}{h^2}e^{2\xi_1}-e^{2\xi_1}(1+\kappa \xi_2)^2\:ds\\
&\le& \frac{h}{2}\bigl(\frac{4\pi^2}{h^2}e^{2{\bar \xi}_1} {\mathcal L}[\alpha]- e^{2{\underline \xi}_1}\oint_\alpha (1+\kappa \xi_2)^2\:ds  \bigr)\:.
\end{eqnarray*}
This proves the second inequality in (\ref{inq}).  
From (\ref{dz}) and (\ref{the ode}), we have
$$(1/2)G_{\xi_2}=\xi_1'=1+\kappa \xi_2\:.$$
So estimating the integral in the denominator by the maximum of the integrand times the length of the curve and using that ${\underline \xi_1}=-{\bar \xi_1}$ gives the result.
 \end{proof} 

A slight modification, replacing $\sin(2\pi z/h)$ with $\sin(\pi z/h)$, if the proof gives the following.
\begin{thm} Let $\alpha \times  [-h/2, h/2]$ be any stable cylindrical equilibrium surface for the free boundary problem which is not a round cylinder. Then there holds
\begin{equation}
\label{CCC}
\frac{4e^{4\bar \xi_1}}{(\max (G_{\xi_2})^2)}\le\frac{e^{ 2({\bar \xi_1}-{\underline{\xi}_1)}}   {\mathcal L} [\alpha]}{\oint_\alpha (1+\kappa \xi_2)^2\:ds}\le \frac{\pi^2}{h^2}\:.
\end{equation}
Here ${\bar \xi}_1$ and ${\underline \xi}_1$ are, respectively, the maximum and minimum of $\xi_1$ on the generating curve.
\end{thm}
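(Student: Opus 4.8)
The plan is to run the proof of Theorem~\ref{inq} essentially unchanged, the single modification being that the test function $\sin(2\pi z/h)\,e^{\xi_1(s)}$ used there for the fixed boundary problem is replaced by
$$\psi:=\sin(\pi z/h)\,e^{\xi_1(s)}.$$
This is exactly what the free boundary problem asks for: the functions admissible in (\ref{stab}) need not vanish on $\partial\Sigma$, but only have vanishing normal derivative there together with zero total mean, and one verifies immediately that $\psi_z=(\pi/h)\cos(\pi z/h)\,e^{\xi_1}$ vanishes at $z=\pm h/2$ while $\int_{-h/2}^{h/2}\sin(\pi z/h)\,dz=0$, so $\psi$ is admissible for the free problem (it would not be for the fixed one).

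The computation then proceeds as in Theorem~\ref{inq}. First, Lemma~\ref{sl0} gives $\hat L[\xi_1]:=(\xi_1)_{ss}+(\kappa^2+a\xi_2)\xi_1=0$, so, using $(\xi_1)_s=1+\kappa\xi_2$ from (\ref{dz}), the identity (\ref{DD}) reads $\hat L[e^{\xi_1}]=e^{\xi_1}\bigl(\hat L[\xi_1]+((\xi_1)_s)^2\bigr)=e^{\xi_1}(1+\kappa\xi_2)^2$. Substituting $\psi$ into the cylindrical second variation (\ref{sv}), performing the $z$-integration first (note $\int_{-h/2}^{h/2}\sin^2(\pi z/h)\,dz=\int_{-h/2}^{h/2}\cos^2(\pi z/h)\,dz=h/2$), and then integrating by parts on the closed curve $\alpha$, yields
$$\delta^2{\mathcal E}=\frac{h}{2}\oint_\alpha \frac{\pi^2}{h^2}\,e^{2\xi_1}-e^{\xi_1}\hat L[e^{\xi_1}]\,ds=\frac{h}{2}\oint_\alpha \frac{\pi^2}{h^2}\,e^{2\xi_1}-e^{2\xi_1}(1+\kappa\xi_2)^2\,ds.$$
The only difference from Theorem~\ref{inq} is that the constant $4\pi^2/h^2$ there is now $\pi^2/h^2$, coming from $\sin(\pi z/h)$ rather than $\sin(2\pi z/h)$.

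To conclude, stability forces $\delta^2{\mathcal E}\ge 0$. Bounding $e^{2\xi_1}$ from above by $e^{2\bar\xi_1}$ in the first term and from below by $e^{2\underline\xi_1}$ in the second gives
$$\frac{\pi^2}{h^2}\,e^{2\bar\xi_1}\,{\mathcal L}[\alpha]\ge e^{2\underline\xi_1}\oint_\alpha(1+\kappa\xi_2)^2\,ds,$$
which, after the same rearrangement as in Theorem~\ref{inq}, yields the right-hand inequality of the theorem. For the left-hand inequality, (\ref{dz}) and (\ref{the ode}) give $\tfrac12 G_{\xi_2}=1+\kappa\xi_2$, whence $\oint_\alpha(1+\kappa\xi_2)^2\,ds=\tfrac14\oint_\alpha G_{\xi_2}^2\,ds\le\tfrac14(\max G_{\xi_2})^2\,{\mathcal L}[\alpha]$, and using $\underline\xi_1=-\bar\xi_1$ produces the stated bound exactly as before.

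There is essentially no new obstacle here, since all the analytic content already lies in the identity $\hat L[e^{\xi_1}]=e^{\xi_1}(1+\kappa\xi_2)^2$ and in the spectral set-up developed earlier in this section. The two points that do require attention are: (i) checking that $\psi$ satisfies the Neumann-type, zero-mean admissibility conditions appropriate to the \emph{free} boundary problem, which is precisely why $\sin(\pi z/h)$ must be used in place of $\sin(2\pi z/h)$; and (ii) the hypothesis that $\alpha$ is not a round circle, which is needed because for a circle one has $1+\kappa\xi_2\equiv 0$, so the right-hand side of (\ref{sv}) degenerates after the substitution and the denominators in the asserted inequality vanish, whereas away from that case $\xi_1\not\equiv 0$ and every quantity involved is finite and positive.
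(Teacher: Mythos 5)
Your proposal is correct and is exactly the argument the paper intends: the paper's own proof of this theorem consists of the remark that one repeats the proof of the fixed-boundary version with $\sin(2\pi z/h)$ replaced by $\sin(\pi z/h)$, and you have carried out precisely that substitution, including the (correct) verification that the new test function satisfies the Neumann and zero-mean admissibility conditions of the free boundary problem.
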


For the case of round cylinders, there are two distinct ways in which stability may fail. Using $\psi_1=\sin(2\pi z/h)$ in (\ref{svc}) yields the necessary condition
\begin{equation}
\label{c1}
\frac{4\pi^2}{h^2}\ge aR+1/R^2\:
\end{equation}
for stability.While using $\psi_2:=\cos(\pi z/h)$ yields the necessary condition for stability
\begin{equation}
\label{c2}
\frac{\pi^2}{h^2}\ge aR\:.
\end{equation}

If $R^3\ge 1/(3a)$, then as $h$ increases (\ref{c2}) will fail before (\ref{c1}), i.e. stability is lost through a non axially symmetric variation. In this case, one can show,  again using 
results of Crandall and Rabinowitz \cite{CR1971} that a bifurcation occurs which breaks the rotational symmetry and there exists equilibrium surfaces bounded by two co-axial circles which
are not rotationally symmetric.


\begin{thebibliography}{1}
\bibitem{BS} Brown, R. A.; Scriven, L. E.\emph{ The shapes and stability of captive rotating drops.} Philos. Trans. Roy. Soc. London Ser. A 297 (1980), no. 1429, 51-79.
\bibitem{Ch} Chandrasekhar, S. \emph{The stability of a rotating liquid drop}. Proc. Roy. Soc. Ser. A 286 1965 1-26.
\bibitem{CL} Coddington, E.A. and Levinson, N. Theory of Ordinary Differential Equations, Krieger Publishing Co., Malabar (1984).
\bibitem{CR1971}
M. G. Crandall and P. H. Rabinowitz, \emph{Bifurcation from simple eigenvalues},
J. Func. Analysis
\textbf{54} (1971), 321--340.
\bibitem{MH} Hynd, Ryan; McCuan, John
\emph{On toroidal rotating drops.} 
Pacific J. Math. 224 (2006), no. 2, 279-289.
\bibitem{KP} Koiso, M. and Palmer, B., \emph{
Stability of anisotropic capillary surfaces between parallel planes}.
Calculus of Variations and Partial Differential Equations, 2006, Vol. 25, no. 3, 275-298.
\bibitem{RL}   L\'{o}pez, Rafael, \emph{Stationary rotating surfaces in Euclidean space}, Calculus of Variations in Partial Differential Equations, 39, numbers 3-4, (2010), 333-359. 
\bibitem{PP} Palmer, B. and Perdomo, O. \emph{Rotating surfaces with helicoidal symmetry}, in preparation.
\bibitem{P1} Perdomo, O. \emph{A dynamical interpretation of cmc Twizzlers surfaces}, Pacific J. Math. {\bf 258} (March 2012) No. 2, 459-585.

\bibitem{P2} Perdomo, O. \emph{Embedded constant mean curvature hypersurfaces of spheres}, Asian J. Math. {\bf 14} (March 2010) No. 1, 73-108.

\bibitem{P3} Perdomo, O. \emph{Hellicoidal minimal surfaces in $R^3$}, To appear in the Ilinois J. of Math.
\bibitem{S} Solonnikov, V. A., \emph{ On linear stability and instability of equilibrium figures of uniformly rotating liquid.} Recent advances in elliptic and parabolic problems, 231- 257, World Sci. Publ., Hackensack, NJ, 2005.
\end{thebibliography}
\end{document}